\newtheorem{theorem}{Theorem}[section]
\newtheorem{lemma}[theorem]{Lemma}
\newtheorem{proposition}[theorem]{Proposition}
\newtheorem{corollary}[theorem]{Corollary}
\theoremstyle{definition}
\newtheorem{definition}[theorem]{Definition}
\theoremstyle{remark}
\newtheorem{remark}[theorem]{Remark}
\numberwithin{equation}{section}
\newcommand{\C}{{\mathbb{C}}}
\newcommand{\Z}{{\mathbb{Z}}}
\newcommand{\R}{{\mathbb{R}}}
\newcommand{\algt}{\mathfrak{t}}
\begin{document}

\title{An Orlik-Raymond type classification of simply connected six-dimensional torus manifolds with vanishing odd degree cohomology}
\date{\today}

%\author[Y.\ Karshon]{Yael Karshon}
\author[S.\ Kuroki]{Shintar\^o Kuroki}
\dedicatory{Dedicated to Professor Mikiya Masuda on his 60th birthday.}
\address{
The University of Tokyo 
3-8-1 Komaba Meguro-ku Tokyo 153-8914, JAPAN  \\
              Tel.: +81-3-5465-7001\\
              Fax: +81-3-5465-7011\\}
              \email{kuroki@ms.u-tokyo.ac.jp}     
\thanks{The author was partially supported by 
Grant-in-Aid for Scientific Research (S)
24224002, Japan Society for Promotion of Science, and 
the JSPS Strategic Young Researcher Overseas Visits Program for Accelerating Brain Circulation
"Deepening and Evolution of Mathematics and Physics, Building of International Network Hub based on OCAMI".}

\subjclass[2000]{Principal: 57S25, Secondly:94C15}

\keywords{Torus manifold \and Torus graph \and GKM graph \and Equivariant connected sum}

\maketitle

%\tableofcontents

\begin{abstract}
The aim of this paper is to classify simply connected $6$-dimensional torus manifolds with vanishing odd degree cohomology.
It is shown that there is a one-to-one correspondence between equivariant diffeomorphism types of these manifolds 
and $3$-valent labelled graphs, called torus graphs introduced by Maeda-Masuda-Panov.
Using this correspondence and combinatorial arguments, we prove that 
a simply connected $6$-dimensional torus manifold with $H^{odd}(M)=0$ is equivariantly diffeomorphic to 
the $6$-dimensional sphere $S^{6}$ or an equivariant connected sum of copies of 
$6$-dimensional quasitoric manifolds or $S^{4}$-bundles over $S^{2}$.
\end{abstract}

\section{Introduction}
\label{intro}
Let $M$ be a $2n$-dimensional closed, connected, oriented manifold with an effective $n$-dimensional (i.e., half-dimensional) torus $T^{n}$-action.
We call $M$ (or $(M,T)$) a {\it torus manifold} if $M^{T}\not=\emptyset$ (see \cite{HaMa}), 
where $M^{T}$ is the set of fixed points.
A toric manifold (i.e., a non-singular, complete toric variety viewed as a complex analytic space) with the restricted $T^{n}$-action
is a typical example of torus manifolds. 
Recall that a toric manifold is a complex $(\C^{*})^{n}$-manifold with the dense orbit (see \cite{Od}, \cite{Fu}), and $T^{n}$ is the maximal compact subgroup of $(\C^{*})^{n}$.
A fundamental result of toric geometry tells us that there is 
a one-to-one correspondence between toric manifolds and combinatorial objects called fans. 
Thus, topological (more precisely, geometric) invariants of toric manifolds can be described in terms of combinatorial invarinats of fans,
such as equivariant cohomology rings, equivariant characteristic classes and other topological invariants.

In 2003, Hattori-Masuda introduced a torus manifold as the topological generalization of a toric manifold in \cite{HaMa}.
They also introduced the combinatorial objects, called multi-fans (see \cite{Ma99}, \cite{HaMa}), and computed   
topological invariants (such as equivariant characteristic classes or Todd genus for unitary torus manifolds) in terms of multi-fans.
However, unlike toric geometry, a multi-fan does not contain enough information to 
determine some topological invariants of torus manifolds (e.g. equivariant cohomology).
So, in 2007, Maeda-Masuda-Panov introduced another combinatorial objects, called torus graphs motivated by GKM graphs introduced by Guillemin-Zara in \cite{GuZa}.
The combinatorial information of torus graphs can completely determine 
the equivariant cohomology rings of torus manifolds with vanishing odd degree cohomology, i.e., $H^{odd}(M;\Z)=0$ 
(in this paper, we only consider the integer coefficient), 
see \cite{MaPa}, \cite{MMP} (and also see Section \ref{sect3} in this paper about torus graphs).
However, in general, 
there is no one-to-one correspondence between torus manifolds with $H^{odd}(M)=0$ and torus graphs.

So, we are naturally led to ask the following two questions:
(1) which subclasses of torus manifolds are completely determined by combinatorial objects (like multi-fans or torus graphs);
(2) if we find such a subclass of torus manifolds, how we can classify such torus manifolds.
Several mathematicians have answered to the 1st question; for example, 
Davis-Januszkiewicz \cite{DaJa} for the subclass called {\it quasitoric manifolds} (see \cite{BuPa} or Section \ref{sect4.3} in this paper), 
Ishida-Fukukawa-Masuda \cite{IFM} for the subclass called {\it topological toric manifolds},
and Wiemeler for the class of simply connected $6$-dimensional torus manifolds with $H^{odd}(M)=0$   
in \cite{Wi2} (see Theorem \ref{Wi2}).
The aim of this paper is to answer to the 2nd question 
for the class of simply connected $6$-dimensional torus manifolds with $H^{odd}(M)=0$ by using torus graphs.

Let us briefly recall the classification results for torus manifolds with lower dimensions.
If $T^{1}$ acts on a compact $2$-dimensional manifold $M$, then $M$ is the $2$-dimensional sphere $S^{2}$, 
the $2$-dimensional real projectiove space $\R P^{2}$ or the $2$-dimensional torus $T^{2}$.
Because $M^{T}\not=\emptyset$ and $M$ is oriented,
$M$ must be equivariantly diffeomorphic to $S^{2}$ with $T^{1}$-action (also see \cite{Ka}).
When $\dim M=4$. 
By Orlik-Raymond's theorem in \cite{OrRa}, we have the following fact:
%%%%%%%%%%%%%%%%%%%%%%%%%%%%%%%%%%%%%%%%%%%%%%%%%%%%%%%%%%%%%%%%%%%%%%%%%%%
%Theorem 1.1 (Orlik-Raymond)
%%%%%%%%%%%%%%%%%%%%%%%%%%%%%%%%%%%%%%%%%%%%%%%%%%%%%%%%%%%%%%%%%%%%%%%%%%%
\begin{theorem}[Orlik-Raymond]
\label{OR-thm}
Let $M$ be a $4$-dimensional simply connected torus manifold.
Then, $M$ is equivariantly diffeomorphic to the $4$-sphere $S^4$ or an equivariant connected sum of copies of complex projective spaces $\C P^2$, $\overline{\C P}^2$  (reversed  orientation) or Hirzebruch surfaces $H_{k}$.
\end{theorem}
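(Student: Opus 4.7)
The plan is to reduce the classification to a combinatorial problem about labelled polygons, and then induct on the number of fixed points, realising the inductive step as an equivariant connected-sum decomposition.

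First, I would analyse the orbit space $Q = M/T^{2}$. By the slice theorem the $T^{2}$-action is locally linear, so $Q$ inherits the structure of a $2$-manifold with corners: free orbits fill the interior, circle-stabilizer orbits form open edges, and fixed points appear as corners. Using that the orbit map induces a surjection $\pi_{1}(Q)/\langle\text{isotropy loops}\rangle \twoheadrightarrow \pi_{1}(M)$, simple-connectedness of $M$ together with $M^{T} \neq \emptyset$ forces $Q \cong D^{2}$. Each boundary edge carries a primitive label in $\mathrm{Hom}(S^{1}, T^{2}) \cong \Z^{2}$ recording its stabilizer circle, and smoothness at each corner is equivalent to the two adjacent labels forming a $\Z$-basis of $\Z^{2}$. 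A Davis--Januszkiewicz-style reconstruction then shows that $M$ is determined up to equivariant diffeomorphism by this labelled polygon via $M \cong (D^{2} \times T^{2})/\!\sim$, so the classification becomes that of labelled polygons up to the natural $GL_{2}(\Z)$ action.

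Next I would induct on the number $n$ of vertices. The atomic cases are $n \leq 4$: the bigon gives $S^{4}$; a triangle forces labels of the form $(e_{1}, e_{2}, -e_{1}-e_{2})$ up to $GL_{2}(\Z)$, yielding $\C P^{2}$ or $\overline{\C P}^{2}$ according to orientation; and a quadrilateral forces labels $(e_{1}, e_{2}, -e_{1}+ke_{2}, -e_{2})$, yielding the Hirzebruch surface $H_{k}$. For $n \geq 5$, at any three consecutive edges $v_{i-1}, v_{i}, v_{i+1}$ the two basis conditions at the intervening corners force $v_{i+1} = \epsilon_{i} v_{i-1} + a_{i} v_{i}$ with $\epsilon_{i} \in \{\pm 1\}$. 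If some $\epsilon_{i} = -1$, an equivariant surgery collapsing the $i$-th edge realises $M \cong M' \# \C P^{2}$ (or $M' \# \overline{\C P}^{2}$), where the polygon of $M'$ has one fewer vertex, and the induction closes.

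The main obstacle is the combinatorial lemma that for $n \geq 5$ some $\epsilon_{i}$ must equal $-1$. I expect this to follow from a winding-number argument applied to the sequence of primitive labels read cyclically around $\partial D^{2}$: the rays $\R_{\geq 0} v_{i}$ sweep out a total signed rotation in $\R^{2}$ determined by the disk orientation, and $\epsilon_{i} = +1$ throughout would force a rotation incompatible with this constraint once $n \geq 5$. The most delicate point is understanding precisely why this argument degenerates at $n = 4$: a quadrilateral can satisfy all the $\epsilon_{i} = -1$ conditions yet admit no equivariant decomposition, which is exactly how the family $\{H_{k}\}$ is forced into the list of irreducible building blocks alongside $\C P^{2}$ and $\overline{\C P}^{2}$.
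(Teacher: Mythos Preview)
The paper does not prove this theorem: it is quoted in the introduction as a classical result of Orlik--Raymond \cite{OrRa}, serving only as motivation for the paper's six-dimensional analogue (Theorem~\ref{main-1}). There is therefore no proof in the paper against which to compare your proposal.

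Your outline follows the standard strategy and is broadly sound, but the inductive step misidentifies the blow-down criterion. Collapsing the $i$-th edge makes $v_{i-1}$ and $v_{i+1}$ adjacent, so smoothness of the resulting $M'$ requires $\det(v_{i-1},v_{i+1})=\pm 1$; since $v_{i+1}=\epsilon_i v_{i-1}+a_i v_i$ gives $\det(v_{i-1},v_{i+1})=\pm a_i$, the relevant condition is $a_i=\pm 1$, not $\epsilon_i=-1$. Your final paragraph already exposes the inconsistency: you observe that Hirzebruch quadrilaterals have all $\epsilon_i=-1$ yet admit no decomposition, which directly contradicts the blow-down rule you stated two paragraphs earlier. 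The lemma actually needed is that for $n\ge 5$ one can always find an edge with $|a_i|\le 1$, and it is this statement about the $a_i$ (not about the $\epsilon_i$) that the winding-number or continued-fraction argument establishes.
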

Here, in Theorem \ref{OR-thm}, a Hirzebruch surface $H_{k}$ ($k\in \Z$) is a manifold which is defined by the projectivization of 
the complex $2$-dimensional vector bundle $\gamma^{\otimes k}\oplus\epsilon$ over $\C P^1$, 
where $\gamma$ is the tautological and $\epsilon$ is the trivial complex line bundles over $\C P^1$.

In this paper, we prove that 
an Orlik-Raymond type theorem in Theorem \ref{OR-thm} also holds for simply connected 
$6$-dimensional torus manifolds with $H^{odd}(M)=0$. %by using the torus graphs.
Before we state our main results, we introduce the result for non-simply connected torus manifolds.
As one of the consequences of Masuda-Panov's theorem (see Theorem \ref{2-equiv} in Section \ref{sect2.2}), 
we have the following proposition (also see \cite{Wi2}):
%%%%%%%%%%%%%%%%%%%%%%%%%%%%%%%%%%%%%%%%%%%%%%%%%%%%%%%%%%%%%%%%%%%%%%%%%%%
%Proposition 1.2
%%%%%%%%%%%%%%%%%%%%%%%%%%%%%%%%%%%%%%%%%%%%%%%%%%%%%%%%%%%%%%%%%%%%%%%%%%%
\begin{proposition}
\label{wiemeler}
Let $W$ be a $6$-dimensional torus manifold with $H^{odd}(W)=0$ (might not be simply connected).
Then, there are a simply connected, $6$-dimensional torus manifold $M$ with $H^{odd}(M)=0$ 
and a homology $3$-sphere $hS^{3}$ such that 
\[
W\cong M\#_{T}(hS^3\times T^3)
\]
up to equivariantly diffeomorphism.
\end{proposition}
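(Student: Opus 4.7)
The plan is to deduce Proposition~\ref{wiemeler} directly from Masuda--Panov's Theorem~\ref{2-equiv}. Starting from $W$, that theorem should provide an equivariant decomposition $W \cong M \#_T N$, where $M$ is the simply connected $6$-dimensional torus manifold sharing the torus graph of $W$ (so $H^{odd}(M)=0$), and $N$ is an auxiliary $6$-manifold on which $T^3$ acts freely. Since the equivariant connected sum is performed at a principal orbit, every piece of nontrivial isotropy data of $W$ is inherited by $M$, while $N$ only affects the free stratum.

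Because $T^3$ acts freely on $N$, there is a principal $T^3$-bundle $T^3 \hookrightarrow N \to B$ with $B = N/T^3$ a closed, connected, oriented $3$-manifold. To identify $B$, I would exploit the hypothesis $H^{odd}(W)=0$ via a Mayer--Vietoris computation on
\[
W \;=\; \bigl(M\setminus T^3\times D^3\bigr)\,\cup_{T^3\times S^2}\,\bigl(N\setminus T^3\times D^3\bigr),
\]
using the known cohomology of $M$ and of the interface $T^3\times S^2$. The expected outcome is $H_1(B;\Z)=0$, which for a closed oriented $3$-manifold is exactly the condition that $B$ be an integral homology $3$-sphere $hS^3$.

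Once $B=hS^3$ is pinned down, the principal $T^3$-bundle $N\to hS^3$ is classified by $H^2(hS^3;\Z^3)$. Since a homology $3$-sphere has $H^2=0$, this group vanishes and the bundle is trivial, giving $N\cong hS^3\times T^3$ as $T^3$-manifolds and producing the desired decomposition $W\cong M\#_T(hS^3\times T^3)$.

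The main obstacle is the Mayer--Vietoris bookkeeping in the second paragraph: the cohomologies of $M\setminus T^3\times D^3$ and $N\setminus T^3\times D^3$ differ from those of $M$ and $N$ respectively, and one must carefully track the connecting homomorphisms to isolate the contribution of $B$ and conclude that $H_1(B)$ vanishes. Once the base is identified, the triviality of the bundle and hence the final decomposition are formal.
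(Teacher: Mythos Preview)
The paper does not actually prove Proposition~\ref{wiemeler}; it only attributes the result to Masuda--Panov's Theorem~\ref{2-equiv} and refers to \cite{Wi2}. So the comparison is with the argument that theorem actually supports, and here your first step contains a genuine gap. Theorem~\ref{2-equiv} does \emph{not} hand you a splitting $W\cong M\#_T N$; it only tells you that $W$ is locally standard and that $Q=W/T$ is a face-acyclic $3$-manifold with corners. The decomposition must be read off from $Q$: face-acyclicity forces $\partial Q\cong S^2$ (as in the discussion preceding Corollary~\ref{disk}), and since $Q$ itself is acyclic, capping the boundary yields a closed integral homology $3$-sphere $hS^3$, so that $Q\cong hS^3\setminus\mathrm{int}(D^3)\cong D^3\# hS^3$. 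Because $H^2(Q;\Z^3)=0$, the locally standard torus manifold over $(Q,\lambda)$ is the canonical model, and the interior connected-sum decomposition of $Q$ then lifts to $W\cong M\#_T N$, with $M$ the simply connected torus manifold over $(D^3,\lambda)$ and $N$ a principal $T^3$-bundle over $hS^3$. This orbit-space analysis is precisely the content you are assuming away when you say the theorem ``should provide'' the decomposition.

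Once the argument is organized this way, your Mayer--Vietoris computation on $W$ is unnecessary: the base $B=N/T^3$ is already identified as $hS^3$ from the orbit-space side, and the hypothesis $H^{odd}(W)=0$ has been fully spent in invoking Theorem~\ref{2-equiv}. (If you insist on recovering $H_1(B)=0$ after the fact, it is far simpler to apply Mayer--Vietoris to $Q=D^3\# B$ than to $W$.) Your final step---trivializing the bundle via $H^2(hS^3;\Z^3)=0$---is correct and is exactly what finishes the proof.
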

Here, in Proposition \ref{wiemeler}, the product manifold 
$hS^3\times T^3$ is the product of $hS^{3}$ and the $3$-dimensional torus $T^{3}$ with the free $T^3$-action on the $2$nd factor, 
and the symbol $\#_{T}$ represents the equivariant gluing along two free orbits of $M$ and $hS^3\times T^3$.
 
Because the fundamental groups $\pi_{1}(W)$ and $\pi_{1}(hS^{3})$ are isomorphic in Proposition \ref{wiemeler},
$W$ is simply connected if and only if $hS^{3}$ is simply connected, i.e., the standard sphere.
Our main theorem is a classification of simply connected torus manifolds appeared in Proposition \ref{wiemeler}.
%%%%%%%%%%%%%%%%%%%%%%%%%%%%%%%%%%%%%%%%%%%%%%%%%%%%%%%%%%%%%%%%%%%%%%%%%%%
%Theorem 1.3 (Main theorem)
%%%%%%%%%%%%%%%%%%%%%%%%%%%%%%%%%%%%%%%%%%%%%%%%%%%%%%%%%%%%%%%%%%%%%%%%%%%
\begin{theorem}
\label{main1}
Let $M$ be a simply connected $6$-dimensional torus manifold with $H^{odd}(M)=0$.
Then, $M$ is equivariantly diffeomorphic to 
the $6$-sphere $S^6$ or obtained by an equivariant connected sum of copies of $6$-dimensional quasitoric manifolds 
or $S^4$-bundles over $S^2$ equipped with the structure of a torus manifold.
\end{theorem}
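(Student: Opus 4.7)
The plan is to translate the geometric classification into a purely combinatorial question via the torus graph correspondence, and then to perform an inductive decomposition of $3$-valent torus graphs under a combinatorial analogue of equivariant connected sum. By the Maeda--Masuda--Panov / Wiemeler correspondence (alluded to in the introduction and the upcoming Section \ref{sect3}), simply connected $6$-dimensional torus manifolds with $H^{odd}(M)=0$ are equivalent, up to equivariant diffeomorphism, to $3$-valent labelled graphs $\Gamma$ satisfying the torus graph axioms. So it suffices to show that every such $\Gamma$ is obtained from the torus graphs associated to $S^{6}$, to $6$-dimensional quasitoric manifolds, or to equivariant $S^{4}$-bundles over $S^{2}$ by iterating a combinatorial connected sum.

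First I would pin down the basic building blocks at the graph level. The torus graph of $S^{6}$ is the single edge (a two-vertex graph is too small; here $S^{6}$ has just two fixed points, but the graph for a torus manifold structure on $S^{6}$ with three fixed points is the theta-like graph), and I would verify which $3$-valent torus graphs arise from projections of simple $3$-polytopes (these give quasitoric manifolds by Theorem \ref{Wi2} type recognition and the Davis--Januszkiewicz correspondence) and which arise from projectivised rank-$2$ bundles over $S^{2}$ (these have a distinguished pair of edges corresponding to the $S^{2}$-base and produce a "prism-like" graph). At the same time I would describe the combinatorial operation $\#_{\Gamma}$ dual to equivariant connected sum along a free orbit: remove one vertex $v_{1}\in\Gamma_{1}$ and one vertex $v_{2}\in\Gamma_{2}$ whose incident edges carry compatible axial labels, and identify the three dangling edges according to a matching of their labels. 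The compatibility conditions needed for the resulting labelled graph to again satisfy the torus graph axioms amount precisely to a linear condition on the axial vectors at $v_{1}$ and $v_{2}$.

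Next I would proceed by induction on the number $|V(\Gamma)|$ of vertices. The base cases are the small graphs, which can be enumerated directly; here the computation that $H^{odd}=0$ together with $3$-valency forces the Euler characteristic $\chi(M) = |V(\Gamma)|$ to restrict the possibilities sharply. For the inductive step, given $\Gamma$ with $|V(\Gamma)|\geq 6$, I would look for a \emph{splitting triangle or splitting pair of vertices}: a configuration in which three edges of $\Gamma$ form a cycle whose axial labels are linearly compatible with being the boundary of a $3$-cell, so that cutting along this triangle and capping each side with a single new vertex produces two smaller $3$-valent torus graphs $\Gamma_{1}$, $\Gamma_{2}$ with $\Gamma = \Gamma_{1}\#_{\Gamma}\Gamma_{2}$. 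The existence of such a splitting triangle is the heart of the argument; I would seek it by analysing the faces of $\Gamma$ (in the torus graph sense, i.e.\ connected subgraphs on which a rank-$2$ subtorus acts with fixed-point set the corresponding stratum), using that in dimension $6$ every edge lies in exactly two $2$-faces and that these $2$-faces are themselves torus graphs of $4$-dimensional torus manifolds, to which Theorem \ref{OR-thm} applies.

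The main obstacle, and what separates this from a routine induction, will be the existence of a splitting triangle in a graph that is not itself already a building block. I expect that if no splitting triangle exists, then a facial analysis combined with the Orlik--Raymond classification of the $2$-faces forces $\Gamma$ to be either a simple polytope graph (hence quasitoric) or a prism over an Orlik--Raymond $4$-dimensional piece (hence an $S^{4}$-bundle over $S^{2}$, or more generally built from them). Establishing this dichotomy rigorously requires showing that an "irreducible" $3$-valent torus graph whose $2$-faces are all polygons must itself be the boundary graph of a simple $3$-polytope; this in turn rests on realising the $2$-faces as the boundary of a $3$-dimensional cell complex, which is where the simply-connected hypothesis will enter decisively, ruling out non-polytopal closed-up structures. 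Once this structural dichotomy is in place, the theorem follows by descending induction on $|V(\Gamma)|$.
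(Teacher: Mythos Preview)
Your overall strategy---translate to $3$-valent torus graphs and decompose by an inductive connected-sum argument---is correct and matches the paper's. But the combinatorial object you propose to search for is wrong. First a terminological slip: the connected sum here is at \emph{fixed points}, not along free orbits (the latter is what appears in Proposition~\ref{wiemeler}). More importantly, although you describe the graph operation correctly (remove a vertex from each side, splice the three dangling edges), its seam in $\Gamma$ consists of three pairwise \emph{nonadjacent} edges $p_iq_i$, not a $3$-cycle. So a ``splitting triangle'' is not the inverse of $\#$, and your proposed dichotomy ``no splitting triangle $\Rightarrow$ polytope graph or prism'' does not hold: the absence of $3$-cycles puts no useful constraint on the $1$-skeleton of a manifold with faces, and there is no mechanism by which Orlik--Raymond applied to the $2$-faces would force polytopality of the whole graph.

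The paper's actual combinatorial invariant is $3$-connectivity. After peeling off an $S^4$-bundle summand wherever $\Gamma_M$ has a multiple edge (Lemma~\ref{1-claim}), one is left with a simple planar $3$-valent graph. If this graph is $3$-connected, Steinitz's theorem identifies it with the $1$-skeleton of a simple $3$-polytope, hence $M$ is quasitoric (Lemma~\ref{quasi-toru}). If not, the paper shows (Corollary~\ref{key3}) that any separating pair of vertices $\{p,q\}$ must lie on a common facet $F$; cutting along this ``singular facet'' decomposes $(\Gamma,\mathcal A)$ into two strictly smaller torus graphs together with an extra $(\Gamma_S,\mathcal A_S)$ piece (Lemma~\ref{2-claim} and Figures~\ref{6-vertices}--\ref{6-vertices2}). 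Note that this is not a bare $\#^{-1}$ into two pieces: the intermediate $S^4$-bundle summand is essential for the axial functions to match. Your proposal does gesture at ``splitting pair of vertices'' in passing, and that is the right thread to pull; but the argument then has to go through Steinitz's theorem and the singular-facet analysis, not through triangles or a facial reduction to Theorem~\ref{OR-thm}.
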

This type of classification, i.e., classification by equivariant connected sum, 
may be regarded as the $6$-dimensional analogue of Orlik-Raymond's classification in Theorem \ref{OR-thm}. 
So, in this paper, we call this theorem an {\it Orlik-Raymond type classification} (also see the papers \cite{Mc} and \cite{Ku08}).

\begin{remark}
In the paper \cite{Iz}, Izmestiev proves an Orlik-Raymond type classification for some class of $3$-dimensional small covers 
(i.e., the real analogue of quasitoric manifolds, see Section \ref{sect4.2}), called a linear model.
\end{remark}

The organization of this paper is as follows.
In Section \ref{sect2}, we recall the basic facts about torus manifolds. 
In Section \ref{sect3}, we recall a torus graph.
In particular, Corollary \ref{key2} is the key fact to prove Theorem \ref{main1}.
In Section \ref{sect4}, we introduce the torus graphs of  
 $S^{6}$, quasitoric manifolds and $S^{4}$-bundles over $S^{2}$.
These torus graphs will be the basic graphs to classify simply connected $6$-dimensional torus manifolds with $H^{odd}(M)=0$. 
In Section \ref{sect5}, we introduce the ``oriented'' torus graphs and 
translate the equivariant connected sum around fixed points of torus manifolds to the connected sum around vertices of oriented torus graphs.
In Section \ref{sect6} and \ref{sect7}, we prove Theorem \ref{main1}.
The brief outline of the proof is as follows.
Due to Corollary \ref{key2}, there is a one-to-one corresponds between $6$-dimensional simply connected torus manifolds with $H^{odd}(M)=0$
and $3$-valent torus graphs.
Therefore, in order to prove Theorem \ref{main1}, 
it is enough to prove that an (oriented) torus graph can be decomposed into basic torus graphs in Section \ref{sect4} 
by the connected sum.
We prove this by using combinatorial arguments.

%%%%%%%%%%%%%%%%%%%%%%%%%%%%%%%%%%%%%%%%%%%%%%%%%%%%%%%%%%%%%%%%%%%%%%%%%%%
%section 2
%%%%%%%%%%%%%%%%%%%%%%%%%%%%%%%%%%%%%%%%%%%%%%%%%%%%%%%%%%%%%%%%%%%%%%%%%%%
\section{Orbit spaces of torus manifolds}
\label{sect2}

In this section, we recall some basic facts about torus manifolds (see \cite{Ma99} or \cite{HaMa} for details).

%%%%%%%%%%%%%%%%%%%%%%%%%%%%%%%%%%%%%%%%%%%%%%%%%%%%%%%%%%%%%%%%%%%%%%%%%%%
%section 2.1
%%%%%%%%%%%%%%%%%%%%%%%%%%%%%%%%%%%%%%%%%%%%%%%%%%%%%%%%%%%%%%%%%%%%%%%%%%%
\subsection{Torus manifolds}
\label{sect2.1}

A $2n$-dimensional torus manifold $M$ is said to be {\it locally standard}
if every point in $M$ has a $T$-invariant open neighborhood $U$ which is
weakly equivariantly homeomorphic to an open subset $\Omega_{U}\subset \C^{n}$
invariant under the standard $T^{n}$-action on $\C^{n}$, 
where two group actions $(U,T)$ and $(\Omega_{U},T)$ are said to be {\it weakly equivariantly homeomorphic} 
if  
there is an equivariant homeomorphism from $U$ to $\Omega_{U}$ up to an automorphism on $T^{n}$ (see e.g. \cite[Section 2.1]{Ku11} for details).  

Let $M_{i}$, $i=1,\ \ldots,\ m$, be a codimension-two torus submanifold in a $2n$-dimensional torus manifold $M$ 
which is fixed by some circle subgroup $T_{i}$ in $T$.
Such $M_{i}$ is a $(2n-2)$-dimensional torus manifold with $T/T_{i}$-action, 
called a {\it characteristic submanifold}.
Because a torus manifold $M$ is compact, the cardinality of all characteristic submanifolds in $M$ is finite.
If $M$ is locally standard, each characteristic submanifold is also locally standard.

An {\it omniorientation} $\mathcal{O}$ of $M$ is 
a choice of orientation for the torus manifold $M$ 
as well as for each characteristic submanifold.
If there are just $m$ characteristic submanifolds in $M$, 
there are exactly $2^{m+1}$ omniorientations (see \cite{BuPa}, \cite{HaMa}).
If $M$ has a $T$-invariant almost complex structure $J$ (in this case, $M$ is automatically locally standard), 
then there exists the canonical omniorientation $\mathcal{O}_{J}$ determined by $J$.
We call the torus manifold $M$ with a fixed omniorientation $\mathcal{O}$ an {\it omnioriented torus manifold} and 
denote it by $(M,\mathcal{O})$.

%%%%%%%%%%%%%%%%%%%%%%%%%%%%%%%%%%%%%%%%%%%%%%%%%%%%%%%%%%%%%%%%%%%%%%%%%%%
%section 2.2
%%%%%%%%%%%%%%%%%%%%%%%%%%%%%%%%%%%%%%%%%%%%%%%%%%%%%%%%%%%%%%%%%%%%%%%%%%%
\subsection{Orbit spaces of locally standard torus manifolds}
\label{sect2.2}

The orbit space $M/T$ of a locally standard torus manifold $M$ naturally admits the structure of a ``topological'' manifold with corners.
We next recall the basic facts of a topological manifold with corners 
(cf. the definition of a smooth manifold with corners in \cite{Le}) and introduce the structure on $M/T$.

The following notations will be often used:
\begin{eqnarray*}
[n]=\{0,\ 1,\ \ldots,\ n\},
\end{eqnarray*}
and 
\begin{eqnarray*}
\R^{n}_{+}=\{(x_{1},\ldots,x_{n})\in \R^{n}\ |\ x_{i}\ge 0,\ i=1,\ldots,n\}.
\end{eqnarray*}
Let $Q^{n}$ be an $n$-dimensional topological manifold with boundary.
A {\it chart with corners} for $Q^{n}$ is a pair $(V,\psi_{V})$, where 
$V$ is an open subset of $Q^{n}$ and 
\[
\psi_{V}:V\to \R^{n}_{+}
\]
is homeomorphic from $V$ to a (relatively) open subset $\Omega_{V}\subset \R^{n}_{+}$.
Two charts with corners $(V,\psi_{V}),\ (W,\psi_{W})$ are said to be {\it (topologically) compatible} if the
composition of functions $\psi_{V}\circ \psi_{W}^{-1}:\psi_{W}(V\cap W)\to \psi_{V}(V\cap W)$ 
is a strata-preserving homeomorphism. 
This implies that if $\psi_{W}(p)\in \R^{n}_{+}$ contains exactly $k$ zero coordinates then $\psi_{V}(p)\in \R^{n}_{+}$ also contains exactly $k$ zero coordinates for $0\le k\le n$.
We call the collection of compatible charts with corners $\{(V,\psi_{V})\}$ whose domains cover $Q^{n}$ an {\it atlas}.
Then, its maximal atlas is called a {\it structure with corners} of $Q^{n}$.
A topological manifold with boundary together with a structure with corners is called 
a {\it (topological) manifold with corners}.
Let $p\in Q^{n}$ be a point of an $n$-dimensional manifold with corners $Q^{n}$.
A chart $(V,\psi_{V})$ with corners such that $p\in V$ defines the number $d(p)\in [n]$ 
by the number of zero-coordinates of $\psi_{V}(p)\in \R^{n}_{+}$.
By the compatibility of charts, this number is independent on the choice of a chart with corners which contains $p$.
Therefore, the map $d:Q^{n}\to [n]$ is well-defined. 
The number $d(p)$ is called a {\it depth} of $p$.
We call the closure of a connected component of $d^{-1}(k)$ ($0\le k\le n$) a {\it codimension-$k$ face}. 
In particular, the codimension-$0$ face is $Q^{n}$ itself. 
Moreover,  
codimension-$1$, $(n-1)$ and $n$ faces are called {\it facet}, {\it edge} and {\it vertex}, respectively.
The set of all edges and vertices is called a {\it one-skeleton of $Q^{n}$} (or a {\it graph of $Q^{n}$}).
By restricting the structure with corners on $Q^{n}$ to faces, 
we may regard each codimension-$k$ face as an $(n-k)$-dimensional (sub)manifold with corners.
%%%%%%%%%%%%%%%%%%%%%%%%%%%%%%%%%%%%%%%%%%%%%%%%%%%%%%%%%%%%%%%%%%%%%%%%%%%
%Definition 2.1 (manifold with faces)
%%%%%%%%%%%%%%%%%%%%%%%%%%%%%%%%%%%%%%%%%%%%%%%%%%%%%%%%%%%%%%%%%%%%%%%%%%%
\begin{definition}[Manifold with faces]
An $n$-dimensional manifold with corners $Q$ is said to be a {\it manifold with faces} (or a {\it nice manifold with corners})
if $Q$ satisfies the following conditions:
\begin{enumerate}
\item for every $k\in [n]$, there exists a codimension-$k$ face;
\item for each codimension-$k$ face $H$, there are exactly $k$ facets $F_{1},\ \ldots,\ F_{k}$ 
such that $H$ is a connected component of $\cap_{i=1}^{k}F_{i}$; moreover, $H\cap F\not=H$ 
for any facet $F\not=F_{i}$ ($i=1,\ldots,k$).
\end{enumerate} 
\end{definition}

Let $(M,T)$ be a torus manifold.
When $(M,T)$ is locally standard,
by using the differentiable slice theorem,
the orbit space $M/T$ has the structure of an $n$-dimensional manifold with faces.
On the other hand, when $M$ satisfies $H^{odd}(M)=0$, 
its orbit space $M/T$ satisfies a stronger condition by
the following Masuda-Panov theorem (see \cite[Lemma 2.1]{MaPa} and \cite[Theorem 2]{MaPa}):
%%%%%%%%%%%%%%%%%%%%%%%%%%%%%%%%%%%%%%%%%%%%%%%%%%%%%%%%%%%%%%%%%%%%%%%%%%%
%Theorem 2.2 (2-equiv)
%%%%%%%%%%%%%%%%%%%%%%%%%%%%%%%%%%%%%%%%%%%%%%%%%%%%%%%%%%%%%%%%%%%%%%%%%%%
\begin{theorem}[Masuda-Panov]
\label{2-equiv}
Let $M$ be a $2n$-dimensional torus manifold.
Then, the following two conditions are equivalent:
\begin{enumerate}
\item $H^{odd}(M)=0$;
\item the $T$-action on $M$ is locally standard and its orbit space $M/T$ has 
the structure of an $n$-dimensional face acyclic manifold with corners.
\end{enumerate}
\end{theorem}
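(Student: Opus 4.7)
The plan is to prove the two implications separately; the forward direction $(1)\Rightarrow(2)$ is the subtler half, while $(2)\Rightarrow(1)$ is essentially a stratified spectral sequence argument.

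For $(1)\Rightarrow(2)$, the first step is to note that $H^{odd}(M;\Z)=0$ together with $M^{T}\not=\emptyset$ forces $M$ to be equivariantly formal over $\Z$, so that $H^{*}_{T}(M;\Z)$ is a free $H^{*}(BT;\Z)$-module and $\chi(M)=|M^{T}|$. At each fixed point $p$ the tangential weights $\alpha_{1}(p),\ldots,\alpha_{n}(p)\in H^{2}(BT;\Z)$ must form a $\Z$-basis: any failure of unimodularity would introduce torsion in $H^{*}_{T}(M;\Z)$ incompatible with the localization injection into $\bigoplus_{p\in M^{T}}H^{*}(BT;\Z)$. The slice theorem then yields local standardness, and the orbit-type stratification equips $Q=M/T$ with a manifold-with-faces structure. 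Face acyclicity is obtained by induction on dimension: each face $F$ of codimension $k$ is a connected component of $M^{K}/(T/K)$ for a rank-$k$ subtorus $K\le T$, and Chang--Skjelbred / Atiyah--Bredon propagate integral equivariant formality to $M^{K}$, so the inductive hypothesis applies. This reduces matters to showing that $Q$ itself is acyclic whenever $M$ is a locally standard torus manifold with vanishing odd cohomology, which I would extract by comparing the (collapsing) Serre spectral sequence of the Borel fibration $M\hookrightarrow M_{T}\to BT$ with the natural map $M_{T}\to Q$ whose fibres are classifying spaces of the stabilisers.

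For $(2)\Rightarrow(1)$, I would filter $M$ by preimages $X_{k}:=\pi^{-1}(Q^{(k)})$, where $Q^{(k)}$ is the union of all faces of codimension at least $n-k$ and $\pi\colon M\to Q$ is the orbit map. Each layer $X_{k}\setminus X_{k-1}$ is a disjoint union of $T^{n-k}$-bundles over the interiors of codimension-$(n-k)$ faces; face acyclicity trivialises these bundles up to homotopy, so each contributes only even-degree cohomology. A Mayer--Vietoris / cellular spectral sequence built from the filtration then assembles $H^{*}(M;\Z)$ out of these even pieces with parity-preserving differentials, giving $H^{odd}(M;\Z)=0$.

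The main obstacle is the integral, as opposed to rational, nature of the statement. Over $\Q$ most of the ingredients --- equivariant formality passing to $M^{K}$, collapse of the Borel spectral sequence, triviality of orbit-type bundles over acyclic bases --- are classical; over $\Z$ each needs separate justification, and the delicate point is ruling out any $p$-torsion in $H^{*}_{T}(M;\Z)$ that could spoil either the unimodularity of tangential weights at $M^{T}$ or the $\Z$-acyclicity of individual faces.
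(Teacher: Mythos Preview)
The paper does not give its own proof of this statement: Theorem~\ref{2-equiv} is quoted as the Masuda--Panov theorem, with a citation to \cite[Lemma~2.1 and Theorem~2]{MaPa}, and is used as a black box. So there is no in-paper argument to compare your sketch against.

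That said, your outline has genuine gaps in both directions. For $(1)\Rightarrow(2)$, unimodularity of the tangential weights at a fixed point actually follows immediately from effectiveness (the isotropy representation at a fixed point of an effective action is faithful), so no localisation argument is needed there; but unimodularity at fixed points does \emph{not} by itself give local standardness everywhere. You still have to show that every isotropy subgroup is connected (a subtorus), i.e.\ rule out finite isotropy at points away from $M^{T}$. This is where $H^{odd}(M)=0$ is really used in Masuda--Panov, and your sketch skips it. For $(2)\Rightarrow(1)$, the claim that each stratum ``contributes only even-degree cohomology'' is false as stated: over the interior of a $k$-dimensional face the fibre is a $k$-torus (not a $T^{n-k}$), and $H^{*}(T^{k})$ is an exterior algebra with plenty of odd-degree classes. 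In the filtration spectral sequence the relative groups $H^{*}(X_{k},X_{k-1})$ are, for each $k$-face, a copy of $H^{*}(T^{k})$ shifted up by $k$, which is nonzero in all degrees from $k$ to $2k$. The vanishing of $H^{odd}(M)$ therefore requires substantial cancellation by differentials, and face acyclicity enters precisely to force that cancellation; this is the real content of the argument, not a triviality. Masuda--Panov handle it via the face ring description of $H^{*}_{T}(M)$ and Cohen--Macaulayness rather than the naive orbit-type filtration you describe.
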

Here, in Theorem \ref{2-equiv}, an $n$-dimensional {\it face acyclic} manifold with corners $Q$ is 
an $n$-dimensional manifold with faces such that 
all faces $F$ (include $Q$) of $Q$ are acyclic, i.e., $H_{*}(F)\simeq H_{0}(F)\simeq \Z$. 
For example, if $Q$ is a simply connected $3$-dimensional face acyclic manifold with corners, then it is easy to check that 
the boundary of $Q$ is homeomorphic to the $2$-sphere $S^{2}$.
Moreover, in this case, we can also check that $Q$ itself is homeomorphic to the $3$-dimensional disk $D^{3}$.
Therefore, as one of the conclusions of Theorem \ref{2-equiv}, we have the following corollary:
%%%%%%%%%%%%%%%%%%%%%%%%%%%%%%%%%%%%%%%%%%%%%%%%%%%%%%%%%%%%%%%%%%%%%%%%%%%
%Corollary 2.3 
%%%%%%%%%%%%%%%%%%%%%%%%%%%%%%%%%%%%%%%%%%%%%%%%%%%%%%%%%%%%%%%%%%%%%%%%%%%
\begin{corollary}
\label{disk}
Let $M$ be a simply connected $6$-dimensional torus manifold with $H^{odd}(M)=0$.
Then, its orbit space $M/T$ is homeomorphic to the $3$-dimensional disk.
\end{corollary}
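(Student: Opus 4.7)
The plan is to extract Corollary~\ref{disk} from Theorem~\ref{2-equiv} by studying $Q := M/T$ as a compact topological $3$-manifold with boundary. By Theorem~\ref{2-equiv}, $Q$ is a $3$-dimensional face acyclic manifold with corners; in particular $Q$ itself is acyclic over $\Z$. After forgetting the corner structure (the charts into $\R^{3}_{+}$ only refine, not alter, the underlying structure of a topological manifold with boundary), $Q$ is a compact topological $3$-manifold with boundary; since $M$ is oriented and $T$ acts by orientation-preserving diffeomorphisms, the free part of the $T$-action endows $Q$ with an orientation.

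First I would identify $\partial Q$ with $S^{2}$. The boundary $\partial Q$ is a closed oriented $2$-manifold. Lefschetz duality gives $H_{k}(Q,\partial Q;\Z) \cong H^{3-k}(Q;\Z)$, which vanishes except in degree $3$ where it equals $\Z$, since $Q$ is acyclic. Feeding this into the long exact sequence of the pair $(Q,\partial Q)$ yields $H_{2}(\partial Q) \cong \Z$, $H_{1}(\partial Q) = 0$, $H_{0}(\partial Q) \cong \Z$, so the classification of closed surfaces forces $\partial Q \cong S^{2}$.

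To conclude $Q \cong D^{3}$, I would first note that since $T$ is a connected compact Lie group, the orbit map $M \to Q$ induces a surjection on $\pi_{1}$ (a standard fact: any loop in $Q$ lifts, up to a path inside an orbit, to a loop in $M$, because orbits are connected), so $\pi_{1}(M) = 0$ gives $\pi_{1}(Q) = 0$. I would then glue a closed $3$-disk along $\partial Q \cong S^{2}$ to form $\widehat{Q} := Q \cup_{S^{2}} D^{3}$. By van Kampen, $\pi_{1}(\widehat{Q}) = 0$, so $\widehat{Q}$ is a simply connected closed topological $3$-manifold; the $3$-dimensional Poincar\'e conjecture then gives $\widehat{Q} \cong S^{3}$, and removing the interior of the attached disk yields $Q \cong S^{3} \setminus \mathrm{int}(D^{3}) \cong D^{3}$. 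The main nontrivial input here is of course the appeal to the Poincar\'e conjecture (Perelman); Lefschetz duality for topological manifolds with boundary and the $\pi_{1}$-surjectivity of the orbit map for connected compact Lie group actions are both standard.
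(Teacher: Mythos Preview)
Your proof is correct and follows essentially the same route the paper sketches in the paragraph just before the corollary: deduce from Theorem~\ref{2-equiv} that $Q=M/T$ is a simply connected face acyclic $3$-manifold with corners, identify $\partial Q\cong S^{2}$, and then conclude $Q\cong D^{3}$. The paper leaves the verification of these last two steps to the reader; you have supplied precisely those details (Lefschetz duality for $\partial Q\cong S^{2}$, $\pi_{1}$-surjectivity of the orbit map for connected $T$, and capping off plus the Poincar\'e conjecture for $Q\cong D^{3}$), so the arguments match.
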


By the definition of a manifold with faces $Q$, 
we can define a simplicial poset (partially ordered set) $\mathcal{P}(Q)$, called a {\it face poset} of $Q$ (see \cite{Ma05}),
by the set of faces in $Q$ with the empty-set $\emptyset$ ordered by inclusion,
where the empty set $\emptyset$ is the smallest element by this order, say $\preceq$.
We often denote the face poset structure of $Q$ as $(\mathcal{P}(Q), \preceq)$.
Let $Q_{1}$ and $Q_{2}$ be $n$-dimensional manifolds with faces.
We call $Q_{1}$ and $Q_{2}$ are {\it combinatorially equivalent} if their 
face posets $(\mathcal{P}(Q_{1}), \preceq_{1})$ and $(\mathcal{P}(Q_{2}), \preceq_{2})$ are isomorphic as a poset (i.e., 
there is an order-preserving bijection between them).
We denote them by $Q_{1}\approx_{c}Q_{2}$.
By definition of weakly equivariant homeomorphism, if two locally standard torus manifolds $M_{1}$ and $M_{2}$ are weakly equivariantly homeomorphic then $M_{1}/T\approx_{c}M_{2}/T$.

%%%%%%%%%%%%%%%%%%%%%%%%%%%%%%%%%%%%%%%%%%%%%%%%%%%%%%%%%%%%%%%%%%%%%%%%%%%
%section 2.3
%%%%%%%%%%%%%%%%%%%%%%%%%%%%%%%%%%%%%%%%%%%%%%%%%%%%%%%%%%%%%%%%%%%%%%%%%%%
\subsection{Characteristic functions}
\label{sect2.3}

Let $M$ be a $2n$-dimensional locally standard torus manifold.
By the argument demonstrated in Section \ref{sect2.2}, 
the orbit map $\pi:M\to M/T=Q$ may be regarded as the projection onto some manifold with faces $Q$.
Let $\mathcal{F}(Q)=\{F_{1},\ \ldots,\ F_{m}\}\subset \mathcal{P}(Q)$ be the set of all facets in $Q$.
By the definition of facet $F_{i}\in \mathcal{F}(Q)$, its preimage $\pi^{-1}(F_{i})$ is a characteristic submanifold $M_{i}$. 
Then, there exists the circle subgroup $T_{i}(\subset T)$ fixing $M_{i}=\pi^{-1}(F_{i})$ (recall that $\dim M_{i}=2n-2$).
Recall that $T_{i}$ is determined by a primitive element in $\algt_{\Z}\simeq \Z^{n}$ (the lattice of Lie algebra of $T$).
Therefore, by taking this primitive element (up to sign) in $\algt_{\Z}$, we can define the following map: 
\[
\lambda:\mathcal{F}(Q)\to \algt_{\Z}/\{\pm 1\}
\]
where $\algt_{\Z}/\{\pm 1\}$ represents the quotient of $\algt_{\Z}$ by signs.
We call $\lambda$ a {\it characteristic function}.

Now the choice of omniorientation $\mathcal{O}$ of $M$ determines the sign of $\lambda$ as follows.
Fix an omniorientation $\mathcal{O}$ of $M$.
Namely, we fix the orientation of the tangent bundle of $M$ (resp. $M_{i}$), say $\tau$ (resp. $\tau_{i}$).
Restricting $\tau$ to the submanifold $M_{i}$, say $\tau|_{M_{i}}$, we obtain the $T^{n}$-equivariant decomposition
$\tau|_{M_{i}}\simeq \tau_{i}\oplus\nu_{i}$, where $\nu_{i}$ is the $T_{i}$-equivariant normal bundle of $M_{i}$.
Therefore, because we fix the orientation of $\tau|_{M_{i}}$ (induced from the orientation of $\tau$) and that of $\tau_{i}$,
we may choose an orientation of $\nu_{i}$ such that the orientation of $\tau|_{M_{i}}$ coincides with that of $\tau_{i}\oplus \nu_{i}$ (thus, we may regard $\nu_{i}$ as the complex line bundle over $M_{i}$).
Because $T_{i}$ acts on $\nu_{i}$, 
we may choose an orientation of $T_{i}$ such that the $T_{i}$-action preserves the orientation of $\nu_{i}$.
This orientation of $T_{i}$ determines the sign of $\lambda(F_{i})$ for $i=1,\ \ldots,\ m$.  
By this way, we have the following function: 
\[
\lambda_{\mathcal{O}}:\mathcal{F}(Q)\to \algt_{\Z}.
\]
This is called an {\it omnioriented characteristic function} (of $(M,\mathcal{O})$), in this paper.

%%%%%%%%%%%%%%%%%%%%%%%%%%%%%%%%%%%%%%%%%%%%%%%%%%%%%%%%%%%%%%%%%%%%%%%%%%%
%Remark 2.4
%%%%%%%%%%%%%%%%%%%%%%%%%%%%%%%%%%%%%%%%%%%%%%%%%%%%%%%%%%%%%%%%%%%%%%%%%%%
\begin{remark}
\label{rem-chfct}
The characteristic function defined in \cite{Wi2} may be regarded as the characteristic function $\lambda$ as above.
On the other hand, the characteristic function defined in \cite{DaJa} may be regarded as the characteristic function $\lambda_{\mathcal{O}}$ as above
by taking an appropriate omniorientation (also see \cite[Section 5.2]{BuPa}).
\end{remark}

Let $p\in M^{T}$.
We define the subset $I_{p}\subset [m]$ as follows:
\[
I_{p}=\{i\in [m]\ |\ p\in M_{i}\}.
\]
By the differentiable slice theorem around $p\in M^{T}$, we have that its cardinality $|I_{p}|=n$ for every $p\in M^{T}$.
Put $I_{p}=\{i_{1},\ldots,i_{n}\}$.
Because the $T$-action on $M$ is effective, 
$\{\lambda(F_{i_{1}}),\ldots,\lambda(F_{i_{n}})\}$ spans $\algt^{*}_{\Z}/\{\pm 1\}$, i.e.,  
the determinant of the induced $(n\times n)$-matrix 
\[
( \lambda(F_{i_{1}}) \cdots \lambda(F_{i_{n}}))
\] 
satisfies that
\begin{eqnarray}
\label{smooth-condition}
\det \left( \lambda(F_{i_{1}}) \cdots \lambda(F_{i_{n}}) \right)=\pm 1.
\end{eqnarray}
Similarly, we have 
\begin{eqnarray}
\label{smooth-condition2}
\det \left( \lambda_{\mathcal{O}}(F_{i_{1}}) \cdots \lambda_{\mathcal{O}}(F_{i_{n}}) \right)=\pm 1.
\end{eqnarray}
for each $n$ facets such that $\cap_{j=1}^{n} F_{i_{j}}=\{p\}$ for some vertex $p\in Q$ (called {\it the facets around a vertex}).

Motivated by the observations as above, 
we may abstractly define the characteristic function on a manifold with faces as follows (see \cite{BuPa}, \cite{DaJa} for simple polytopes and \cite{MaPa}, \cite{Wi2} for manifold with faces):

%%%%%%%%%%%%%%%%%%%%%%%%%%%%%%%%%%%%%%%%%%%%%%%%%%%%%%%%%%%%%%%%%%%%%%%%%%%
%characteristic function 
%%%%%%%%%%%%%%%%%%%%%%%%%%%%%%%%%%%%%%%%%%%%%%%%%%%%%%%%%%%%%%%%%%%%%%%%%%%
\begin{definition}
Let $Q$ be an $n$-dimensional manifold with faces and $\mathcal{F}(Q)$ be the set of its facets.
Let $\algt_{\Z}$ be the lattice of Lie algebra of $T^{n}$ and $\algt_{\Z}/\{\pm 1\}$ be its quotient by $\{\pm 1\}$.
Then, a function $\lambda:\mathcal{F}(Q)\to \algt_{\Z}/\{\pm 1\}$ is said to be 
a {\it characteristic function} if $\lambda$ satisfies the relation \eqref{smooth-condition} for the facets around every vertex, and a function $\lambda_{\mathcal{O}}:\mathcal{F}(Q)\to \algt_{\Z}$ is said to be an 
{\it omnioriented characteristic function} if $\lambda_{\mathcal{O}}$ satisfies the relation \eqref{smooth-condition2} for the facets around every  vertex.
\end{definition}

We denote an $n$-dimensional manifold with faces $Q$ with its characteristic function $\lambda$ (resp. omnioriented characteristic function $\lambda_{\mathcal{O}}$) by $(Q,\lambda)$ (resp. $(Q,\lambda_{\mathcal{O}})$).

Let $Q_{1}$ and $Q_{2}$ be manifolds with faces and $\lambda_{1}$ (resp. $\lambda_{\mathcal{O}_{1}}$) and $\lambda_{2}$ (resp. $\lambda_{\mathcal{O}_{2}}$) be their (resp. omnioriented) characteristic functions, respectively. 
Assume that $Q_{1}\approx_{c} Q_{2}$ and it is induced by the bijective map $\widetilde{f}:\mathcal{P}(Q_{1})\to \mathcal{P}(Q_{2})$.
Denote its restriction onto the set of facets as 
\[
f=\widetilde{f}|_{\mathcal{F}(Q_{1})}:\mathcal{F}(Q_{1})\to \mathcal{F}(Q_{2}).
\] 
Then, we call that $(Q_{1},\lambda_{1})$ and $(Q_{2},\lambda_{2})$ are {\it combinatorially equivalent} if the following diagram commutes:
\[
\xymatrix{
& \mathcal{F}(Q_{1}) \ar[d]^{f} \ar[r]^{\lambda_{1}} & \algt_{\Z}/\{\pm 1\} \ar[d]^{Id} \\
& \mathcal{F}(Q_{2}) \ar[r]^{\lambda_{2}} & \algt_{\Z}/\{\pm 1\}
}
\]
Similarly, $(Q_{1},\lambda_{\mathcal{O}_{1}})$ and $(Q_{2},\lambda_{\mathcal{O}_{2}})$ are {\it combinatorially equivalent} if the following diagram commutes:
\[
\xymatrix{
& \mathcal{F}(Q_{1}) \ar[d]^{f} \ar[r]^{\quad \lambda_{\mathcal{O}_{1}}} & \algt_{\Z} \ar[d]^{Id} \\
& \mathcal{F}(Q_{2}) \ar[r]^{\quad \lambda_{\mathcal{O}_{2}}} & \algt_{\Z}
}
\]

Note that the characteristic function $\lambda$ can be obtained by ignoring sings from the omnioriented characteristic function  $\lambda_{\mathcal{O}}$; we call such $\lambda$ an {\it induced characteristic function} from $\lambda_{\mathcal{O}}$.
On the other hand, by choosing a sign for each facet, we can obtain an 
omnioriented characteristic function  $\lambda_{\mathcal{O}}$ from the characteristic function $\lambda$;
we call such $\lambda_{\mathcal{O}}$ an {\it induced oriented characteristic function} from $\lambda$.
Therefore, we have the following lemma:
%%%%%%%%%%%%%%%%%%%%%%%%%%%%%%%%%%%%%%%%%%%%%%%%%%%%%%%%%%%%%%%%%%%%%%%%%%%
%Lemma 2.6
%%%%%%%%%%%%%%%%%%%%%%%%%%%%%%%%%%%%%%%%%%%%%%%%%%%%%%%%%%%%%%%%%%%%%%%%%%%
\begin{lemma}
\label{omni->nonomni}
If $(Q_{1},\lambda_{\mathcal{O}_{1}})$ and $(Q_{2},\lambda_{\mathcal{O}_{2}})$ are combinatorially equivalent, then
their induced $(Q_{1},\lambda_{1})$ and $(Q_{2},\lambda_{2})$ are also combinatorially equivalent.

If $(Q_{1},\lambda_{1})$ and $(Q_{2},\lambda_{2})$ are combinatorially equivalent, then
there are induced omnioriented characteristic functions $\lambda_{\mathcal{O}_{1}}$ and $\lambda_{\mathcal{O}_{2}}$ such that $(Q_{1},\lambda_{\mathcal{O}_{1}})$ and $(Q_{2},\lambda_{\mathcal{O}_{2}})$ are combinatorially equivalent.
\end{lemma}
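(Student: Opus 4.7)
The lemma is essentially a formal unwinding of the relationship between $\lambda$ and $\lambda_{\mathcal{O}}$, which are linked by the quotient map $q\colon\algt_{\Z}\to\algt_{\Z}/\{\pm1\}$. My plan is to treat the two implications separately, in each case reducing to a straightforward diagram chase.

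For the first implication, I would note that by the very definition of an induced characteristic function, $\lambda_{i} = q\circ\lambda_{\mathcal{O}_{i}}$ for $i=1,2$. Hence the square for $(\lambda_{1},\lambda_{2})$ is obtained from the square for $(\lambda_{\mathcal{O}_{1}},\lambda_{\mathcal{O}_{2}})$ by post-composing both of the horizontal arrows with $q$ (on the right the vertical map is still $\mathrm{Id}$, since $q$ is compatible with it). Commutativity of the omnioriented square thus immediately yields commutativity of the unoriented one, and the same map $f\colon\mathcal{F}(Q_{1})\to\mathcal{F}(Q_{2})$ coming from $\widetilde{f}\colon\mathcal{P}(Q_{1})\to\mathcal{P}(Q_{2})$ still works.

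For the second implication, I would proceed by lifting. Starting from combinatorial equivalence $f\colon\mathcal{F}(Q_{1})\to\mathcal{F}(Q_{2})$ of $(Q_{1},\lambda_{1})$ and $(Q_{2},\lambda_{2})$, I first pick an arbitrary sign-lift $\lambda_{\mathcal{O}_{1}}\colon\mathcal{F}(Q_{1})\to\algt_{\Z}$ of $\lambda_{1}$ (i.e., choose a representative in $\algt_{\Z}$ for each class in $\algt_{\Z}/\{\pm1\}$). Then I define $\lambda_{\mathcal{O}_{2}}$ on $\mathcal{F}(Q_{2})$ by transporting this choice through $f$, namely $\lambda_{\mathcal{O}_{2}}(f(F))=\lambda_{\mathcal{O}_{1}}(F)$ for each facet $F\in\mathcal{F}(Q_{1})$. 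This is well-defined because $f$ is a bijection, and it is a genuine lift of $\lambda_{2}$ since by hypothesis $\lambda_{2}(f(F))=\lambda_{1}(F)$ in $\algt_{\Z}/\{\pm1\}$. The square with $\lambda_{\mathcal{O}_{1}},\lambda_{\mathcal{O}_{2}}$ commutes by construction.

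The only remaining point is verifying that $\lambda_{\mathcal{O}_{2}}$ satisfies the determinant condition \eqref{smooth-condition2} at each vertex of $Q_{2}$. This is where the face-poset isomorphism $\widetilde{f}$ (not just its restriction to facets) is used: a vertex $v$ of $Q_{2}$ corresponds via $\widetilde{f}^{-1}$ to a vertex of $Q_{1}$ surrounded by facets $F_{i_{1}},\dots,F_{i_{n}}$, and the facets around $v$ are then exactly $f(F_{i_{1}}),\dots,f(F_{i_{n}})$. Hence the $n\times n$ matrix computing $\det\bigl(\lambda_{\mathcal{O}_{2}}(f(F_{i_{1}}))\cdots\lambda_{\mathcal{O}_{2}}(f(F_{i_{n}}))\bigr)$ equals the matrix for $\lambda_{\mathcal{O}_{1}}$ at the corresponding vertex of $Q_{1}$, whose determinant is $\pm1$ by assumption on $\lambda_{\mathcal{O}_{1}}$. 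There is no real obstacle here; the mildest point of care is ensuring that the chosen lift is made first on $Q_{1}$ and then transported, rather than lifting $\lambda_{1}$ and $\lambda_{2}$ independently (which would generally break the commutative square), but this is handled automatically by defining $\lambda_{\mathcal{O}_{2}}$ through $f$.
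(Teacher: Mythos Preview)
Your proposal is correct and is precisely the argument the paper has in mind: the paper does not give a formal proof of this lemma at all, but merely states it after observing that $\lambda$ is obtained from $\lambda_{\mathcal{O}}$ by forgetting signs and conversely $\lambda_{\mathcal{O}}$ from $\lambda$ by choosing signs. Your diagram chase and transport-through-$f$ construction are exactly the unwinding of that remark, and your care in checking the determinant condition for $\lambda_{\mathcal{O}_{2}}$ via the poset isomorphism $\widetilde{f}$ is the only point the paper leaves entirely implicit.
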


Now we may introduce one of the key facts to prove our main theorem (see \cite[Theorem 1.3, Theorem 6.1]{Wi2}):

%%%%%%%%%%%%%%%%%%%%%%%%%%%%%%%%%%%%%%%%%%%%%%%%%%%%%%%%%%%%%%%%%%%%%%%%%%%
%Theorem 2.7 (Wiemeler)
%%%%%%%%%%%%%%%%%%%%%%%%%%%%%%%%%%%%%%%%%%%%%%%%%%%%%%%%%%%%%%%%%%%%%%%%%%%
\begin{theorem}[Wiemeler]
\label{Wi2}
Let $M_{1}$ and $M_{2}$ be a $6$-dimensional simply connected torus manifold with $H^{odd}(M)=0$
and $(Q_{1},\lambda_{1})$ and $(Q_{2},\lambda_{2})$ be their orbit spaces 
%equipped with the structure of $3$-dimensional face acyclic manifold with corners and characteristic functions.
with characteristic functions.
Then, the following three statements are equivalent:
\begin{enumerate}
\item $(Q_{1},\lambda_{1})$ and $(Q_{2},\lambda_{2})$ are combinatorially equivalent;
\item $M_{1}$ and $M_{2}$ are equivariantly homeomorphic; 
\item $M_{1}$ and $M_{2}$ are equivariantly diffeomorphic.
\end{enumerate}
\end{theorem}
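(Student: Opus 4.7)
The implications $(3)\Rightarrow (2)\Rightarrow (1)$ are essentially formal: a diffeomorphism is a homeomorphism, and any equivariant homeomorphism $M_1\to M_2$ descends to a homeomorphism $M_1/T\to M_2/T$ which preserves orbit types. Since characteristic submanifolds are exactly the codimension-two orbit-type strata, this descended map sends facets to facets, induces an isomorphism of face posets, and (since it sends the stabilizer circle $T_i$ of $\pi^{-1}(F_i)$ to the stabilizer of its image) intertwines the characteristic functions. So the entire content of the theorem is the implication $(1)\Rightarrow (3)$, which I would split as $(1)\Rightarrow (2)$ followed by $(2)\Rightarrow (3)$.

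For $(1)\Rightarrow(2)$ the plan is the usual Davis--Januszkiewicz-style reconstruction, adapted to manifolds with faces. Given $(Q,\lambda)$ define $M(Q,\lambda):=(Q\times T)/\!\sim$, where $(p,t)\sim(p,s)$ iff $ts^{-1}$ lies in the subtorus of $T$ generated by $\{\lambda(F_i):F_i\in\mathcal{F}(Q),\ F(p)\subseteq F_i\}$, with $F(p)$ the smallest face containing $p$. The condition \eqref{smooth-condition} guarantees that these subtori have the correct ranks, so $M(Q,\lambda)$ is a locally standard $T$-manifold with orbit space $Q$ and characteristic data $\lambda$. The differentiable slice theorem applied near fixed points shows every locally standard $M$ with $H^{odd}(M)=0$ is equivariantly homeomorphic to $M(M/T,\lambda_M)$: indeed both spaces are $T$-bundles over the free-orbit stratum glued together over the face stratification in exactly the same way, and the stratified homeomorphism type is recovered once $Q$ and $\lambda$ are known. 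To pass from the abstract combinatorial equivalence in (1) to a genuine homeomorphism $Q_1\to Q_2$ of manifolds with corners, I would use Corollary~\ref{disk}: both $Q_i\cong D^3$, and the face structure is a cell decomposition of $\partial Q_i\cong S^2$. Combinatorially equivalent cell decompositions of $S^2$ are homeomorphic (a direct consequence of the Schoenflies theorem applied inductively to facets, edges, and vertices), and such a homeomorphism extends by coning to a homeomorphism $D^3\to D^3$ respecting the face poset. Combining this with the matching of $\lambda_1$ and $\lambda_2$ under $\widetilde{f}$ produces the desired equivariant homeomorphism $M(Q_1,\lambda_1)\to M(Q_2,\lambda_2)$.

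For $(2)\Rightarrow(3)$ I would upgrade the equivariant homeomorphism to an equivariant diffeomorphism by equivariant smoothing over the stratification. The orbit space stratification of $Q\cong D^3$ has only finitely many strata, each of dimension $\leq 3$, and over each stratum of codimension $k$ the preimage in $M$ is a smooth $T^{n-k}$-bundle determined up to equivariant diffeomorphism by its equivariant clutching data, which is in turn read off from $\lambda$ via \eqref{smooth-condition}. I would therefore proceed inductively on the skeleton of $Q$: near each vertex the differentiable slice theorem provides a canonical equivariantly diffeomorphic neighborhood modeled on $\C^3$ with the standard $T^3$-action, and these local models glue over edges and facets because the identifications prescribed by $\lambda$ are the same on both sides. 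Since all gluing obstructions live in the mapping class groups of the strata in $Q$ and its boundary, which in dimensions $\leq 2$ are determined by their homotopy types, the resulting smooth structures on $M_1$ and $M_2$ are equivariantly isotopic.

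The main obstacle is the last step: controlling equivariant smoothings in dimension six. The risk is that an equivariant homeomorphism could fail to be isotopic to a diffeomorphism through a low-dimensional exotic phenomenon. The crucial simplification, which I would lean on, is that the orbit space is the $3$-disk and every stratum is a face of a $3$-disk, so the equivariant clutching data is forced on a $2$-dimensional CW complex where smoothing obstructions vanish; the top-dimensional cell of $Q$ then only contributes a principal $T^3$-bundle over $D^3$, which is necessarily trivial and carries a unique smooth structure. Assembling these observations should close the loop and complete the proof of $(1)\Rightarrow(3)$.
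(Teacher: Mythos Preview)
The paper does not prove this theorem. It is quoted verbatim as a black-box input from Wiemeler's paper \cite{Wi2} (Theorems~1.3 and~6.1 there), introduced with the phrase ``Now we may introduce one of the key facts to prove our main theorem.'' There is therefore no proof in this paper to compare your proposal against.

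As for the proposal itself: your outline of $(1)\Rightarrow(2)$ is essentially the standard Davis--Januszkiewicz reconstruction, and in this setting it works because Corollary~\ref{disk} gives $Q\cong D^{3}$, so the principal $T^{3}$-bundle over the interior is trivial and the canonical model $M(Q,\lambda)$ really does recover $M$ up to equivariant homeomorphism. That part is fine.

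The step $(2)\Rightarrow(3)$ is where the genuine content of Wiemeler's theorem lies, and your sketch is too optimistic. Saying that ``gluing obstructions live in the mapping class groups of the strata in $Q$ and its boundary, which in dimensions $\leq 2$ are determined by their homotopy types'' does not settle the matter: the obstructions to upgrading an equivariant homeomorphism to an equivariant diffeomorphism are not simply mapping-class-group data on the quotient, but live in equivariant smoothing/concordance groups, and one must in particular rule out exotic equivariant smooth structures on the pieces and exotic equivariant gluings along their boundaries. Wiemeler's paper is devoted precisely to this analysis (hence its title), and the argument uses, among other things, the uniqueness of smooth structures in dimension~$3$ and explicit control of the equivariant diffeomorphism groups of the local models. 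Your final paragraph gestures at the right ingredients but does not supply them; as written it is a plausibility argument rather than a proof.
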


Therefore, by Corollary \ref{disk} and Theorem \ref{Wi2}, 
in order to classify all $6$-dimensional simply connected torus manifolds with $H^{odd}(M)=0$,
it is enough to classify all $(Q,\lambda)$'s up to combinatorial equivalent,
where $Q$ is a $3$-dimensional disk equipped with the structure of a manifold with faces.

%%%%%%%%%%%%%%%%%%%%%%%%%%%%%%%%%%%%%%%%%%%%%%%%%%%%%%%%%%%%%%%%%%%%%%%%%%%
%section 3
%%%%%%%%%%%%%%%%%%%%%%%%%%%%%%%%%%%%%%%%%%%%%%%%%%%%%%%%%%%%%%%%%%%%%%%%%%%
\section{Torus graph induced from manifold with faces}
\label{sect3}

Let $(M,\mathcal{O})$ be an omnioriented locally standard $2n$-dimensional torus manifold and 
$(Q,\lambda_{\mathcal{O}})$ be its orbit space with an omnioriented characteristic function.
From the one-skeleton of $(Q,\lambda_{\mathcal{O}})$, we can define a labelled graph, called a {\it torus graph}.
One of the key arguments to prove the main theorem is to classify all possible torus graphs (see Section \ref{sect7}).
To do that, in this section, we recall a torus graph defined by Maeda-Masuda-Panov \cite{MMP}.

Let $\Gamma$ be the graph of $Q$.
Let $V(\Gamma)$ be its vertices and $E(\Gamma)$ be its oriented edges, i.e.,
we distinguish two edges $pq$ and $qp$. 
For $p\in V(\Gamma)$, we denote the set of outgoing edges from $p$ as $E_{p}(\Gamma)$.
Because $Q$ is an $n$-dimensional manifold with faces, 
$|E_{p}(\Gamma)|=n$ and 
each edge $e\in E(\Gamma)$ is a connected component of $\cap_{i=1}^{n-1}F_{i}$,  
for some $F_{1},\ F_{2},\ldots, F_{n-1}\in \mathcal{F}(Q)$.
Moreover, for $p\in V(\Gamma)$ which is one of two vertices on $e$, 
there is another facet $F_{n}\in \mathcal{F}(Q)$ such that 
$\{p\}$ is a connected component of $\cap_{i=1}^{n}F_{i}$.
In other words, $F_{n}$ may be regarded as a normal facet of $e\in E(\Gamma)$ on $p\in V(\Gamma)$.
Put $\lambda_{\mathcal{O}}(F_{i})=a_{i}\in \algt_{\Z}\simeq \Z^{n}$.
Then, there exists unique $\alpha\in \algt_{\Z}^{*}$ such that 
\begin{eqnarray}
\label{dual}
\langle \alpha,a_{i} \rangle=0\ {\rm for}\ i=1,\ldots, n-1\quad {\rm and} \quad \langle \alpha,a_{n} \rangle=+1,
\end{eqnarray}
where $\langle , \rangle$ represents the pairing of $\algt^{*}$ and $\algt$.
Therefore, by this way, we can define a map $\mathcal{A}:E(\Gamma)\to \algt_{\Z}^{*}$ from the omnioriented characteristic function $\lambda_{\mathcal{O}}$.
This map $\mathcal{A}$ is called an {\it axial function} on $\Gamma$.
We call the labelled graph $(\Gamma,\mathcal{A})$ a {\it torus graph} induced from $(Q,\lambda_{\mathcal{O}})$ (or equivalently $(M,\mathcal{O})$).
We denote such a torus graph as $\Gamma(Q,\lambda_{\mathcal{O}})$ (or $(\Gamma_{M},\mathcal{A}_{M})$).
We can easily check the following proposition by definition of torus graphs (also see \cite{MMP}):
%%%%%%%%%%%%%%%%%%%%%%%%%%%%%%%%%%%%%%%%%%%%%%%%%%%%%%%%%%%%%%%%%%%%%%%%%%%
%Proposition 3.1
%%%%%%%%%%%%%%%%%%%%%%%%%%%%%%%%%%%%%%%%%%%%%%%%%%%%%%%%%%%%%%%%%%%%%%%%%%%
\begin{proposition}
\label{connection}
Let $(\Gamma,\mathcal{A})$ be a torus graph induced from $(Q,\lambda_{\mathcal{O}})$.
Then, $\Gamma$ is an $n$-valent regular graph, i.e., $|E_{p}(\Gamma)|=n$ for all $p\in V(\Gamma)$, 
and $(\Gamma,\mathcal{A})$ satisfies the following conditions:
\begin{enumerate}
\item $\mathcal{A}(e)=\pm \mathcal{A}(\bar{e})$, where $\bar{e}$ is the orientation reversed edge of $e$;
\item $\{\mathcal{A}(e)\ |\ e\in E_{p}(\Gamma)\}$ spans $\algt_{\Z}^{*}$ for all vertices $p\in V(\Gamma)$;
\item there is a bijection $\nabla_{pq}:E_{p}(\Gamma)\to E_{q}(\Gamma)$ for all edges whose initial vertex is $p$ and terminal vertex is $q$ such that the following conditions hold:
\begin{enumerate}
\item $\nabla_{\bar{e}}=\nabla_{e}^{-1}$;
\item $\nabla_{e}(e)=\bar{e}$;
\item $\mathcal{A}(\nabla_{pq}(e))-\mathcal{A}(e)\equiv 0\mod \mathcal{A}(pq)$ for all $e\in E_{p}(\Gamma)$.
\end{enumerate}
\end{enumerate}
\end{proposition}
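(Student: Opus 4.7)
The plan is to unpack the definitions locally at each vertex, exploiting the fact that a vertex of $Q$ lies in exactly $n$ facets (since $Q$ is an $n$-dimensional manifold with faces). All four claims (regularity plus conditions (1)--(3)) then reduce to elementary duality statements between the $n$ facets meeting at a vertex and the $n$ edges emanating from it.

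First, for $n$-valency, fix $p \in V(\Gamma)$. By the definition of a manifold with faces there are exactly $n$ facets $F_1,\ldots,F_n$ whose intersection has $\{p\}$ as a connected component, and the edges at $p$ correspond bijectively to the codimension-$(n-1)$ faces through $p$, i.e.\ to the $n$ choices of $n-1$ facets from $\{F_1,\ldots,F_n\}$. Denote by $e_i^p \in E_p(\Gamma)$ the edge associated to deleting $F_i$, so that the normal facet of $e_i^p$ at $p$ is $F_i$. Setting $a_j = \lambda_{\mathcal{O}}(F_j)$ and $\alpha_i = \mathcal{A}(e_i^p)$, the defining relation \eqref{dual} gives $\langle \alpha_i, a_j\rangle = \delta_{ij}$. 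Hence $\{\alpha_1,\ldots,\alpha_n\}$ is the dual basis of $\{a_1,\ldots,a_n\}$; since the latter is a $\Z$-basis of $\algt_{\Z}$ by \eqref{smooth-condition2}, the former is a $\Z$-basis of the dual lattice, which proves (2).

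For (1), observe that if $e = pq$ lies in the facets $F_1,\ldots,F_{n-1}$, the same is true of $\bar e = qp$; only the normal facet differs, say $F_n$ at $p$ and $F_n'$ at $q$. Both $\mathcal{A}(e)$ and $\mathcal{A}(\bar e)$ lie in the rank-one sublattice of the dual orthogonal to $\{a_1,\ldots,a_{n-1}\}$, and each is primitive by part (2), so they coincide up to sign.

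For (3), define $\nabla_{pq}(e_n^p) := \bar{e} = qp$ (so (b) holds by construction), and for $i < n$ set $\nabla_{pq}(e_i^p) := e_i^q$, the edge at $q$ obtained by deleting $F_i$ from $\{F_1,\ldots,F_{n-1},F_n'\}$ (picking the component through $q$). Property (a) follows because the inverse construction at $q$ simply swaps the roles of $F_n$ and $F_n'$. For (c), write $\beta_i = \mathcal{A}(e_i^q)$; for $i < n$, both $\alpha_i$ and $\beta_i$ satisfy $\langle \cdot, a_j\rangle = \delta_{ij}$ for $j = 1,\ldots,n-1$, so their difference vanishes on $a_1,\ldots,a_{n-1}$ and hence lies in the rank-one sublattice $\Z\cdot \mathcal{A}(pq)$ (here $\mathcal{A}(pq) = \alpha_n$ is the generator of precisely this sublattice). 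The case $i = n$ is trivial since $\mathcal{A}(\bar e) = \pm \mathcal{A}(e)$.

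The main technical point is verifying condition (c), but it reduces to the observation that two dual bases sharing $n-1$ of their ``partner'' vectors differ only in the remaining direction; once the connection is defined by the natural facet-swap $F_n \leftrightarrow F_n'$, the rest is automatic. No step requires more than linear algebra over $\Z$ together with the nice local structure of $Q$ at each vertex.
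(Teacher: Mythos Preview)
Your proof is correct. The paper itself does not give a proof of this proposition: it states that one ``can easily check'' it from the definitions and refers to \cite{MMP}. Your argument is exactly the direct verification the paper has in mind---using that the $n$ facets through a vertex and the $n$ edges out of it are in duality via \eqref{dual}, so that the axial functions at $p$ form the dual $\Z$-basis to $\{\lambda_{\mathcal{O}}(F_i)\}$, from which (1), (2), and the congruence (3c) all follow by elementary lattice duality. There is nothing to compare; you have simply written out what the paper leaves implicit.
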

We call $\nabla=\{\nabla_{e}\ |\ e\in E(\Gamma)\}$ a {\it connection} on $(\Gamma,\mathcal{A})$.

%%%%%%%%%%%%%%%%%%%%%%%%%%%%%%%%%%%%%%%%%%%%%%%%%%%%%%%%%%%%%%%%%%%%%%%%%%%
%Remark 3.2
%%%%%%%%%%%%%%%%%%%%%%%%%%%%%%%%%%%%%%%%%%%%%%%%%%%%%%%%%%%%%%%%%%%%%%%%%%%
\begin{remark}
The original torus graph (induced from an omnioriented torus manifold)
is defined by using the {\it tangential representations}, see \cite{MaPa}, \cite{MMP}.
The definition of torus graph as above is essentially same with this definition.

In \cite{MMP}, 
motivated by the GKM graph introduced by Guillemin-Zara in \cite{GuZa},
an $n$-valent graph $\Gamma$ with a label $\mathcal{A}:E(\Gamma)\to \algt_{\Z}^{*}$ which satisfies three conditions in Proposition \ref{connection} is called an {\it (abstract) torus graph} (i.e., there might be no geometric objects which define $(\Gamma,\mathcal{A})$).
\end{remark}

We next define the equivalence relation between two torus graphs.
We call the map $f:\Gamma_{1}=(V(\Gamma_{1}),E(\Gamma_{1}))\to \Gamma_{2}=(V(\Gamma_{2}),E(\Gamma_{2}))$ 
a {\it graph isomorphism}, 
if the restricted map $f|_{V}:V(\Gamma_{1})\to V(\Gamma_{2})$ and $f|_{E}:E(\Gamma_{1})\to E(\Gamma_{2})$ are bijective and
the following map commutes:
\[
\xymatrix{
& E(\Gamma_{1}) \ar[d]^{\pi_{V_{1}}} \ar[r]^{f|_{E}} & E(\Gamma_{2}) \ar[d]^{\pi_{V_{2}}} \\
& V(\Gamma_{1}) \ar[r]^{f|_{V}} & V(\Gamma_{2}) 
}
\]
where $\pi_{V}:E(\Gamma)\to V(\Gamma)$ is the map projecting onto the initial vertex, i.e., $\pi_{V}(pq)=p$.
In other words, the bijection $f|_{V}$ preserves the edges. 
Now we may define the equivalence relation.
%%%%%%%%%%%%%%%%%%%%%%%%%%%%%%%%%%%%%%%%%%%%%%%%%%%%%%%%%%%%%%%%%%%%%%%%%%%
%Definition 3.3 (equiv torus graph)
%%%%%%%%%%%%%%%%%%%%%%%%%%%%%%%%%%%%%%%%%%%%%%%%%%%%%%%%%%%%%%%%%%%%%%%%%%%
\begin{definition}
\label{def_torus-graph}
Let $(\Gamma_{1},\mathcal{A}_{1})$ and $(\Gamma_{2},\mathcal{A}_{2})$ be torus graphs.
We say $(\Gamma_{1},\mathcal{A}_{1})$ and $(\Gamma_{2},\mathcal{A}_{2})$ are {\it equivalent} 
if 
there is a graph isomorphism $f:\Gamma_{1}\to \Gamma_{2}$ such that the following diagram commutes:
\[
\xymatrix{
& E(\Gamma_{1}) \ar[d]^{f|_{E}} \ar[r]^{\quad \mathcal{A}_{1}} & \algt_{\Z}^{*} \ar[d]^{Id} \\
& E(\Gamma_{2}) \ar[r]^{\quad \mathcal{A}_{2}} & \algt_{\Z}^{*} 
}
\]
\end{definition}
Assume $(\Gamma,\mathcal{A})=\Gamma(Q,\lambda_{\mathcal{O}})$.
Let $\mathcal{P}_{k}(\Gamma,\mathcal{A})$ be the set of $k$-valent torus subgraphs in $(\Gamma,\mathcal{A})$, i.e.,
$k$-valent subgraphs in $\Gamma$ closed under the connection $\nabla$,
where $-1\le k\le n$ and we define $\mathcal{P}_{-1}(\Gamma,\mathcal{A})=\{\emptyset \}$.
Then, the set 
\[
\mathcal{P}(\Gamma,\mathcal{A})=\cup_{k=-1}^{n} \mathcal{P}_{k}(\Gamma,\mathcal{A})
\]
admits the structure of a simplicial poset by inclusion (see \cite{MMP}).
We denote this structure by $(\mathcal{P}(\Gamma,\mathcal{A}), \preceq)$.
Let $\mathcal{P}(Q)$ be the face poset of $Q$ (see Section \ref{sect2.2}) and $\mathcal{P}_{k}(Q)$ be the set of all $k$-dimensional faces,
where $-1\le k\le n$ and $\mathcal{P}_{-1}(Q)=\{\emptyset \}$.
Then, each element of $\mathcal{P}_{k}(\Gamma,\mathcal{A})$ is nothing but the graph of an element in $\mathcal{P}_{k}(Q)$.
This implies that the poset $(\mathcal{P}(\Gamma,\mathcal{A}), \preceq)$ is equivalent to the poset $(\mathcal{P}(Q),\preceq)$.
Therefore, we have the following lemma:
%%%%%%%%%%%%%%%%%%%%%%%%%%%%%%%%%%%%%%%%%%%%%%%%%%%%%%%%%%%%%%%%%%%%%%%%%%%
%Lemma 3.4
%%%%%%%%%%%%%%%%%%%%%%%%%%%%%%%%%%%%%%%%%%%%%%%%%%%%%%%%%%%%%%%%%%%%%%%%%%%
\begin{lemma}
\label{equiv<->equiv}
The following two statements are equivalent:
\begin{enumerate}
\item two manifolds with faces with omnioriented characteristic functions $(Q_{1},\lambda_{\mathcal{O}_{1}})$ and $(Q_{2},\lambda_{\mathcal{O}_{2}})$ are combinatorially equivalent;
\item their induced torus graphs $\Gamma(Q_{1},\lambda_{\mathcal{O}_{1}})$ and $\Gamma(Q_{2},\lambda_{\mathcal{O}_{2}})$ are equivalent.
\end{enumerate}
\end{lemma}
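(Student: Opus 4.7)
The plan is to route both implications through the face-poset identification stated just before the lemma, namely $(\mathcal{P}(\Gamma(Q,\lambda_{\mathcal{O}})),\preceq)\cong(\mathcal{P}(Q),\preceq)$, using as the key bridge the following observation extracted from \eqref{dual}: at every vertex $p$, the $n$ axial values $\{\mathcal{A}(e):e\in E_{p}(\Gamma)\}$ form the dual $\Z$-basis of $\algt_{\Z}^{*}$ to the $n$ characteristic vectors $\{\lambda_{\mathcal{O}}(F):p\in F\in\mathcal{F}(Q)\}$ in $\algt_{\Z}$, under the natural bijection that sends each edge $e\in E_{p}(\Gamma)$ to the unique facet at $p$ which does not contain $e$ (i.e., the ``normal facet'' $F_{n}$ in the excerpt).

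For $(1)\Rightarrow(2)$, I restrict the combinatorial equivalence $\widetilde{f}:\mathcal{P}(Q_{1})\to\mathcal{P}(Q_{2})$ to the 0- and 1-dimensional strata to obtain a graph isomorphism $f:\Gamma_{1}\to\Gamma_{2}$; since $\widetilde{f}$ preserves the incidence pattern, the edge-to-facet correspondence at each vertex is preserved, and combining this with the hypothesis $\lambda_{\mathcal{O}_{2}}\circ\widetilde{f}|_{\mathcal{F}}=\lambda_{\mathcal{O}_{1}}$, the defining dual relation \eqref{dual} for the axial function forces $\mathcal{A}_{2}\circ f|_{E}=\mathcal{A}_{1}$.

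For $(2)\Rightarrow(1)$, I first show that the connection $\nabla$ on a torus graph is uniquely determined by its axial function: condition $3(c)$ of Proposition \ref{connection} together with the $\Z$-basis property of the axial values at each terminal vertex $q$ (and the fact that $\pm\mathcal{A}(pq)$ is itself among those basis vectors, via $\bar{pq}\in E_{q}(\Gamma)$) pins down $\nabla_{pq}(e)$ uniquely. Hence any equivalence $f:(\Gamma_{1},\mathcal{A}_{1})\to(\Gamma_{2},\mathcal{A}_{2})$ intertwines the two connections, and thus carries $\nabla$-closed $k$-valent subgraphs to $\nabla$-closed $k$-valent subgraphs. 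Through the identification $\mathcal{P}_{k}(\Gamma_{i},\mathcal{A}_{i})\cong\mathcal{P}_{k}(Q_{i})$ this yields an order-preserving bijection $\widetilde{f}:\mathcal{P}(Q_{1})\to\mathcal{P}(Q_{2})$. To verify $\lambda_{\mathcal{O}_{2}}\circ\widetilde{f}|_{\mathcal{F}}=\lambda_{\mathcal{O}_{1}}$, I fix any vertex $p$ and use that $\lambda_{\mathcal{O}_{1}}(F)$ for $F\ni p$ is the unique dual basis vector to $\mathcal{A}_{1}$ on the corresponding edge in $E_{p}(\Gamma_{1})$; since $f$ preserves both the vertex-edge-facet incidence (via $\widetilde{f}$) and the axial values, the same dual basis relation at $f(p)$ gives $\lambda_{\mathcal{O}_{2}}(\widetilde{f}(F))=\lambda_{\mathcal{O}_{1}}(F)$.

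The main obstacle I anticipate is the uniqueness of the connection in the first step of $(2)\Rightarrow(1)$: one needs to argue that if $e'\in E_{q}(\Gamma)$ satisfies $\mathcal{A}(e')\equiv\mathcal{A}(e)\pmod{\mathcal{A}(pq)}$, then $e'$ is uniquely determined, which in turn is what ensures that the reconstructed $\widetilde{f}$ is well-defined at all face dimensions (not just at facets and vertices). A secondary subtlety is the consistency of $\lambda_{\mathcal{O}_{2}}(\widetilde{f}(F))$ when read off from the dual basis at different vertices of the same facet $F$; this is automatic because $\lambda_{\mathcal{O}_{i}}(F)$ is a single element of $\algt_{\Z}$ by definition, but it needs to be mentioned to close the argument cleanly.
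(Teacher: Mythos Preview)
Your proposal is correct and follows the route the paper itself indicates: the paper does not write out a proof but simply declares the lemma as a consequence of the identification $(\mathcal{P}(\Gamma(Q,\lambda_{\mathcal{O}})),\preceq)\cong(\mathcal{P}(Q),\preceq)$ established in the preceding paragraph, and your argument makes that implication explicit. The one substantive step you add beyond what the paper states---that the connection $\nabla$ is uniquely determined by the axial function, so that a torus-graph equivalence in the sense of Definition~\ref{def_torus-graph} automatically intertwines connections and hence induces a bijection on $\nabla$-closed subgraphs---is exactly the point needed to pass from $(2)$ to a face-poset isomorphism, and your justification (the axial values at $q$ form a $\Z$-basis containing $\pm\mathcal{A}(pq)$, so their residues modulo $\mathcal{A}(pq)$ are distinct) is sound.
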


By Lemma \ref{omni->nonomni}, Theorem \ref{Wi2} and Lemma \ref{equiv<->equiv}, we have the following corollary:
%%%%%%%%%%%%%%%%%%%%%%%%%%%%%%%%%%%%%%%%%%%%%%%%%%%%%%%%%%%%%%%%%%%%%%%%%%%
%Corollary 3.5
%%%%%%%%%%%%%%%%%%%%%%%%%%%%%%%%%%%%%%%%%%%%%%%%%%%%%%%%%%%%%%%%%%%%%%%%%%%
\begin{corollary}
\label{key2}
Let $(M_{1},T)$ and $(M_{2},T)$ be $6$-dimensional simply connected torus manifolds with vanishing odd degree cohomology.
Then, the following statements are equivalent:
\begin{enumerate}
\item $(M_{1},T)$ and $(M_{2},T)$ are equivariantly diffeomorphic;
\item their orbit spaces, i.e., $3$-dimensional disks with the structures of manifolds with faces, with characteristic functions 
$(M_{1}/T,\lambda_{1})$ and $(M_{2}/T,\lambda_{2})$ are combinatorially equivalent;
\item there are omnioriented characteristic functions $\lambda_{\mathcal{O}_{1}}$ and $\lambda_{\mathcal{O}_{2}}$ such that 
their induced $3$-valent torus graphs $\Gamma(M_{1}/T,\lambda_{\mathcal{O}_{1}})$ and $\Gamma(M_{2}/T,\lambda_{\mathcal{O}_{2}})$ are equivalent.
\end{enumerate}
\end{corollary}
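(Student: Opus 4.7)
The plan is to assemble Corollary \ref{key2} from the earlier results of this section together with Wiemeler's theorem and Masuda--Panov's corollary. Since each of the three statements has already been related, in adjacent pairs, to one another in the preceding lemmas, the corollary is essentially an exercise in chaining equivalences; the main task is to verify that the quantifiers on the omniorientations line up correctly in the transition between (2) and (3).

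First I would handle the equivalence (1) $\iff$ (2). By Corollary \ref{disk} the orbit spaces $M_{1}/T$ and $M_{2}/T$ are $3$-dimensional disks equipped with the manifold with faces structure induced by the locally standard $T$-action (locally standardness comes from Theorem \ref{2-equiv}). Their characteristic functions $\lambda_{1},\lambda_{2}:\mathcal{F}(M_{i}/T)\to \algt_{\Z}/\{\pm 1\}$ are defined as in Section \ref{sect2.3}. Wiemeler's Theorem \ref{Wi2} then directly gives $(1)\iff (2)$.

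Next I would show $(2) \iff (3)$ by sandwiching the combinatorial equivalence through the omnioriented version. For $(2)\Rightarrow (3)$: assuming $(M_{1}/T,\lambda_{1})$ and $(M_{2}/T,\lambda_{2})$ are combinatorially equivalent, the second half of Lemma \ref{omni->nonomni} furnishes compatible omniorientations $\mathcal{O}_{1},\mathcal{O}_{2}$ so that $(M_{1}/T,\lambda_{\mathcal{O}_{1}})$ and $(M_{2}/T,\lambda_{\mathcal{O}_{2}})$ are combinatorially equivalent, and then Lemma \ref{equiv<->equiv} translates this into equivalence of the induced $3$-valent torus graphs $\Gamma(M_{i}/T,\lambda_{\mathcal{O}_{i}})$, which are $3$-valent because $\dim(M_{i}/T)=3$ forces $|E_{p}(\Gamma)|=3$ at every vertex. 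For $(3)\Rightarrow (2)$: given omnioriented characteristic functions $\lambda_{\mathcal{O}_{1}},\lambda_{\mathcal{O}_{2}}$ with equivalent induced torus graphs, Lemma \ref{equiv<->equiv} yields combinatorial equivalence of $(M_{1}/T,\lambda_{\mathcal{O}_{1}})$ and $(M_{2}/T,\lambda_{\mathcal{O}_{2}})$, and then the first half of Lemma \ref{omni->nonomni} gives combinatorial equivalence of the induced $(M_{i}/T,\lambda_{i})$, which is exactly $(2)$.

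The only subtlety worth flagging is that in (2) the characteristic functions are the canonical, sign-free $\lambda_{i}$ determined by $M_{i}$, while in (3) the omniorientations $\mathcal{O}_{i}$ are existentially quantified (``there are $\lambda_{\mathcal{O}_{1}}$ and $\lambda_{\mathcal{O}_{2}}$''). Lemma \ref{omni->nonomni} is designed to handle exactly this mismatch, so no real obstacle arises, but I would make this quantifier switch explicit to avoid any ambiguity. With these checks in place the three statements form a triangle of implications and the corollary follows.
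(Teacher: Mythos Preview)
Your proof is correct and follows exactly the route the paper indicates: the corollary is stated as an immediate consequence of Lemma \ref{omni->nonomni}, Theorem \ref{Wi2}, and Lemma \ref{equiv<->equiv}, and you have assembled these in the right order (with the appropriate care about the existential quantifier on the omniorientations). There is nothing to add.
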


Therefore, in order to prove our main theorem (Theorem \ref{main-1}), 
it is enough to classify all $3$-valent torus graphs $(\Gamma,\mathcal{A})$, induced from $(M,\mathcal{O})$, up to equivalent.

%%%%%%%%%%%%%%%%%%%%%%%%%%%%%%%%%%%%%%%%%%%%%%%%%%%%%%%%%%%%%%%%%%%%%%%%%%%
%section 4 (6-dim)
%%%%%%%%%%%%%%%%%%%%%%%%%%%%%%%%%%%%%%%%%%%%%%%%%%%%%%%%%%%%%%%%%%%%%%%%%%%
\section{Basic six-dimensional torus manifolds}
\label{sect4}

Let $(M,T)$ be a simply connected, $6$-dimensional torus manifold with $H^{odd}(M)=0$,
and $(\Gamma_{M},\mathcal{A}_{M})(=(\Gamma,\mathcal{A}))$ be its torus graph induced by some omniorientation.
As a preliminary to prove the main theorem (Theorem \ref{main-1}),
in this section, 
we will introduce some of basic torus graphs $(\Gamma,\mathcal{A})$ and their corresponding 
$6$-dimensional torus manifolds $(M,T)$.

%%%%%%%%%%%%%%%%%%%%%%%%%%%%%%%%%%%%%%%%%%%%%%%%%%%%%%%%%%%%%%%%%%%%%%%%%%%
%section 4.1
%%%%%%%%%%%%%%%%%%%%%%%%%%%%%%%%%%%%%%%%%%%%%%%%%%%%%%%%%%%%%%%%%%%%%%%%%%%
\subsection{$6$-sphere}
\label{sect4.1}
Because the induced torus graphs from $(M,T)$ are $3$-valent,
if there is a $3$-multiple edge, i.e., three edges that are incident to the same two vertices, 
then it follows from Proposition \ref{connection} that 
such torus graph must be the torus graph in Figure \ref{tg-S^6}.
(We denote the torus graph in Figure \ref{tg-S^6} as $(\Gamma_{sp},\mathcal{A}_{\alpha,\beta,\gamma})$.)
\begin{figure}[h]
\begin{center}
\includegraphics[width=100pt,clip]{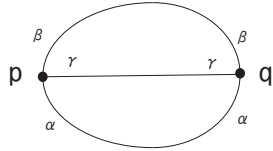}
\caption{The torus graph $(\Gamma_{sp},\mathcal{A}_{\alpha,\beta,\gamma})$, 
where $\alpha,\beta,\gamma\in \algt_{\Z}^{*}\simeq \Z^{3}$ are $\Z$-basis.}
\label{tg-S^6}
\end{center}
\end{figure}

Put $\alpha=k_{11}e_{1}+k_{12}e_{2}+k_{13}e_{3}$, $\beta=k_{21}e_{1}+k_{22}e_{2}+k_{23}e_{3}$ and 
$\gamma=k_{31}e_{1}+k_{32}e_{2}+k_{33}e_{3}$,
 by the standard basis $e_{1},e_{2},e_{3}$ in $\algt_{\Z}^{*}\simeq \Z^{3}$.
Then, the following equation holds: 
\begin{eqnarray}
\label{basis for S^6}
\det
\left(
\begin{array}{ccc}
k_{11} & k_{12} & k_{13} \\
k_{21} & k_{22} & k_{23} \\
k_{31} & k_{32} & k_{33} \\
\end{array}
\right)=\pm 1.
\end{eqnarray}

Let $S^6\subset \C^3\oplus \R$ be the unit sphere, i.e., 
the set $(z_{1},z_{2},z_{3},r)\in \C^3\oplus \R$ such that $|z_{1}|^2+|z_{2}|^2+|z_{3}|^2+r^2=1$.
Define the $T^3$-action on the first three complex coordinates in $S^{6}$ by 
\begin{eqnarray}
\label{def-action S^6}
(t_{1},t_{2},t_{3})(z_{1},z_{2},z_{3},r)\mapsto (\rho_{1}(t)z_{1},\rho_{2}(t)z_{2},\rho_{3}(t)z_{3},r)
\end{eqnarray}
where $t=(t_{1},t_{2},t_{3})\in T$ and $\rho_{i}:T\to S^1$, $i=1,2,3$, is a one-dimensional complex representation defined by 
\[
\rho_{i}(t_{1},t_{2},t_{3})=t_{1}^{k_{i1}}t_{2}^{k_{i2}}t_{3}^{k_{i3}}.
\]
Then, 
by choosing an appropriate omniorientation on $S^{6}$,
we have that its induced torus graph is equivalent to $(\Gamma_{sp},\mathcal{A}_{\alpha,\beta,\gamma})$.
Therefore, by using Corollary \ref{key2}, we have the following lemma:
%%%%%%%%%%%%%%%%%%%%%%%%%%%%%%%%%%%%%%%%%%%%%%%%%%%%%%%%%%%%%%%%%%%%%%%%%%%
%Lemma 4.1
%%%%%%%%%%%%%%%%%%%%%%%%%%%%%%%%%%%%%%%%%%%%%%%%%%%%%%%%%%%%%%%%%%%%%%%%%%%
\begin{lemma}
\label{S^6}
Let $(M,\mathcal{O})$ be an omnioriented $6$-dimensional simply connected torus manifold with $H^{odd}(M)=0$.
If its induced torus graph is $(\Gamma_{sp},\mathcal{A}_{\alpha,\beta,\gamma})$, then
$(M,T)$ is equivariantly diffeomorphic to one of $(S^{6},T)$ defined by \eqref{def-action S^6}.
\end{lemma}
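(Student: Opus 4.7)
The plan is to apply Corollary \ref{key2}: it suffices to exhibit the $S^6$ defined by \eqref{def-action S^6} as a simply connected $6$-dimensional torus manifold with $H^{odd}=0$ whose induced torus graph, for a suitable omniorientation, is equivalent to $(\Gamma_{sp},\mathcal{A}_{\alpha,\beta,\gamma})$. Given such an identification, any $M$ with the same induced torus graph must be equivariantly diffeomorphic to this $S^6$ by the third equivalent condition in Corollary \ref{key2}.

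First I would verify that the action \eqref{def-action S^6} gives $S^6$ the structure of a torus manifold. The $T^3$-fixed locus is exactly the two poles $p_{\pm}=(0,0,0,\pm1)$, and effectiveness of the action follows from \eqref{basis for S^6} since the matrix of weights has determinant $\pm 1$. Since $S^6$ is simply connected and $H^{odd}(S^6)=0$, Theorem \ref{2-equiv} guarantees that the action is locally standard and that $S^6/T$ is a face-acyclic $3$-manifold with corners; by Corollary \ref{disk} it is in fact a $3$-disk. The three characteristic submanifolds are the $4$-spheres $M_i=S^6\cap\{z_i=0\}$ for $i=1,2,3$, and each of them contains both poles.

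Next I would compute the characteristic data. The circle subgroup $T_i\subset T^3$ fixing $M_i$ is $\bigcap_{j\ne i}\ker\rho_j$, which is generated by the primitive vector $a_i\in\algt_{\Z}$ characterized by $\langle\alpha,a_1\rangle=1$, $\langle\beta,a_1\rangle=\langle\gamma,a_1\rangle=0$, and similarly for $a_2,a_3$; in short, $\{a_1,a_2,a_3\}$ is the $\Z$-basis dual to $\{\alpha,\beta,\gamma\}$, and condition \eqref{basis for S^6} is exactly what makes this well defined over $\Z$. Choosing the omniorientation $\mathcal{O}$ so that $\lambda_{\mathcal{O}}(F_i)=a_i$ for the facet $F_i=\pi(M_i)$ of $Q=S^6/T$, the defining relations \eqref{dual} on the three edges of the one-skeleton of $Q$ (each of which is the intersection of two of the three facets and connects the two vertices corresponding to $p_{\pm}$) yield axial function values exactly $\alpha$, $\beta$, $\gamma$. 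Thus $(\Gamma_{S^6},\mathcal{A}_{S^6})\simeq(\Gamma_{sp},\mathcal{A}_{\alpha,\beta,\gamma})$, and Corollary \ref{key2} concludes the argument.

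The potentially delicate step is verifying that the orbit space really has the combinatorial structure of a $3$-disk whose graph is the $3$-multiple edge between two vertices, i.e., that the three characteristic submanifolds do meet precisely at the two fixed points and nowhere else along the one-skeleton; this is a direct check using the explicit form of the action on $\C^3\oplus\R$, where the one-skeleton of $Q$ is the image under $\pi$ of the $\R$-axis together with the coordinate circles. Once this is in place, the rest of the argument is formal.
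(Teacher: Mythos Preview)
Your proposal is correct and follows essentially the same route as the paper: the paper simply notes that with an appropriate omniorientation the $T^3$-action \eqref{def-action S^6} on $S^6$ has induced torus graph $(\Gamma_{sp},\mathcal{A}_{\alpha,\beta,\gamma})$ and then invokes Corollary~\ref{key2}. You have just filled in the details the paper leaves implicit (effectiveness from \eqref{basis for S^6}, the identification of the characteristic circles $T_i$ with the dual basis to $\alpha,\beta,\gamma$, and the resulting axial function values).
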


%%%%%%%%%%%%%%%%%%%%%%%%%%%%%%%%%%%%%%%%%%%%%%%%%%%%%%%%%%%%%%%%%%%%%%%%%%%
%section 4.2
%%%%%%%%%%%%%%%%%%%%%%%%%%%%%%%%%%%%%%%%%%%%%%%%%%%%%%%%%%%%%%%%%%%%%%%%%%%
\subsection{$S^4$-bundles over $S^2$}
\label{sect4.2}

Assume that a $3$-valent torus graph $(\Gamma,\mathcal{A})$ does not have $3$-multiple edges but have multiple edges,
i.e., two edges that are incident to the same two vertices.
In this section, we classify the easiest case of such torus graphs.

Because $\Gamma$ is a one-skeleton of $3$-dimensional manifold with faces $Q$, the number of vertices $|V(\Gamma)|\ge 4$.
Assume that $|V(\Gamma)|=4$. 
Then, we can easily check that such torus manifold is the one-skeleton of the $3$-simplex (see Figure \ref{tg-CP^3} in Section \ref{sect4.3}) 
or the graph drawn in Figure \ref{tg-bundles}, say $\Gamma_{S}$.
As is well known that the torus manifold whose torus graph is 
the one-skeleton of the $3$-simplex is equivariantly diffeomorphic to the complex projective space with some $T$-action 
(see e.g. \cite{DaJa}, and also see Figure \ref{tg-CP^3} in Section \ref{sect4.3}).
So, we only study the torus manifold which induces the graph $\Gamma_{S}$.
Because $Q$ is homeomorphic to $D^{3}$, we may regard
$Q$ whose one-skeleton is $\Gamma_{S}$ as the product of $D^{2}\times I$,
where $D^{2}$ is the $2$-dimensional disk and $I$ is the interval.
By considering all functions on facets of $Q$ which satisfies \eqref{smooth-condition2},
we can classify all omnioriented characteristic functions $\lambda_{\mathcal{O}}$ on $Q$.
Then, by the way to induce the axial function $\mathcal{A}_{S}$ from $(Q,\lambda_{\mathcal{O}})$ demonstrated in Section \ref{sect3},
we can obtain all possible axial functions on $\Gamma_{S}$ as shown in Figure \ref{tg-bundles}.

\begin{figure}[h]
\begin{center}
\includegraphics[width=150pt,clip]{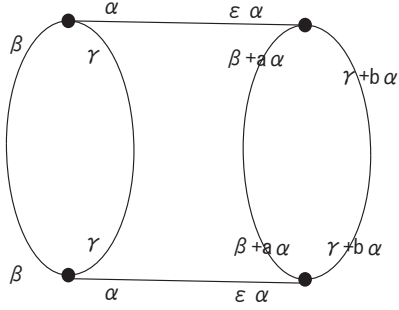}
\caption{The torus graph $(\Gamma_{S},\mathcal{A}_{S})=(\Gamma_{S},\mathcal{A}_{\alpha,\beta,\gamma}^{\epsilon,a,b})$, where $\epsilon=\pm 1$ and $a,b\in \Z$ and 
$\alpha,\beta,\gamma\in \algt_{\Z}^{*}$ are $\Z$-basis of $\algt_{\Z}^{*}$.}
\label{tg-bundles}
\end{center}
\end{figure}
 
The torus graph $(\Gamma_{S},\mathcal{A}_{S})$ in Figure \ref{tg-bundles} can be induced from an $S^4$-bundle over $S^2$ as follows.

First, by choosing $\epsilon=\pm 1$, we may define two free $T^{1}$-actions on $S^3\subset \C^2$ as follows:
\[
(w,z)\mapsto (t^{-1}w,t^\epsilon z).
\]
We denote $S^3$ with the above $T^1$-action by $S^3_{\epsilon}$. 
Note that $S^{3}_{\epsilon}/T^{1}$ is diffeomorphic to the $2$-sphere $S^2$, and a complex line bundle over $S^2$ can be denoted by 
\[
S^3_{\epsilon}\times_{T^{1}} \C_{k},
\]
where $\C_{k}$ is the complex $1$-dimensional $T^1$-representation space by $k$-times rotation for some $k\in \Z$.
Let $S^3_{\epsilon}\times_{T^1}\R$ be the trivial real line bundle over $S^2$.
Take the unit sphere bundle of the following Whitney sum of three vector bundles for $a,b\in \Z$: 
\[
S^3_{\epsilon}\times_{T^{1}} (\C_{a}\oplus \C_{b}\oplus \R).
\]
Then, we obtain the $S^{4}$-bundle over $S^{2}$ denoted by
\[
M(\epsilon,a,b)=S^3_{\epsilon}\times_{S^{1}} S(\C_{a}\oplus \C_{b}\oplus \R),
\]
for $\epsilon=\pm 1$, $a,b\in \Z$.
Namely, we can identify elements in $M(\epsilon,a,b)$ by
\[
[(w,z),(x,y,r)]=[(t^{-1}w,t^{\epsilon}z),(t^{a}x,t^{b}y,r)]
\]
for any $t\in T^{1}$
such that $|w|^2+|z|^2=1$ and $|x|^2+|y|^2+r^2=1$.
Define a $T^{3}$-action on $M(\epsilon,a,b)$ by
\[
[(w,z),(x,y,r)]\mapsto [(t_{1}w,z),(t_{2}x,t_{3}y,r)], 
\]
where $(t_{1},t_{2},t_{3})\in T^3$.
Fix an omniorientation on 
$M(\epsilon,a,b)$ by the induced orientations from $S^{3}_{\epsilon}\times S^{4}\subset \C^{2}\times (\C\oplus\C\oplus\R)$.
Then, considering the tangential representations around each fixed point, 
it is easy to check that the induced torus graph is $(\Gamma_{S},\mathcal{A}_{e_{1},e_{2},e_{3}}^{\epsilon,a,b})$, where $e_{1},e_{2},e_{3}$ are the 
standard basis of $\algt_{\Z}\simeq \Z^{3}$.
Therefore, by taking the appropriate automorphism of $T^{3}$, we can construct each torus graph $(\Gamma_{S},\mathcal{A}_{S})$ in Figure \ref{tg-bundles} from $M(\epsilon,a,b)$.
Note that if $\epsilon=-1$ and $a=b$, then this is nothing but one of the torus manifolds 
which appeared in the classifications of torus manifolds with codimension one extended actions in \cite{Ku11}. 

By the argument as above and Corollary \ref{key2}, we establish the following lemma:
%%%%%%%%%%%%%%%%%%%%%%%%%%%%%%%%%%%%%%%%%%%%%%%%%%%%%%%%%%%%%%%%%%%%%%%%%%%
%Lemma 4.2
%%%%%%%%%%%%%%%%%%%%%%%%%%%%%%%%%%%%%%%%%%%%%%%%%%%%%%%%%%%%%%%%%%%%%%%%%%%
\begin{lemma}
\label{S^4-S^2(geom)}
Let $(M,\mathcal{O})$ be an omnioriented $6$-dimensional simply connected torus manifold with $H^{odd}(M)=0$.
If its induced torus graph has $4$-vertices, then
$(M,T)$ is equivariantly diffeomorphic to one of the followings:
\begin{enumerate}
\item $\C P^{3}$ with the standard $T^{3}$-action up to automorphism of $T^{3}$;
\item $M(\epsilon,a,b)$ for some $\epsilon=\pm 1$ and $a,b\in \Z$.
\end{enumerate}
\end{lemma}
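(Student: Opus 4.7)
The plan is to reduce the geometric classification to a graph-theoretic one using Corollary \ref{key2}, and then to show that the two combinatorial types of orbit spaces on four vertices are exactly realized by the two families in the lemma.

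First I would argue that if the induced torus graph $(\Gamma, \mathcal{A})$ has exactly four vertices, then the one-skeleton $\Gamma$ of $Q = M/T \cong D^3$ is either the $1$-skeleton of the $3$-simplex or the graph $\Gamma_S$ drawn in Figure \ref{tg-bundles}. This is because $Q$ is a $3$-dimensional face acyclic manifold with faces by Corollary \ref{disk} and Theorem \ref{2-equiv}, with four vertices and hence four facets meeting in a $3$-valent graph. A case analysis (either no multi-edges, giving the tetrahedral skeleton, or exactly one pair of facets meeting in two edges, giving $\Gamma_S$; triple edges are excluded since they force the $3$-multi-edge graph of Section \ref{sect4.1} which has only two vertices) yields exactly these two possibilities.

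Next, in the tetrahedral case, the omnioriented characteristic function $\lambda_{\mathcal{O}}$ assigns four vectors $a_1, a_2, a_3, a_4 \in \algt_\Z$ to the four facets, with any three forming a $\Z$-basis by \eqref{smooth-condition2}; by a lattice automorphism we may take $a_1,a_2,a_3$ to be the standard basis and $a_4 = \pm e_1 \pm e_2 \pm e_3$. A standard calculation shows this orbit data matches that of $\C P^3$ with the standard $T^3$-action (twisted by an automorphism of $T^3$); applying Corollary \ref{key2} gives equivariant diffeomorphism with $\C P^3$.

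For the case $\Gamma = \Gamma_S$, I would enumerate all omnioriented characteristic functions on the corresponding $Q = D^2 \times I$ satisfying \eqref{smooth-condition2} at each of the four vertices. A direct determinant computation, using the fact that two opposite facets (the two ``lids'' of $D^2\times I$) meet every vertex, produces exactly the three-parameter family $\mathcal{A}_{\alpha,\beta,\gamma}^{\epsilon,a,b}$ with $\epsilon \in \{\pm 1\}$ and $a,b\in\Z$ displayed in Figure \ref{tg-bundles}. In parallel, I would verify by a local tangential-representation computation around the four fixed points of $M(\epsilon, a, b) = S^3_\epsilon \times_{T^1} S(\C_a \oplus \C_b \oplus \R)$ that its induced torus graph is precisely $(\Gamma_S, \mathcal{A}_{e_1,e_2,e_3}^{\epsilon, a,b})$. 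Composing with a lattice automorphism of $\algt_\Z^*$ sending $e_1,e_2,e_3$ to an arbitrary $\Z$-basis $\alpha,\beta,\gamma$ realizes every member of the family, and one final appeal to Corollary \ref{key2} completes the proof.

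The main obstacle I expect is the combinatorial bookkeeping in the second case: verifying that the determinant conditions \eqref{smooth-condition2} at all four vertices of $\Gamma_S$ force the labels to be exactly of the form shown, with no further free parameters and no additional families missed. In particular, one must be careful that the two ``bigon'' edges share a common normal facet (the two lids) and therefore their axial-function values are constrained to be parallel (giving the common direction $\gamma$ and forcing the $\epsilon$ flip), while the compatibility \eqref{dual} with the connection $\nabla$ pins down the integer parameters $a$ and $b$. Once this bookkeeping is done, matching against $M(\epsilon, a, b)$ is a routine check of tangential weights.
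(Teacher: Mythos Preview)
Your proposal is correct and follows essentially the same line as the paper: the discussion preceding Lemma~\ref{S^4-S^2(geom)} in Section~\ref{sect4.2} does exactly this enumeration of the two four-vertex graphs, classifies the omnioriented characteristic functions on $Q\approx D^{2}\times I$, matches them against the explicit models $M(\epsilon,a,b)$, and then invokes Corollary~\ref{key2}. One small caution on your bookkeeping paragraph: in the bigon-prism $D^{2}\times I$ it is the two \emph{side} facets (not the two lids) that meet all four vertices, so the roles of ``lids'' and ``sides'' in your description are reversed, though this does not affect the validity of the method.
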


%%%%%%%%%%%%%%%%%%%%%%%%%%%%%%%%%%%%%%%%%%%%%%%%%%%%%%%%%%%%%%%%%%%%%%%%%%%
%section 4.3
%%%%%%%%%%%%%%%%%%%%%%%%%%%%%%%%%%%%%%%%%%%%%%%%%%%%%%%%%%%%%%%%%%%%%%%%%%%
\subsection{$6$-dimensional quasitoric manifolds}
\label{sect4.3}

Assume that there are no multiple edges in a $3$-valent torus graph $(\Gamma,\mathcal{A})$, i.e., there are no two edges
that are incident to the same two vertices.
A graph $\Gamma$ is called a {\it simple} if $\Gamma$ does not have both of multiple edges and loops.
In this and Section \ref{sect5}, we study simple torus graphs which can be realized as the one-skeleton of manifold with faces homeomorphic to $D^{3}$.

The typical example of such torus manifolds whose torus graphs are simple is a quasitoric manifold
(introduced by Davis-Januszkiewicz in \cite{DaJa} (also see \cite{BuPa})).
A {\it quasitoric manifold} is defined by 
a torus manifold whose orbit space is a {\it simple convex polytope}, i.e., a convex polytope admitting the structure of a manifold with faces.
For example, the complex projective space $\C P^n$ with the standard $T^n$-action 
is the quasitoric manifold whose orbit space is the $n$-dimensional simplex.
The Figure \ref{tg-CP^3} shows the torus graph induced from $(\C P^3,\mathcal{O}_{\C})$, i.e., the omniorientation $\mathcal{O}_{\C}$ induced from the standard complex structure on $\C P^3$ and the standard $T$-action on $\C P^3$.
\begin{figure}[h]
\begin{center}
\includegraphics[width=150pt,clip]{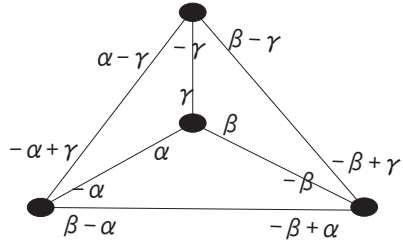}
\caption{Torus graph induced from $(\C P^3,\mathcal{O}_{\C})$.}
\label{tg-CP^3}
\end{center}
\end{figure}

We next characterize when torus graphs are induced from simple convex polytopes, i.e., induced from quasitoric manifolds.
The Steinitz theorem (see \cite[Chapter 4]{Zi}) tells us that
a graph $\Gamma$ is the one skeleton of a $3$-dimensional convex polytope if and only if 
$\Gamma$ is a simple, planner and $3$-connected graph, 
where $\Gamma$ is called a {\it $3$-connected} graph if it remains connected whenever fewer than $3$ vertices are removed.
It easily follows from the Steinitz theorem that we have the following lemma:
%%%%%%%%%%%%%%%%%%%%%%%%%%%%%%%%%%%%%%%%%%%%%%%%%%%%%%%%%%%%%%%%%%%%%%%%%%%
%Lemma 4.3 (Steinitz)
%%%%%%%%%%%%%%%%%%%%%%%%%%%%%%%%%%%%%%%%%%%%%%%%%%%%%%%%%%%%%%%%%%%%%%%%%%%
\begin{lemma}
\label{polytope}
Let $Q$ be a manifold with faces and $\Gamma$ be its graph.
Assume that $Q$ is homeomorphic to the $3$-disk $D^{3}$ and there are no multiple edges.
Then, the following two statements are equivalent:
\begin{enumerate}
\item $Q$ is combinatorially equivalent to a $3$-dimensional simple convex polytope $P$;
\item $\Gamma$ is a $3$-connected graph.
\end{enumerate}
\end{lemma}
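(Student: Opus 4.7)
The plan is to reduce both implications to Steinitz's theorem, using the fact that the boundary of $Q$ is a 2-sphere to supply the planarity hypothesis.

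The implication (1)$\Rightarrow$(2) is immediate: if $Q$ is combinatorially equivalent to a simple 3-polytope $P$, then $\Gamma$ coincides with the 1-skeleton of $P$, which by Steinitz's theorem is 3-connected.

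For (2)$\Rightarrow$(1), I would first verify that $\Gamma$ satisfies the three hypotheses of Steinitz's theorem. Simplicity follows from the no-multiple-edges assumption together with the observation that a loop cannot arise at a vertex $p$: the three facets through $p$ meet pairwise along three distinct edges emanating from $p$, by the defining property of a manifold with faces. Planarity follows from $Q\cong D^{3}$, hence $\partial Q\cong S^{2}$, since the facets, edges, and vertices of $Q$ furnish a CW decomposition of $\partial Q$ whose 1-skeleton is precisely $\Gamma$. Three-connectivity is hypothesis (2). Steinitz's theorem then produces a simple 3-dimensional convex polytope $P$ whose 1-skeleton is isomorphic to $\Gamma$; simplicity of $P$ reflects the fact that $\Gamma$ is 3-valent, as each vertex of $Q$ lies on exactly three facets.

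The main obstacle is to upgrade this isomorphism of 1-skeleta to an isomorphism of the full face posets $\mathcal{P}(Q)\cong\mathcal{P}(P)$. The 2-dimensional faces of $Q$ correspond bijectively to the connected components of $\partial Q\setminus\Gamma$ inside $\partial Q\cong S^{2}$, and similarly the 2-faces of $P$ correspond to the components of $\partial P\setminus\Gamma$ in $\partial P\cong S^{2}$. To match these two decompositions I would invoke Whitney's uniqueness theorem: a 3-connected planar graph admits an essentially unique embedding into $S^{2}$. This forces the collection of 2-faces of $Q$ and of $P$ to match as subsets of $\mathcal{F}(Q)$ and $\mathcal{F}(P)$ respectively, and since each of $Q$ and $P$ is its own unique top-dimensional face, the desired poset isomorphism follows and (1) is established.
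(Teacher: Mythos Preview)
Your proof is correct and follows the same approach as the paper, which simply asserts that the lemma ``easily follows from the Steinitz theorem'' without further detail. Your write-up is more complete: in particular, your invocation of Whitney's uniqueness theorem to promote the $1$-skeleton isomorphism to a full face-poset isomorphism makes explicit a step the paper leaves implicit.
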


Therefore, 
together with Corollary \ref{key2}, we have the following fact:
%%%%%%%%%%%%%%%%%%%%%%%%%%%%%%%%%%%%%%%%%%%%%%%%%%%%%%%%%%%%%%%%%%%%%%%%%%%
%Lemma 4.4
%%%%%%%%%%%%%%%%%%%%%%%%%%%%%%%%%%%%%%%%%%%%%%%%%%%%%%%%%%%%%%%%%%%%%%%%%%%
\begin{lemma}
\label{quasi-toru}
Let $(M,\mathcal{O})$ be an omnioriented $6$-dimensional simply connected torus manifold with $H^{odd}(M)=0$.
Then, the following two statements are equivalent:
\begin{enumerate}
\item $(M,T)$ is equivariantly diffeomorphic to a quasitoric manifold;
\item its induced torus graph $\Gamma$ is a $3$-connected graph with no multiple edges.
\end{enumerate}
\end{lemma}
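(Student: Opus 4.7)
The plan is to prove the two implications separately, translating between quasitoric manifolds, simple convex polytopes, and torus graphs via the results already established, especially Corollary~\ref{key2} and Lemma~\ref{polytope}.

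For the implication (1) $\Rightarrow$ (2), I would start by observing that if $M$ is equivariantly diffeomorphic to a quasitoric manifold, then by definition its orbit space $M/T$ is combinatorially equivalent to a $3$-dimensional simple convex polytope $P$. The one-skeleton of any such $P$ is $3$-connected and has no multiple edges, by the (easy direction of the) Steinitz theorem already invoked in Lemma~\ref{polytope}. Since $\Gamma$ is the one-skeleton of $M/T$, which is combinatorially equivalent to $P$, it inherits both properties.

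For the reverse implication (2) $\Rightarrow$ (1), I would proceed as follows. By Corollary~\ref{disk}, the orbit space $Q = M/T$ is homeomorphic to $D^{3}$ and is a manifold with faces whose one-skeleton is $\Gamma$. The assumption on $\Gamma$ together with Lemma~\ref{polytope} then produces a combinatorial equivalence $\varphi\colon Q \approx_{c} P$ for some simple convex polytope $P$. Pulling the characteristic function $\lambda$ of $M$ along $\varphi$ yields a function $\lambda'\colon \mathcal{F}(P)\to \algt_{\Z}/\{\pm 1\}$; since $\varphi$ preserves the incidence pattern around every vertex, $\lambda'$ automatically satisfies the nonsingularity condition \eqref{smooth-condition} at each vertex of $P$ and is therefore a characteristic function on $P$. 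The Davis--Januszkiewicz construction applied to $(P,\lambda')$ then produces a quasitoric manifold $M(P,\lambda')$, which is simply connected and satisfies $H^{odd}(M(P,\lambda'))=0$ by standard results. By construction $(Q,\lambda)$ and $(P,\lambda')$ are combinatorially equivalent, so Corollary~\ref{key2} delivers an equivariant diffeomorphism $M \cong M(P,\lambda')$, proving (1).

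I expect the main obstacle to be bookkeeping rather than conceptual: one must carefully verify that pulling $\lambda$ back along the poset isomorphism $\varphi$ genuinely produces a characteristic function (i.e.\ the determinant condition at each vertex of $P$ is preserved), and that the quasitoric manifold $M(P,\lambda')$ has the hypotheses needed to invoke Corollary~\ref{key2} on the quasitoric side. Both points are standard in the quasitoric literature (see \cite{DaJa,BuPa}), so the work amounts to cleanly citing the appropriate references rather than producing new arguments.
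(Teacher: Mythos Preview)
Your proposal is correct and follows essentially the same route as the paper, which simply records this lemma as an immediate consequence of Lemma~\ref{polytope} and Corollary~\ref{key2} without spelling out the details. Your write-up is just a more explicit unpacking of that one-line deduction.
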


%%%%%%%%%%%%%%%%%%%%%%%%%%%%%%%%%%%%%%%%%%%%%%%%%%%%%%%%%%%%%%%%%%%%%%%%%%%
%section 5
%%%%%%%%%%%%%%%%%%%%%%%%%%%%%%%%%%%%%%%%%%%%%%%%%%%%%%%%%%%%%%%%%%%%%%%%%%%
\section{Connected sum of torus graphs and other $6$-dimensional torus manifolds}
\label{sect5}

By the arguments in Section \ref{sect4},  only the following case remains:
the simply connected $6$-dimensional torus manifolds with $H^{odd}(M)=0$ 
whose induced torus graphs are simple but not $3$-connected.
Such torus manifolds can be constructed by using the connected sum of ``oriented'' torus graphs.
The purpose of this section is to introduce oriented torus graphs and their connected sum.

We first recall the equivariant connected sum of torus manifolds.
Let $M_{1}$, $M_{2}$ be $2n$-dimensional torus manifolds and $p\in M_{1}^{T}$, $q\in M_{2}^{T}$ be fixed points.
By using the slice theorem, we may take $T$-invariant open neighborhoods $U_{1}\subset M_{1}$ of $p$ and $U_{2}\subset M_{2}$ of $q$.
Assume that $U_{1}$ and $U_{2}$ are equivariantly diffeomorphic.
Then, $U_{1}\setminus\{p\}$ and $U_{2}\setminus\{q\}$ are equivariantly diffeomorphic to $S^{2n-1}\times I$, 
where $S^{2n-1}\subset \C^{n}$ 
with some effective $T^{n}$-action and $I=(-\epsilon, \epsilon)$ with the trivial $T^{n}$-action for some $\epsilon>0$.
Then, we glues these two neighborhood by $\varphi$ defined by the identity on $S^{2n-1}$ and the map $r\mapsto -r$ on $I$ for $r\in I$.
Namely, we can glue $M_{1}\setminus \{p\}$ and $M_{2}\setminus \{q\}$ by the following identification:
\begin{eqnarray}
\label{map-orie}
M_{1}\setminus\{p\}\supset U_{1}\setminus\{p\}\stackrel{\simeq}{\longrightarrow} S^{2n-1}\times I \stackrel{\varphi}{\longrightarrow} 
S^{2n-1}\times I \stackrel{\simeq}{\longrightarrow}  U_{2}\setminus\{q\}\subset M_{2}\setminus\{q\}.
\end{eqnarray}
The $T^{n}$-manifold obtained by this way is denoted by $M_{1} \# M_{2}$ or $M_{1} \#_{(p,q)} M_{2}$ (if we emphasize fixed points $p\in M_{1}^{T}$ and 
$q\in M_{2}^{T}$).
Because each torus manifold has more than two fixed points, $M_{1} \# M_{2}$ is again a torus manifold.
We call this operation an {\it equivariant connected sum}.
The following lemma holds:
%%%%%%%%%%%%%%%%%%%%%%%%%%%%%%%%%%%%%%%%%%%%%%%%%%%%%%%%%%%%%%%%%%%%%%%%%%%
%Lemma 5.1
%%%%%%%%%%%%%%%%%%%%%%%%%%%%%%%%%%%%%%%%%%%%%%%%%%%%%%%%%%%%%%%%%%%%%%%%%%%
\begin{lemma}
\label{lem-easy}
If two torus manifolds $M_{1}$ and $M_{2}$ are simply connected and $H^{odd}(M_{1})=H^{odd}(M_{2})=0$,
then $M_{1} \# M_{2}$ is also simply connected and $H^{odd}(M_{1} \# M_{2})=0$.
\end{lemma}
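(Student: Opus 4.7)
The plan is to compute $\pi_1$ and $H^{odd}$ of $M_1 \# M_2$ directly from the Seifert--van Kampen and Mayer--Vietoris theorems applied to the natural cover $M_1 \# M_2 = U \cup V$, where $U$ and $V$ are open neighborhoods deformation retracting onto $M_1 \setminus \{p\}$ and $M_2 \setminus \{q\}$, respectively, and $U \cap V$ deformation retracts onto the gluing sphere $S^{2n-1}$ (here $2n \geq 4$, in fact $2n = 6$ in our setting). Before invoking these, I would establish the preliminary fact that puncturing does not affect the invariants of interest: for a closed simply connected $2n$-manifold $M$ with $n \geq 2$ and $H^{odd}(M)=0$, one has $\pi_1(M \setminus \{pt\}) = \pi_1(M) = 1$ and $H^k(M \setminus \{pt\}) \cong H^k(M)$ for $0 \leq k < 2n$, while $H^{2n}(M \setminus \{pt\}) = 0$. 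This follows from the long exact sequence of the pair $(M, M \setminus \{pt\})$ together with the excision isomorphism $H^k(M, M \setminus \{pt\}) \cong H^k(D^{2n}, S^{2n-1})$, and from van Kampen applied to $M = (M \setminus \{pt\}) \cup B$, whose intersection $S^{2n-1}$ is simply connected as $n \geq 2$.

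For simple connectedness of $M_1 \# M_2$, I would apply Seifert--van Kampen to $U \cup V$: since $\pi_1(U) = \pi_1(V) = 1$ by the preliminary fact, and $U \cap V \simeq S^{2n-1}$ is path-connected, the amalgamated product immediately gives $\pi_1(M_1 \# M_2) = 1$. For the cohomological statement, I would feed the same cover into Mayer--Vietoris. In each odd degree $k$ with $1 < k < 2n-1$, both $H^{k-1}(S^{2n-1})$ and $H^k(U) \oplus H^k(V) = H^k(M_1) \oplus H^k(M_2)$ vanish, forcing $H^k(M_1 \# M_2) = 0$. In degree $k = 2n-1$, the term $H^{2n-2}(S^{2n-1})$ vanishes (since $n \geq 2$), and $H^{2n-1}(U) \oplus H^{2n-1}(V)$ vanishes by the hypothesis $H^{odd}(M_i)=0$, so again $H^{2n-1}(M_1 \# M_2) = 0$. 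Odd degrees above $2n$ vanish for dimension reasons.

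The only piece requiring slight care is degree $k = 1$, where I must track the connecting map $H^0(S^{2n-1}) \to H^1(M_1 \# M_2)$; here connectedness of $U$, $V$, and $U \cap V$ makes the restriction $H^0(U) \oplus H^0(V) \to H^0(S^{2n-1})$ surjective (the $(1,1) \mapsto 0$ combination is onto), so $H^1(M_1 \# M_2) = 0$. Beyond this minor bookkeeping the proof is mechanical, so I do not expect any serious obstacle; the whole argument depends only on standard long exact sequences and the known cohomology of spheres.
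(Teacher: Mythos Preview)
Your argument is correct and follows exactly the approach the paper indicates: the paper's entire proof reads ``It is easy to check the statement by using van-Kampen's theorem and the Mayer-Vietoris exact sequence,'' and you have simply supplied the routine details. The minor phrasing ``the $(1,1)\mapsto 0$ combination is onto'' is slightly awkward (you mean the difference map $H^{0}(U)\oplus H^{0}(V)\to H^{0}(S^{2n-1})$ is surjective because all three spaces are connected), but the mathematics is right.
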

\begin{proof}
It is easy to check the statement by using van-Kampen's theorem and the Mayer-Vietoris exact sequence.
\end{proof}

Assume that $(M_{1},\mathcal{O}_{1})$, $(M_{2},\mathcal{O}_{2})$ are $6$-dimensional omnioriented simply connected torus manifolds with $H^{odd}(M_{1})=H^{odd}(M_{2})=0$.
Let $(\Gamma_{1},\mathcal{A}_{1})$ and $(\Gamma_{2},\mathcal{A}_{2})$ be their induced $3$-valent torus graphs.
Assume that we can glue $p\in  M_{1}^{T}$ and $q\in M_{2}^{T}$ by the connected sum.
Then, by considering the restriction of $\varphi$ in \eqref{map-orie} onto $S^{2n-1}\subset \C^{n}$, i.e., the identity map,
the axial functions around $p\in V(\Gamma_{1})$ and $q\in V(\Gamma_{2})$ must satisfy 
\begin{eqnarray}
\label{con-sum}
\{\mathcal{A}_{1}(e)\ |\ e\in E_{p}(\Gamma_{1})\}=\{\mathcal{A}_{2}(e)\ |\ e\in E_{q}(\Gamma_{2})\}.
\end{eqnarray}
However, at this stage,
torus graphs $(\Gamma_{1},\mathcal{A}_{1})$ and $(\Gamma_{2},\mathcal{A}_{2})$
do not have the information of orientations of $M_{1}$ and $M_{2}$.
Note that, to do connected sum, we need the orientations around $p\in M_{1}^{T}$ and $q\in M_{2}^{T}$.
To encode the orientations around fixed points, we need the following notion:
%%%%%%%%%%%%%%%%%%%%%%%%%%%%%%%%%%%%%%%%%%%%%%%%%%%%%%%%%%%%%%%%%%%%%%%%%%%
%Definition 5.2
%%%%%%%%%%%%%%%%%%%%%%%%%%%%%%%%%%%%%%%%%%%%%%%%%%%%%%%%%%%%%%%%%%%%%%%%%%%
\begin{definition}
Let $(\Gamma,\mathcal{A})$ be a torus graph.
We call a triple $(\Gamma,\mathcal{A},\sigma)$ with a map $\sigma:V(\Gamma)\to \{+1, -1\}$ 
an {\it oriented torus graph}, if 
$\sigma:V(\Gamma)\to \{+1, -1\}$ satisfies the following condition for all $e\in E(\Gamma)$:
\begin{eqnarray*}
\sigma(\pi_{V}(e))\mathcal{A}(e)=-\sigma(\pi_{V}(\bar{e}))\mathcal{A}(\bar{e}),
\end{eqnarray*}
where $\pi_{V}(e)\in V(\Gamma)$ is the initial vertex of $e\in E(\Gamma)$, i.e., for $e=pq$, $\pi_{V}(e)=p$ and $\pi_{V}(\bar{e})=q$.
We call a map $\sigma$ an {\it orientation} of $(\Gamma,\mathcal{A})$.
\end{definition}
%%%%%%%%%%%%%%%%%%%%%%%%%%%%%%%%%%%%%%%%%%%%%%%%%%%%%%%%%%%%%%%%%%%%%%%%%%%
%Remark 5.3
%%%%%%%%%%%%%%%%%%%%%%%%%%%%%%%%%%%%%%%%%%%%%%%%%%%%%%%%%%%%%%%%%%%%%%%%%%%
\begin{remark}
Let $(M,\mathcal{O})$ be an omnioriented torus manifold.
The oriented torus graph $(\Gamma,\mathcal{A},\sigma)$ of $(M,\mathcal{O})$ is defined as follows.
Let $p\in M^{T}$. 
Then, there exist exactly $n$ characteristic submanifolds $M_{1},\ \ldots,\ M_{n}$ such that 
$p$ is a connected component of $\cap_{i=1}^{n}M_{i}$.
Now the fixed orientations of $M_{1},\ldots,M_{n}$ determine the decomposition of the tangential representation, i.e.,
$\psi_{p}:T_{p}M\stackrel{\simeq}{\to} V(\alpha_{1})\oplus\cdots\oplus V(\alpha_{n})$ is determined by fixing the orientations of $M_{1},\ldots,M_{n}$.
On the other hand, the orientation of $M$ determines the orientation of $T_{p}M$.
So, we define the map $\sigma:V(\Gamma)=M^{T}\to \{+1,-1\}$ by
\begin{eqnarray*}
\sigma(p)=
\left\{
\begin{array}{ll}
+1 & \text{if $\psi_{p}$ preserves the orientations} \\
-1 & \text{if $\psi_{p}$ reverses the orientations}
\end{array}
\right.
\end{eqnarray*}
\end{remark}

Let $(\Gamma_{1},\mathcal{A}_{1},\sigma_{1})$ and $(\Gamma_{2},\mathcal{A}_{2},\sigma_{2})$ be the induced oriented torus graphs
from $(M_{1},\mathcal{O}_{1})$ and $(M_{2},\mathcal{O}_{2})$.
If we can glue $p\in  M_{1}^{T}$ and $q\in M_{2}^{T}$ by the connected sum, 
then both of the relation \eqref{con-sum} and 
the relation
\begin{eqnarray}
\label{con-sum2}
\sigma_{1}(p)\not=\sigma_{2}(q)
\end{eqnarray}
hold (\eqref{con-sum2} corresponds to that the orientations on $T_{p}M_{1}$ and $T_{q}M_{2}$ are different).
Note the induced (oriented) torus graph by $M_{1}\#_{(p,q)}M_{2}$ is nothing but the one-skeleton of the connected sum $Q_{1}\#_{(p,q)}Q_{2}$
of manifolds with faces, where $Q_{i}$ is the orbit space of $M_{i}$, $i=1,2$ (also see \cite[Definition 3]{Iz}, \cite[Section 3.1]{Ku10} for details about the connected sum of polytopes).
Therefore, conversely, 
if $p\in V(\Gamma_{1})$ and $q\in V(\Gamma_{2})$ satisfy \eqref{con-sum} and \eqref{con-sum2},
then we can do the {\it connected sum of (oriented) torus graphs} between $(\Gamma_{1},\mathcal{A}_{1},\sigma_{1})$ and $(\Gamma_{2},\mathcal{A}_{2},\sigma_{2})$, say 
$(\Gamma,\mathcal{A},\sigma)=(\Gamma_{1},\mathcal{A}_{1},\sigma_{1})\# (\Gamma_{2},\mathcal{A}_{2},\sigma_{2})$ or $(\Gamma_{1},\mathcal{A}_{1},\sigma_{1})\#_{(p,q)}(\Gamma_{2},\mathcal{A}_{2},\sigma_{2})$ 
(if we emphasize the vertices $p\in V(\Gamma_{1})$ and $q\in V(\Gamma_{2})$).
More precisely, 
$(\Gamma,\mathcal{A},\sigma)=(\Gamma_{1},\mathcal{A}_{1},\sigma_{1})\# (\Gamma_{2},\mathcal{A}_{2},\sigma_{2})$ is defined 
 as follows (also see Figure \ref{operations1}).
\begin{enumerate}
\item $V(\Gamma)=V(\Gamma_{1})\setminus \{p\}\sqcup V(\Gamma_{2})\setminus \{q\}$;
\item $E(\Gamma)=(E(\Gamma_{1})\setminus \{pp_{1},pp_{2},pp_{3}\}) \sqcup (E(\Gamma_{2})\setminus \{qq_{1},qq_{2},qq_{3}\}) 
\sqcup \{p_{1}q_{1},p_{2}q_{2},p_{3}q_{3}\}$, 
where $\mathcal{A}_{1}(pp_{i})=\mathcal{A}_{2}(qq_{i})$ for $i=1,2,3$;
\item $\mathcal{A}:E(\Gamma)\to (\algt^{3}_{\Z})^{*}$ such that $\mathcal{A}(e)=\mathcal{A}_{1}(e)$ and $\mathcal{A}(f)=\mathcal{A}_{2}(f)$ for 
$e\in E(\Gamma_{1})\setminus \{pp_{1},pp_{2},pp_{3}\}$ and $f\in E(\Gamma_{2})\setminus \{qq_{1},qq_{2},qq_{3}\}$,
and $\mathcal{A}(p_{i}q_{i})=\mathcal{A}_{1}(p_{i}p)$ and $\mathcal{A}(q_{i}p_{i})=\mathcal{A}_{2}(q_{i}q)$;
\item $\sigma:V(\Gamma)\to \{+1,-1\}$ is defined by $\sigma(r)=\sigma_{1}(r)$ for $r\in V(\Gamma_{1})\setminus \{p\}$ and 
$\sigma(r')=\sigma_{2}(r')$ for $r'\in V(\Gamma_{2})\setminus \{q\}$.
\end{enumerate}
\begin{figure}[h]
\includegraphics[width=250pt,clip]{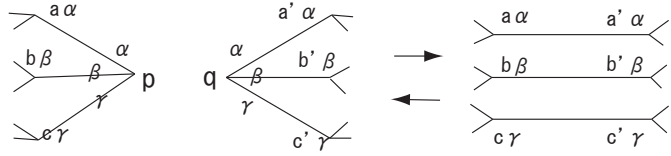}
\caption{This is the local figure of the connected sum $\#_{(p,q)}$ of torus manifold (from left to right) and 
its inverse $\#_{(p,q)}^{-1}$ (from right to left), where $\sigma_{1}(p)\not=\sigma_{2}(q)$.
Here, $\alpha,\beta,\gamma$ are $\Z$-basis of $(\algt_{\Z}^{3})^{*}$ and $a,a',b,b',c,c'=\pm 1$.}
\label{operations1}
\end{figure} 

Then, we can easily check that $(\Gamma,\mathcal{A},\sigma)$ is an oriented torus graph. 
Using Corollary \ref{key2} and the arguments as above, we have the following lemma:
%%%%%%%%%%%%%%%%%%%%%%%%%%%%%%%%%%%%%%%%%%%%%%%%%%%%%%%%%%%%%%%%%%%%%%%%%%%
%Lemma 5.4
%%%%%%%%%%%%%%%%%%%%%%%%%%%%%%%%%%%%%%%%%%%%%%%%%%%%%%%%%%%%%%%%%%%%%%%%%%%
\begin{lemma}
\label{con-sum canonical model}
Let $M_{1}$, $M_{2}$ be $6$-dimensional simply connected torus manifolds with $H^{odd}(M_{1})=H^{odd}(M_{2})=0$,
and $(\Gamma_{1},\mathcal{A}_{1},\sigma_{1})$, $(\Gamma_{2},\mathcal{A}_{2},\sigma_{2})$ be their induced oriented torus graphs from some omniorientations, respectively.
If $(\Gamma,\mathcal{A},\sigma)=(\Gamma_{1},\mathcal{A}_{1},\sigma_{1})\#_{(p,q)}(\Gamma_{2},\mathcal{A}_{2},\sigma_{2})$, then
$(\Gamma,\mathcal{A},\sigma)$ is the oriented torus graph induced from $M=M_{1}\#_{(p,q)}M_{2}$ with some omniorientation.
\end{lemma}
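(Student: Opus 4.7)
The plan is to exhibit an explicit omniorientation $\mathcal{O}$ on $M = M_1\#_{(p,q)}M_2$ whose induced oriented torus graph coincides on the nose with the combinatorial $(\Gamma,\mathcal{A},\sigma)$ defined just before the lemma, and then invoke Corollary \ref{key2} to finish. First, Lemma \ref{lem-easy} guarantees that $M$ is a simply connected $6$-dimensional torus manifold with $H^{odd}(M)=0$, so its oriented torus graph is defined once an omniorientation is chosen. The hypothesis that $(\Gamma_1,\mathcal{A}_1,\sigma_1)\#_{(p,q)}(\Gamma_2,\mathcal{A}_2,\sigma_2)$ exists gives us two things that the geometric connected sum needs: the matching of tangential representations at $p$ and $q$ (condition \eqref{con-sum}), which is precisely what allows $T$-invariant neighborhoods $U_1\ni p$ and $U_2\ni q$ in $M_1$ and $M_2$ to be weakly equivariantly diffeomorphic to the same $\C^3$ via the slice theorem; and the orientation condition \eqref{con-sum2} $\sigma_1(p)\neq\sigma_2(q)$, which says exactly that the specified orientations on $T_pM_1$ and $T_qM_2$ disagree, so that the gluing map $\varphi$ from \eqref{map-orie} is orientation preserving.

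Next I would define $\mathcal{O}$ on $M$. The tangent bundle orientation is the unique one that restricts to the given orientations on $M_1\setminus U_1$ and $M_2\setminus U_2$; this is consistent because of \eqref{con-sum2}. The characteristic submanifolds of $M$ split into three types. Those characteristic submanifolds of $M_i$ that miss the removed fixed point ($p$ or $q$) survive unchanged in $M$, and we keep their original orientations. The remaining three characteristic submanifolds through $p\in M_1$ are paired with the three through $q\in M_2$ via the bijection determined by \eqref{con-sum}, and each such pair glues along a $T$-invariant $S^3\subset S^5$ to give a single characteristic submanifold of $M$. The matching of axial function values on the paired edges (the data recorded in item (2) of the combinatorial definition) is precisely what makes the two orientations coming from $M_1$ and $M_2$ agree on the overlap $S^3$, so the glued submanifold inherits a canonical orientation.

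Now I would verify that the oriented torus graph $(\Gamma_M,\mathcal{A}_M,\sigma_M)$ induced by $(M,\mathcal{O})$ equals the combinatorial $(\Gamma,\mathcal{A},\sigma)$. The fixed-point set satisfies $M^T=(M_1^T\setminus\{p\})\sqcup(M_2^T\setminus\{q\})$, matching item (1). The one-dimensional orbits of $M$ are those of $M_1$ not incident to $p$, those of $M_2$ not incident to $q$, and three new orbits obtained by concatenating an orbit through $p$ with its partner through $q$ via the identity on $S^5\subset\C^3$; this matches item (2) and also shows the tangential representations of $M$ at $p_i$ and $q_i$ in the direction of the glued orbit agree with those in $M_1$ and $M_2$ respectively, giving $\mathcal{A}_M(p_iq_i)=\mathcal{A}_1(p_ip)$ and $\mathcal{A}_M(q_ip_i)=\mathcal{A}_2(q_iq)$ as in item (3). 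Finally, because $\mathcal{O}$ was chosen to restrict to the original orientations away from the gluing region, $\sigma_M(r)=\sigma_1(r)$ for $r\in V(\Gamma_1)\setminus\{p\}$ and $\sigma_M(r')=\sigma_2(r')$ for $r'\in V(\Gamma_2)\setminus\{q\}$, which is item (4). Applying Corollary \ref{key2} to the underlying (unoriented) torus graphs then identifies $M$ up to equivariant diffeomorphism with any torus manifold realizing $(\Gamma,\mathcal{A},\sigma)$.

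The routine but genuinely delicate step is the orientation bookkeeping of the third paragraph: making sure that the signs $a,a',b,b',c,c'=\pm 1$ appearing in Figure \ref{operations1} on the two sides of each glued edge are compatible, and that the resulting global orientation of each merged characteristic submanifold and of the tangent bundle of $M$ really reproduces the combinatorial $\sigma$. This is the main obstacle; once it is handled, everything else is a repackaging of the slice-theorem description of $\#_{(p,q)}$ through the dual dictionary \eqref{dual} between omnioriented characteristic functions and axial functions, combined with Corollary \ref{key2}.
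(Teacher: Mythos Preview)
Your argument is correct and follows the same route the paper sketches in the discussion preceding the lemma: verify that the geometric connected sum $M_1\#_{(p,q)}M_2$ exists (this is exactly what conditions \eqref{con-sum} and \eqref{con-sum2} guarantee), endow it with the omniorientation inherited from $\mathcal{O}_1$ and $\mathcal{O}_2$, and check directly---equivalently, via the orbit-space connected sum $Q_1\#_{(p,q)}Q_2$---that the resulting oriented torus graph agrees with the combinatorial $(\Gamma,\mathcal{A},\sigma)$. The paper compresses all of this into the phrase ``the arguments as above'' and gives no separate proof block; your write-up simply makes those arguments explicit.

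One remark: your final sentence invoking Corollary~\ref{key2} is unnecessary and slightly off target. By the end of your third paragraph you have already established $(\Gamma_M,\mathcal{A}_M,\sigma_M)=(\Gamma,\mathcal{A},\sigma)$, which is precisely the statement of the lemma. The lemma does not ask you to identify $M$ up to equivariant diffeomorphism with an arbitrary torus manifold realizing $(\Gamma,\mathcal{A},\sigma)$; it asks only that the specific manifold $M=M_1\#_{(p,q)}M_2$ carry an omniorientation inducing $(\Gamma,\mathcal{A},\sigma)$. So you should drop that last appeal to Corollary~\ref{key2}. The paper's mention of Corollary~\ref{key2} in the lead-in is better read as pointing to the dictionary of Lemma~\ref{equiv<->equiv} (torus graphs versus orbit spaces with characteristic functions) and to why the lemma will be useful downstream, rather than as an ingredient in the proof itself.
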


By using the connected sum, we can construct the torus manifolds which are not  
appeared in Section \ref{sect4}.
For example, the following torus manifold is one of such examples:   
\[
\C P^3\# (S^{2}\times S^{4})\# \overline{\C P^3},
\]
where $\overline{\C P^{3}}$ is the reversed orientation of $\C P^{3}$.
The Figure \ref{others} shows the torus graph induced from $\C P^3\# (S^{2}\times S^{4})\# \overline{\C P^3}$ 
(see the axial functions on Figure \ref{tg-bundles}, \ref{tg-CP^3} for details). We can easily check that this graph is $3$-valent, simple and planner but not $3$-connected; therefore, due to Lemma \ref{quasi-toru},
this manifold is not a quasitoric manifold. 
\begin{figure}[h]
\begin{center}
\includegraphics[width=250pt,clip]{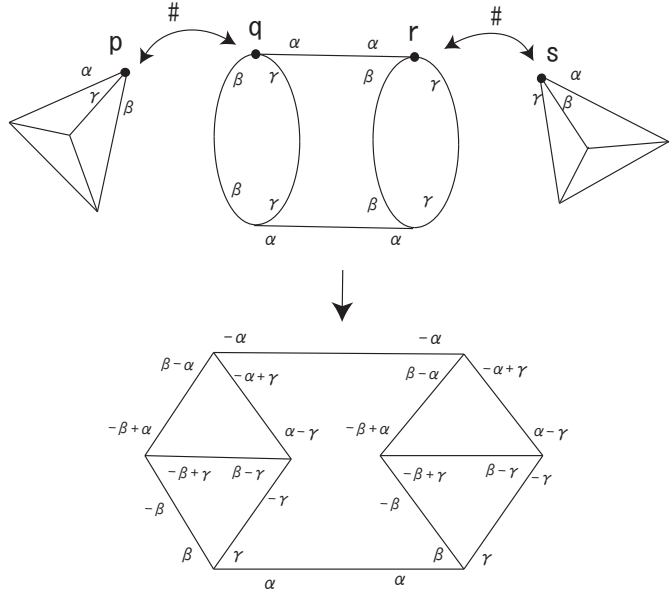}
\caption{Torus graph (with appropriate orientations, e.g, $\sigma(p)=+1,\ \sigma(q)=-1,\ \sigma(r)=+1,\ \sigma(s)=-1$) induced from $\C P^3\# (S^{2}\times S^{4})\# \overline{\C P^3}$.}
\label{others}
\end{center}
\end{figure}

%%%%%%%%%%%%%%%%%%%%%%%%%%%%%%%%%%%%%%%%%%%%%%%%%%%%%%%%%%%%%%%%%%%%%%%%%%%
%section 6
%%%%%%%%%%%%%%%%%%%%%%%%%%%%%%%%%%%%%%%%%%%%%%%%%%%%%%%%%%%%%%%%%%%%%%%%%%%
\section{Some combinatorial lemmas}
\label{sect6}

To prove the main theorem (Theorem \ref{main-1}),
in this section, we show the following two lemmas. 
%%%%%%%%%%%%%%%%%%%%%%%%%%%%%%%%%%%%%%%%%%%%%%%%%%%%%%%%%%%%%%%%%%%%%%%%%%%
%Lemma 6.1
%%%%%%%%%%%%%%%%%%%%%%%%%%%%%%%%%%%%%%%%%%%%%%%%%%%%%%%%%%%%%%%%%%%%%%%%%%%
\begin{lemma}
\label{one-pt}
Let $Q$ be a $3$-dimensional manifold with faces which is homeomorphic to $D^{3}$ and $\Gamma$ be its graph.
Then, $\Gamma\setminus \{p\}$ is connected, for all vertices $p\in V(\Gamma)$.
\end{lemma}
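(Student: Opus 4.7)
The plan is to exploit the niceness axiom at $p$ together with the fact that each facet of $Q$ is an embedded compact $2$-manifold with corners in $\partial Q \cong S^{2}$, whose $2$-manifold boundary is a disjoint union of cycles in $\Gamma$.

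First I would note that, since $p$ is a codimension-$3$ face, the niceness condition gives exactly three facets $F_{1}, F_{2}, F_{3}$ meeting at $p$. The three edges at $p$ are $e_{k} = F_{i} \cap F_{j}$ (locally near $p$) for $\{i,j,k\} = \{1,2,3\}$, and I denote the other endpoint of $e_{k}$ by $p_{k}$. The key step is to show that for each $k$, the two neighbors $p_{i}, p_{j}$ lie in one connected component of $\Gamma \setminus \{p\}$. Since $F_{k}$ is embedded in $Q$, the vertex $p$ is a single corner of $F_{k}$, and the two edges of $F_{k}$ meeting at this corner are precisely $e_{i}$ and $e_{j}$. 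These two edges are adjacent on a single boundary circle of $F_{k}$ by the local wedge model at the corner $p$, so deleting $p$ from this boundary circle yields a walk in $\Gamma \setminus \{p\}$ from $p_{i}$ to $p_{j}$.

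Applying this argument for each $k \in \{1,2,3\}$ puts the three neighbors $p_{1}, p_{2}, p_{3}$ into a single connected component of $\Gamma \setminus \{p\}$. To finish, using the connectedness of $\Gamma$ (which holds in the face-acyclic setting of the paper, since then each facet is a $2$-disk and $\partial Q \cong S^{2}$ is a connected CW-complex whose $1$-skeleton is $\Gamma$), any other vertex $q$ admits a simple path in $\Gamma$ from $q$ to $p$. The last edge of such a path is some $e_{k}$, so its penultimate vertex is $p_{k}$; deleting the last edge gives a walk in $\Gamma \setminus \{p\}$ from $q$ to $p_{k}$. Hence every vertex of $\Gamma \setminus \{p\}$ lies in the component containing $\{p_{1}, p_{2}, p_{3}\}$, and $\Gamma \setminus \{p\}$ is connected.

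The main obstacle is the intermediate step identifying the boundary circle of $F_{k}$ with $p$ removed as a walk from $p_{i}$ to $p_{j}$. This relies on the injectivity of the embedding $F_{k} \hookrightarrow Q$ (so $p$ is not a doubly covered corner), on the local wedge model of a manifold with corners at a codimension-$2$ point in the $2$-manifold $F_{k}$ (ensuring the two edges at $p$ lie on a single boundary component of $F_{k}$), and on the fact that the $2$-manifold boundary of $F_{k}$ decomposes into closed walks in $\Gamma$ whose vertex/edge sets are precisely the lower-dimensional strata of $F_{k}$. All three are standard consequences of the nice manifold-with-faces structure.
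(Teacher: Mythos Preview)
Your proof is correct and takes a genuinely different route from the paper's. The paper argues by contradiction: assuming $\Gamma\setminus\{p\}$ is disconnected, it singles out a component $\Gamma_{1}$ containing exactly one neighbour $p_{1}$ of $p$, finds a $2$-valent ``outer boundary'' cycle $\partial\Gamma_{1}$ of this component in $\partial Q\cong S^{2}$, and argues that this cycle must bound a facet $F$ for which the edge $p_{1}p$ is a self-intersection, contradicting the manifold-with-faces axioms. Your argument is direct and local: you use the three facets $F_{1},F_{2},F_{3}$ already known to pass through $p$, observe that each $F_{k}$ carries a boundary cycle through the single corner $p$, and traverse that cycle the other way to join $p_{i}$ to $p_{j}$ inside $\Gamma\setminus\{p\}$. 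This avoids the somewhat informal identification of $\partial\Gamma_{1}$ in the paper and makes the role of the niceness axiom at $p$ explicit; by contrast the paper's route foregrounds the planarity coming from $\partial Q\cong S^{2}$. Note that both arguments implicitly use that $\Gamma$ itself is connected (the paper needs it to ensure some component of $\Gamma\setminus\{p\}$ contains exactly one neighbour; you need it in your final paragraph). You are right that this holds in the face-acyclic setting in which the lemma is actually applied, so there is no gap in either proof for the purposes of the paper.
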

\begin{proof}
Because $Q$ is homeomorphic to the $3$-disk $D^{3}$, $\Gamma$ may be regarded as a planner graph by the stereographic projection of $\partial Q=S^{2}$.
Assume $\Gamma\setminus \{p\}$ is not connected. 
Because $Q$ is a $3$-dimensional manifold with faces,
there are exactly three out-going edges from $p$, say $pp_{1}$, $pp_{2}$ and $pp_{3}$.
Therefore, we may assume that there exists a connected component $\Gamma_{1}$ in $\Gamma\setminus \{p\}$ such that 
$p_{1}\in V(\Gamma_{1})$ but $p_{2},p_{3}\not\in V(\Gamma_{1})$ (see Figure \ref{one-pt-con}).
\begin{figure}[h]
\begin{center}
\includegraphics[width=100pt,clip]{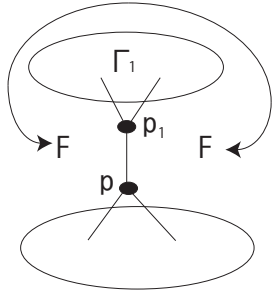}
\caption{The figure explained the proof of Lemma \ref{one-pt}. The facet $F$ has the self-intersection on the edge $p_{1}p$.}
\label{one-pt-con}
\end{center}
\end{figure}
Since $\Gamma_{1}$ is also a planner $3$-valent graph excepts on the vertex $p_{1}$ (because $p\not\in V(\Gamma_{1})$), 
there is a $2$-valent subgraph in $\Gamma_{1}$, say $\partial \Gamma_{1}$, such that $\partial \Gamma_{1}$ splits $\partial Q=S^{2}$ into 
two connected components $H_{+}$ and $H_{-}$, 
where $\Gamma_{1}\setminus \partial \Gamma_{1}\subset H_{+}\setminus \partial \Gamma_{1}$ but $\Gamma_{1}\not\subset H_{-}$. 
This implies that there is a facet $F$ in $Q$ such that $\partial F$ contains $\partial \Gamma_{1}$ and  $p_{1}p$.
However, in this case, $p_{1}p$ must be a self-intersection edge of $F$ (see Figure \ref{one-pt-con}).
This gives a contradiction to that $Q$ is a manifold with faces.
This establishes the statement.
\end{proof}

By Lemma \ref{one-pt}, if $\Gamma$ is not $3$-connected, then
there are two vertices $p,\ q\in V(\Gamma)$ such that $\Gamma\setminus \{p,\ q\}$ is not connected but both of 
$\Gamma\setminus \{p\}$ and $\Gamma\setminus \{q\}$ are connected.
More precisely, we have the following lemma:

%%%%%%%%%%%%%%%%%%%%%%%%%%%%%%%%%%%%%%%%%%%%%%%%%%%%%%%%%%%%%%%%%%%%%%%%%%%
%Lemma 6.2
%%%%%%%%%%%%%%%%%%%%%%%%%%%%%%%%%%%%%%%%%%%%%%%%%%%%%%%%%%%%%%%%%%%%%%%%%%%
\begin{lemma}
\label{separate-points}
Let $Q$ be a $3$-dimensional manifold with faces which is homeomorphic to $D^{3}$ and $\Gamma$ be its graph.
Assume that there are two vertices $p,\ q\in V(\Gamma)$ such that 
$\{p,\ q\}\not\subset V(F)$ for any facets $F$, i.e, 
$p$ and $q$ are not on the same facet $F$.
Then, $\Gamma\setminus \{p,\ q\}$ is connected.
\end{lemma}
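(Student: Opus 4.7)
The plan is to argue by contradiction, exploiting the planarity of $\Gamma$ that comes from the embedding $\Gamma\subset\partial Q\cong S^{2}$ together with Lemma \ref{one-pt}. Since $Q$ is homeomorphic to $D^{3}$, its $1$-skeleton $\Gamma$ (whose vertices and edges all have depth $\geq 2$) sits inside $\partial Q\cong S^{2}$, so $\Gamma$ is a planar graph and $S^{2}\setminus\Gamma$ is the disjoint union of the interiors of the facets of $Q$. Consequently the boundary of every facet is a closed walk in $\Gamma$. Around each vertex $v$, the three incident edges admit a cyclic order, and the three facets incident to $v$ occupy the three sectors between cyclically consecutive edges; each of these facet boundaries uses exactly two of the three edges at $v$.

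Now suppose for contradiction that $\Gamma\setminus\{p,q\}$ is disconnected, and write it as $\Gamma_{1}\sqcup\Gamma_{2}\sqcup\cdots$ with at least two components. By Lemma \ref{one-pt}, $\Gamma\setminus\{p\}$ is connected, and since $\Gamma\setminus\{p,q\}$ is obtained from $\Gamma\setminus\{p\}$ by removing the single vertex $q$, every component $\Gamma_{i}$ must contain at least one neighbor of $q$. Because $\Gamma$ is $3$-valent, $q$ has exactly three neighbors $q_{1},q_{2},q_{3}$, so one can find two of them, say $q_{i}$ and $q_{j}$, that lie in different components and are cyclically consecutive in the rotation around $q$ (any two of three cyclically ordered neighbors are consecutive).

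The key step is then to consider the facet $F$ lying between the consecutive edges $qq_{i}$ and $qq_{j}$. Its boundary is a closed walk in $\Gamma$ passing through $q$ via the two edges $qq_{i}$ and $qq_{j}$; removing the vertex $q$ from this walk produces a path in $\Gamma\setminus\{q\}$ from $q_{i}$ to $q_{j}$ along $\partial F$. If this path also avoided $p$, it would connect $q_{i}$ and $q_{j}$ inside $\Gamma\setminus\{p,q\}$, contradicting the fact that they lie in different components. Hence $p\in\partial F$, which means $p$ and $q$ are both vertices of the single facet $F$, contradicting the hypothesis.

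The main obstacle I anticipate is the preliminary topological step: justifying rigorously that the embedding $\Gamma\subset S^{2}$ realizes the facets as the closures of the connected components of $S^{2}\setminus\Gamma$ in a manner where boundaries are (simple) cycles whose consecutive edges around $q$ are well defined. This should follow from the manifold-with-faces structure (in particular, from the local standard picture at each depth-$3$ point that ensures the three facets at $q$ are distinct and meet pairwise along edges at $q$), together with the observation used in Lemma \ref{one-pt} that $\partial Q\cong S^{2}$ makes $\Gamma$ planar. Once this topological setup is pinned down, the combinatorial argument above closes the proof in a few lines.
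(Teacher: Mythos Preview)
Your proof is correct and uses essentially the same idea as the paper: the boundary of a facet incident to one of the two vertices provides a detour connecting two of its neighbors, and the hypothesis that $p$ and $q$ share no facet guarantees this detour avoids the other vertex. The only organizational difference is that the paper argues directly (take arbitrary $r,s\in\Gamma\setminus\{p,q\}$, find a path in $\Gamma\setminus\{q\}$ via Lemma~\ref{one-pt}, and if it passes through $p$ reroute along a facet at $p$, which cannot contain $q$), whereas you argue by contradiction centered at $q$; the facet-detour step is identical.
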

\begin{proof}
Assume that $p,\ q$ are not on the same facet of $Q$.
Because $Q$ is a manifold with faces, 
there are mutually distinct facets $F_{1},\ldots,F_{6}$ such that $\{p\}$ is a component of $F_{1}\cap F_{2}\cap F_{3}$ and $\{q\}$ is that of $F_{4}\cap F_{5}\cap F_{6}$ and we can take vertices $p_{1},p_{2},p_{3}$ and $q_{1},q_{2},q_{3}$ 
such that $pp_{i}$ and $qq_{i}$ are all outgoing edges from $p$ and $q$, for $i=1,2,3$.
Take two vertices $r,\ s$ from $\Gamma\setminus \{p,q\}$.
By Lemma \ref{one-pt}, $\Gamma\setminus\{q\}$ is connected. 
So there is a path $\gamma$ from $r$ to $s$ in $\Gamma\setminus\{q\}$.
If $\gamma$ does not go through $p$, then $r$ and $s$ are connected in $\Gamma\setminus \{p,q\}$.
Assume that this path $\gamma$ goes through $p$. 
Then $\gamma$ goes through exactly two vertices $p_{i}$, $p_{j}$ (we may assume $p_{1}$ and $p_{2}$).
Moreover, one of the facets $F_{1},F_{2},F_{3}$, say $F_{1}$, contains both of $p_{1}$, $p_{2}$. 
Note that $F_{1}$ corresponds to the $2$-valent subgraph in $\Gamma$. 
Therefore, we can take the path $\gamma_{p}$ connecting $p_{1}$, $p_{2}$ on $F_{1}$ which is not the path $p_{1}pp_{2}$.
Because $p,\ q$ are not on the same facet, in particular $q\not\in V(F_{1})$, the path $\gamma_{p}$ does not contain $q$.
Hence, the connected subgraph $\gamma\cup \gamma_{p}$ contains both of $r,s$ but does not contain both of $p,q$.
Thus, we can take the path $\gamma'$  from $r$ to $s$ in $\gamma\cup \gamma_{p}\subset \Gamma\setminus \{p,\ q\}$.
This establishes that $\Gamma\setminus \{p,q\}$ is connected.
\end{proof}

In summary, by Lemma \ref{one-pt} and \ref{separate-points}, we have the following fact.
%%%%%%%%%%%%%%%%%%%%%%%%%%%%%%%%%%%%%%%%%%%%%%%%%%%%%%%%%%%%%%%%%%%%%%%%%%%
%Corollary 6.3
%%%%%%%%%%%%%%%%%%%%%%%%%%%%%%%%%%%%%%%%%%%%%%%%%%%%%%%%%%%%%%%%%%%%%%%%%%%
\begin{corollary}
\label{key3}
Let $\Gamma$ be a one-skeleton of $3$-dimensional manifold with faces $Q$.
Then, for all $p\in V(\Gamma)$, $\Gamma\setminus \{p\}$ is connected.
Furthermore, if $\Gamma\setminus \{p,\ q\}$ is not connected, then $p,q$ is on the same facet.
\end{corollary}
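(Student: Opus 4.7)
The plan is to observe that Corollary \ref{key3} is a direct repackaging of Lemma \ref{one-pt} and Lemma \ref{separate-points}, so the argument amounts to citing the two lemmas in sequence. The first conclusion, that $\Gamma \setminus \{p\}$ is connected for every $p \in V(\Gamma)$, is verbatim the conclusion of Lemma \ref{one-pt}, so this half requires only an appeal to that lemma (with the tacit hypothesis, consistent with the surrounding discussion and Corollary \ref{disk}, that $Q$ is homeomorphic to $D^{3}$).

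For the second conclusion I would pass to the contrapositive of Lemma \ref{separate-points}. That lemma states: if no facet of $Q$ contains both $p$ and $q$, then $\Gamma \setminus \{p,q\}$ is connected. Contrapositively, if $\Gamma \setminus \{p,q\}$ fails to be connected, then there must exist a facet $F$ with $\{p,q\} \subset V(F)$, which is exactly the assertion of the corollary. There is no genuine obstacle here; the only mildly subtle point is that the hypothesis $Q \cong D^{3}$ appearing in both lemmas is implicit rather than explicit in the statement of Corollary \ref{key3}, but this causes no difficulty because the only application of the corollary will be to orbit spaces of simply connected $6$-dimensional torus manifolds with $H^{odd}(M)=0$, which by Corollary \ref{disk} are precisely such disks.
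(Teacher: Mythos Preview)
Your proposal is correct and matches the paper's own treatment exactly: the corollary is stated immediately after Lemmas \ref{one-pt} and \ref{separate-points} with only the phrase ``In summary, by Lemma \ref{one-pt} and \ref{separate-points}, we have the following fact,'' so the paper likewise regards it as a direct repackaging via citation of the first lemma and contraposition of the second. Your observation about the implicit hypothesis $Q\cong D^{3}$ is apt and does not affect the argument.
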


%%%%%%%%%%%%%%%%%%%%%%%%%%%%%%%%%%%%%%%%%%%%%%%%%%%%%%%%%%%%%%%%%%%%%%%%%%%
%section 7
%%%%%%%%%%%%%%%%%%%%%%%%%%%%%%%%%%%%%%%%%%%%%%%%%%%%%%%%%%%%%%%%%%%%%%%%%%%
\section{Proof of main theorem}
\label{sect7}

The main theorem of this paper can be stated as follows:
%%%%%%%%%%%%%%%%%%%%%%%%%%%%%%%%%%%%%%%%%%%%%%%%%%%%%%%%%%%%%%%%%%%%%%%%%%%
%Theorem 7.1
%%%%%%%%%%%%%%%%%%%%%%%%%%%%%%%%%%%%%%%%%%%%%%%%%%%%%%%%%%%%%%%%%%%%%%%%%%%
\begin{theorem}
\label{main-1}
Let $M$ be a simply connected, $6$-dimensional torus manifold with $H^{odd}(M)=0$.
Then, $M$ is equivariantly diffeomorphic to one of the following manifolds:
\begin{enumerate}
\item $S^6\subset \C^{3}\oplus \R$ with a torus action induced from a (faithful) representation of $T^{3}$ on $\C^{3}$;
\item $6$-dimensional quasitoric manifold $X$; 
\item $S^4$-bundle over $S^2$ which is equivariantly diffeomorphic to $M(\epsilon,a,b)$ for some $\epsilon=\pm 1,\ a,\ b\in \Z$,
\end{enumerate}
or otherwise, there are some $6$-dimensional quasitoric manifolds $X_{h}$, for some $h=1,\ldots,k$, and some $S^4$-bundles over $S^2$, 
say $S_{i}=M(\epsilon_{i},a_{i},b_{i})$ (for some $\epsilon_{i}=\pm 1$, $a_{i},b_{i}\in \Z$ and $i=1,\ldots,\ell$), such that
$M$ is equivariantly diffeomorphic to
\[
(\#_{h=1}^{k}X_{h})\# (\#_{i=1}^{\ell} S_{i})
\]
where $\#$ represents the equivariant connected sum around fixed points, $k+\ell\ge 2$ for $k\ge 0,\ \ell\ge 1$,
and the case $k=0$ means that 
there is no $X_{h}$ factor.
\end{theorem}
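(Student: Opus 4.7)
My plan is to invoke Corollary \ref{key2} to reduce the problem to classifying 3-valent oriented torus graphs $(\Gamma, \mathcal{A}, \sigma)$ coming from a 3-dimensional manifold with faces $Q \cong D^3$, and to show by strong induction on $|V(\Gamma)|$ (which is automatically even, by 3-regularity) that every such graph is either one of the basic graphs of Section \ref{sect4} or is obtained from these basic graphs by an iterated connected sum of oriented torus graphs in the sense of Section \ref{sect5}. Lemma \ref{con-sum canonical model} will then translate this graph-level decomposition into the desired equivariant connected-sum decomposition of $M$.

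For the base cases $|V(\Gamma)| \leq 4$, Lemmas \ref{S^6} and \ref{S^4-S^2(geom)} identify $M$ directly as $S^6$, $\C P^3$, or some $M(\epsilon, a, b)$. For the inductive step with $|V(\Gamma)| \geq 6$, I split into three cases. First, a triple edge would force $|V(\Gamma)| = 2$, so it does not occur. Second, if $\Gamma$ has a double edge $pq$ but no triple edge, then the 2-valent subgraph at $\{p, q\}$ together with the remaining edges at $p$ and $q$ looks locally like the $\Gamma_S$-pattern of Section \ref{sect4.2}; I will use Proposition \ref{connection} to locate a 3-edge cut whose axial-function values form a $\Z$-basis of $\algt_{\Z}^*$, producing a decomposition $\Gamma = \Gamma' \# \Gamma_S$ where $\Gamma_S$ is the torus graph of some $M(\epsilon, a, b)$. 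Third, if $\Gamma$ is simple and 3-connected, then Lemma \ref{quasi-toru} yields a quasitoric $M$; otherwise, by Corollary \ref{key3} there is a separating pair $\{p, q\}$ lying on a common facet $F$, and using the disk structure of $F$ and the planar embedding of $\Gamma$ on $\partial Q \cong S^2$, I will extract a 3-edge cut realizing $\Gamma$ as a connected sum $\Gamma_1 \# \Gamma_2$ with both factors strictly smaller.

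The main obstacle is the simple but not 3-connected case. Even granted a separating pair $\{p, q\}$ on a common facet $F$, the three cut edges must be chosen so that their axial-function values form a $\Z$-basis of $\algt_{\Z}^*$, which is needed for the smoothness condition \eqref{smooth-condition2} to hold at the two new vertices obtained by capping each piece; the capped pieces must be 1-skeletons of honest 3-disks with faces, and the connection $\nabla$ and orientation $\sigma$ must extend consistently across the cut. The key tool will be a careful local analysis along the boundary cycle $\partial F$: since $F$ is a disk on $\partial Q$ and $\partial F$ is a cycle through both $p$ and $q$, the two arcs of $\partial F$ joining $p$ to $q$ carry axial-function values constrained by the transport rule of Proposition \ref{connection}(3), and combining these constraints with the third edges emanating from $p$ and $q$ should yield a cut satisfying the basis condition. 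Once the decomposition is established in all cases, applying the inductive hypothesis to each smaller factor and invoking Lemma \ref{con-sum canonical model} completes the proof.
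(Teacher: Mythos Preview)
Your proposal is correct and follows essentially the same strategy as the paper: reduce via Corollary~\ref{key2} to $3$-valent torus graphs over $D^{3}$, handle the small and multiple-edge cases with Lemmas~\ref{S^6}, \ref{S^4-S^2(geom)}, \ref{quasi-toru}, and in the simple but not $3$-connected case use Corollary~\ref{key3} to locate a separating pair on a common facet and split $(\Gamma,\mathcal{A})$ as a connected sum, then induct. The paper's execution of that last step is slightly more specific than your sketch---it performs a case analysis on the size $|V(F)|$ of the singular facet (showing the $|V(F)|\le 5$ case forces a larger singular facet $F'$, and in the $|V(F)|\ge 6$ case using a neighboring facet $F'$ meeting $F$ along two edges to place the $3$-edge cut, which introduces an intermediate $(\Gamma_{S},\mathcal{A}_{S})$ factor)---but these are exactly the details your ``main obstacle'' paragraph is pointing toward.
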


In this final section, we prove Theorem \ref{main-1}.

Let $M$ be 
a simply connected, $6$-dimensional torus manifold with $H^{odd}(M)=0$, 
$Q$ be its orbit space which is homeomorphic to $D^{3}$ and
$(\Gamma_{M},\mathcal{A}_{M})$ be its induced oriented torus graph (we omit the orientation).

Because $\Gamma_{M}$ is a one-skeleton of a manifold with faces which is homeomorphic to $D^{3}$, 
it is easy to check that $|V(\Gamma_{M})|\not=1, 3$.
If $|V(\Gamma_{M})|=2$,
by Lemma \ref{S^6}, we have that 
$M$ is equivariantly diffeomorphic to $S^6$, i.e., the statement (1).
If $|V(\Gamma_{M})|= 4$, it follows from Lemma \ref{S^4-S^2(geom)} that 
$M$ is equivariantly diffeomorphic to a quasitoric manifold $\C P^3$ or $M(\epsilon,a,b)$ for some $\epsilon=\pm 1$, $a,b\in \Z$,
i.e., the statement (2) or (3) occurs.
So we may only prove the case when $|V(\Gamma_{M})|\ge 5$. 

We first claim the following lemma:
%%%%%%%%%%%%%%%%%%%%%%%%%%%%%%%%%%%%%%%%%%%%%%%%%%%%%%%%%%%%%%%%%%%%%%%%%%%
%Lemma 7.2
%%%%%%%%%%%%%%%%%%%%%%%%%%%%%%%%%%%%%%%%%%%%%%%%%%%%%%%%%%%%%%%%%%%%%%%%%%%
\begin{lemma}
\label{1-claim}
Assume that $|V(\Gamma_{M})|\ge 5$ and there is a multiple edge in $\Gamma_{M}$.
Then, $(\Gamma_{M},\mathcal{A}_{M})$ can be decomposed into the connected sum of 
the following torus graphs:
\[
(\Gamma_{M},\mathcal{A}_{M})=(\Gamma_{X},\mathcal{A}_{X})\# (\Gamma_{S_{1}},\mathcal{A}_{S_{1}})\#\cdots \# (\Gamma_{S_{\ell'}},\mathcal{A}_{S_{\ell'}})
\]
or 
\[
(\Gamma_{M},\mathcal{A}_{M})=(\Gamma_{S_{1}},\mathcal{A}_{S_{1}})\#\cdots \# (\Gamma_{S_{\ell'}},\mathcal{A}_{S_{\ell'}}),
\]
 where $(\Gamma_{X},\mathcal{A}_{X})$ is a torus graph without multiple edges and $S_{i}=M(\epsilon_{i},a_{i},b_{i})$ for $i=1,\ldots,\ell'$.
\end{lemma}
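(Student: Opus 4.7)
I proceed by induction on the number of multiple edges in $\Gamma_M$, the base case (no multiple edges) being handled by Lemma \ref{quasi-toru}. For the inductive step fix a multiple edge $\{e_1,e_2\}$ between vertices $p,q$, and let $f_1=pp'$ and $f_2=qq'$ be the remaining third edges at $p,q$. Since a triple edge would force $|V(\Gamma_M)|=2<5$, we have $p'\ne q$ and $q'\ne p$; and moreover $p'\ne q'$, for otherwise $f_1,f_2$ would be the only connections from $\{p,q\}$ to the rest, so $\Gamma_M\setminus\{p'\}$ would disconnect $\{p,q\}$ from the remaining vertices, contradicting Lemma \ref{one-pt}. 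The goal is to realise $(\Gamma_M,\mathcal{A}_M)$ as a connected sum $(\Gamma_1,\mathcal{A}_1)\#(\Gamma_S,\mathcal{A}_S)$ with $(\Gamma_S,\mathcal{A}_S)$ a torus graph of some $M(\epsilon,a,b)$ and $(\Gamma_1,\mathcal{A}_1)$ having one fewer multiple edge; iterating finishes the proof.

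\textbf{Local labels from the bigon.} Since any two edges at a common vertex of a $3$-valent manifold-with-faces skeleton lie on a unique common facet, $e_1$ and $e_2$ together bound a \emph{bigon} facet $F\subset\partial Q$. Applying condition (3c) of Proposition \ref{connection} to $f_1$ along each of $e_1$ and $e_2$ yields $\mathcal{A}(f_2)\equiv\mathcal{A}(f_1)\pmod{\mathcal{A}(e_i)}$ for $i=1,2$; linear independence of $\mathcal{A}(e_1),\mathcal{A}(e_2)$ then forces $\mathcal{A}(f_2)=\mathcal{A}(f_1)$. Setting $\alpha=\mathcal{A}(e_1),\ \beta=\mathcal{A}(e_2),\ \gamma=\mathcal{A}(f_1)=\mathcal{A}(f_2)$ (a $\Z$-basis of $\algt_{\Z}^{*}$), a further application of (3c) at $q'$ along $f_2$ produces $\mathcal{A}(g_1)=\alpha+a\gamma$ and $\mathcal{A}(g_2)=\beta+b\gamma$ for the two remaining edges $g_1,g_2$ at $q'$, with integers $a,b\in\Z$ determined by $\Gamma_M$.

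\textbf{The cut and the decomposition.} I claim that $S=\{p,q,q'\}$ is separated from $\Gamma_M\setminus S$ by exactly the three edges $f_1,g_1,g_2$: these are manifestly the only edges exiting $S$, and the connectedness of $\Gamma_M\setminus S$ follows by combining Lemma \ref{one-pt} with Lemma \ref{separate-points} and the facet-based observation that the bigon $F$ together with the two further facets meeting at $q'$ cuts $\partial Q\cong S^2$ along an embedded circle. Introducing a new vertex $A'$ adjacent to $p$ by a single edge of label $\gamma$ and to $q'$ by a multiple edge of labels $\alpha+a\gamma,\beta+b\gamma$, the resulting $4$-vertex graph $S\cup\{A'\}$ matches the pattern of Figure \ref{tg-bundles} and so is the torus graph $(\Gamma_S,\mathcal{A}_S)$ of some $M(\epsilon,a,b)$ with $\epsilon=\pm1$ by Lemma \ref{S^4-S^2(geom)}. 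Symmetrically, attaching a new vertex $A$ to $\Gamma_M\setminus S$ via edges to $p'$ and to the other endpoints of $g_1,g_2$ with labels $\gamma,\alpha+a\gamma,\beta+b\gamma$ produces a $3$-valent torus graph $\Gamma_1$ with one fewer multiple edge, and by construction $\Gamma_M=\Gamma_1\#(\Gamma_S,\mathcal{A}_S)$. Lemma \ref{con-sum canonical model} together with Corollary \ref{key2} then upgrades this to an equivariant diffeomorphism $M=M_1\# M(\epsilon,a,b)$, and induction concludes.

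\textbf{Main obstacle.} The most delicate step is verifying that the three edges $\{f_1,g_1,g_2\}$ genuinely realise a separating sphere in $\partial Q\cong S^2$ and that the residual $\Gamma_1$ is once again the one-skeleton of a $3$-disk-with-faces (not merely an abstract torus graph satisfying Proposition \ref{connection}). This requires a careful analysis of the bigon $F$ and of the two additional facets incident to $q'$, in order to exhibit an embedded circle on $\partial Q$ cutting exactly the three chosen edges, so that the combinatorial inverse connected sum lifts to a genuine equivariant connected sum on the manifold level via Lemma \ref{con-sum canonical model}. Ruling out the degenerate configuration $p'=q'$ (done via Lemma \ref{one-pt} above) is the essential subtlety that makes this lifting possible.
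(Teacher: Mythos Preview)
Your approach is essentially the paper's: locate the bigon, read off the local axial functions via the connection (Proposition~\ref{connection}), and peel off a copy of $(\Gamma_S,\mathcal{A}_S)$ by an inverse connected sum at one of the two vertices adjacent to the bigon. The paper's argument (Figure~\ref{essential discussion to prove this theorem}) is the same cut, with the roles of $p'$ and $q'$ swapped.

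There is, however, a genuine flaw in your induction. The claim that $\Gamma_1$ has ``one fewer multiple edge'' is false in general. If $q'$ happens to be adjacent to $p'$ in $\Gamma_M$ (nothing you proved rules this out; your use of Lemma~\ref{one-pt} only excludes $p'=q'$), then one of $g_1,g_2$ has terminal vertex $p'$, and your new vertex $A$ acquires a \emph{double} edge to $p'$: you have destroyed the bigon at $\{p,q\}$ but created a new one at $\{A,p'\}$, so the count is unchanged. A concrete witness with $|V(\Gamma_M)|=6$: vertices $p,q,p',q',w,v$, double edge $p\text{--}q$, single edges $pp',\,qq',\,p'q',\,p'w,\,q'v$, double edge $w\text{--}v$; after your split, $\Gamma_1$ (on $A,p',w,v$) still carries two double edges. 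The fix is easy and is what the paper does implicitly: induct on $|V(\Gamma_M)|$, which drops by $2$ at every step, terminating when $|V(\Gamma_{M'})|=4$, where Lemma~\ref{S^4-S^2(geom)} identifies $M'$ with some $M(\epsilon,a,b)$.

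Your ``Main obstacle'' paragraph also hides a gap: the connectedness of $\Gamma_M\setminus S$ does not follow from Lemmas~\ref{one-pt} and~\ref{separate-points}, which concern removal of one or two vertices, not three. The correct argument (which the paper relegates to the figure) is to exhibit the separating circle on $\partial Q\cong S^2$ directly. The two non-bigon facets $G,G'$ at $p$ both contain $f_1$ and $f_2$ in their boundaries, hence reappear at $q'$; the third facet $H$ at $q'$ contains $g_1$ and $g_2$. An arc inside each of $G,G',H$ joining the relevant pair among $\{f_1,g_1,g_2\}$ assembles into a simple closed curve meeting the one-skeleton in exactly those three edges; this curve bounds a properly embedded disk in $Q\cong D^3$, and cutting along it produces the two pieces as genuine manifolds with faces, so that Lemma~\ref{con-sum canonical model} applies.
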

\begin{proof}
Assume two vertices $p$, $q$ are connected by a multiple edge, i.e., two edges (see the bottom graph in Figure \ref{essential discussion to prove this theorem}).

\begin{figure}[h]
\begin{center}
\includegraphics[width=250pt,clip]{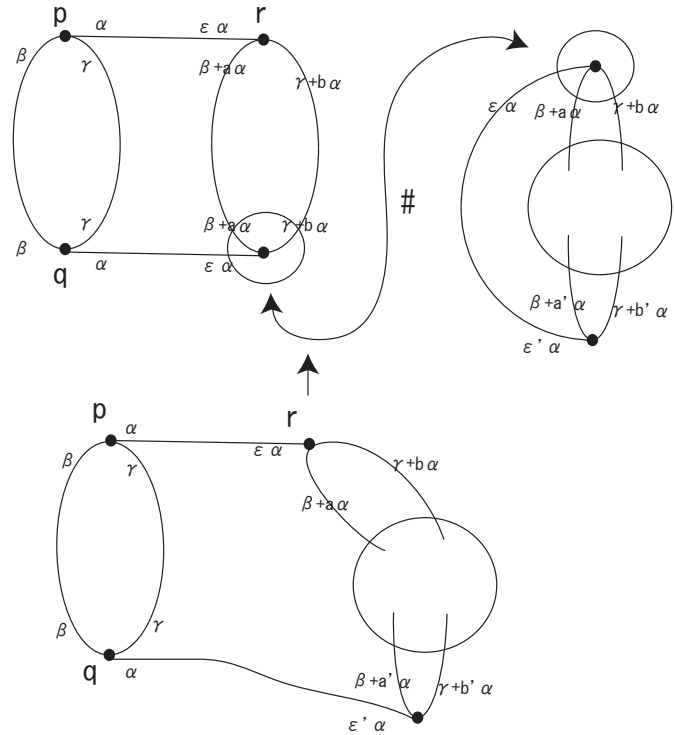}
\caption{We may regard $\alpha,\beta,\gamma$ as any generators in $(\algt_{\Z}^{3})^{*}$ and $a,a',b,b'\in \Z$ and $\epsilon,\epsilon'=\pm 1$.
Here, the bottom graph is $(\Gamma_{M},\mathcal{A}_{M})$ and the left upper graph is $(\Gamma_{S_{1}},\mathcal{A}_{S_{1}})$ 
and the right upper graph is $(\Gamma_{M'},\mathcal{A}_{M'})$.
Note that if we fix the orientation of $(\Gamma_{M},\mathcal{A}_{M})$ then the orientations of $(\Gamma_{S_{1}},\mathcal{A}_{S_{1}})$ and 
$(\Gamma_{M'},\mathcal{A}_{M'})$ are automatically determined.}
\label{essential discussion to prove this theorem}
\end{center}
\end{figure}

Then, by the connection of the torus graph (see Proposition \ref{connection}), 
it is easy to check that the axial functions around 
the vertex $r$ of the bottom graph in Figure \ref{essential discussion to prove this theorem}
satisfy the axial functions expressed in 
Figure \ref{essential discussion to prove this theorem},
where we can take $\alpha$, $\beta$, $\gamma$ as $\Z$-basis $(\algt_{\Z}^{3})^{*}$.
In this case, we can do (inverse) connected sum 
such as expressed 
 in Figure \ref{essential discussion to prove this theorem} (from the bottom to the top in Figure \ref{essential discussion to prove this theorem}).
Then, the induced torus graph $(\Gamma_{M},\mathcal{A}_{M})$ is decomposed into
two induced torus graphs 
$(\Gamma_{S_{1}},\mathcal{A}_{S_{1}})$ and $(\Gamma_{M'},\mathcal{A}_{M'})$, where $M'$ is some simply connected $6$-dimensional torus manifold with $H^{odd}(M')=0$ by Lemma \ref{lem-easy}.
Namely, we have
\[
(\Gamma_{M},\mathcal{A}_{M})= (\Gamma_{M'},\mathcal{A}_{M'})\# (\Gamma_{S_{1}},\mathcal{A}_{S_{1}}).
\]
If there is no multiple edges in $\Gamma_{M'}$, then we may put $\Gamma_{M'}=\Gamma_{X}$.
Assume that there is a multiple edge in $\Gamma_{M'}$. 
If there are only $4$ vertices in $\Gamma_{M'}$, then we may put $M'$ as $S_{2}=M(\epsilon_{2},a_{2},b_{2})$ by Lemma \ref{S^4-S^2(geom)}.
When there are more than $4$ vertices in $\Gamma_{M'}$,
with iterating the arguments as above, finally 
we establish the statement of this lemma.
\end{proof}

Therefore, to prove Theorem \ref{main-1}, it is enough to show the following lemma:
%%%%%%%%%%%%%%%%%%%%%%%%%%%%%%%%%%%%%%%%%%%%%%%%%%%%%%%%%%%%%%%%%%%%%%%%%%%
%Lemma 7.3
%%%%%%%%%%%%%%%%%%%%%%%%%%%%%%%%%%%%%%%%%%%%%%%%%%%%%%%%%%%%%%%%%%%%%%%%%%%
\begin{lemma}
\label{2-claim}
Assume that $|V(\Gamma_{M})|\ge 5$ and there are no multiple edges in $\Gamma_{M}$.
Then, $(\Gamma_{M},\mathcal{A}_{M})$ can be decomposed into the connected sum of 
the following torus graphs:
\[
(\Gamma_{M},\mathcal{A}_{M})=(\Gamma_{X_{1}},\mathcal{A}_{X_{1}})\# \cdots \# (\Gamma_{X_{k}},\mathcal{A}_{X_{k}})\#
(\Gamma_{S_{1}},\mathcal{A}_{S_{1}})\#\cdots \# (\Gamma_{S_{\ell''}},\mathcal{A}_{S_{\ell''}})
\]
 where $(\Gamma_{X_{h}},\mathcal{A}_{X_{h}})$ for $h=1,\ldots,k$ is the torus graph induced from a quasitoric manifold $X_{h}$, and $S_{i}=M(\epsilon_{i},a_{i},b_{i})$ for $i=1,\ldots,\ell''$.
\end{lemma}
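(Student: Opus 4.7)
The plan is strong induction on $|V(\Gamma_M)|$, which is even since $\Gamma_M$ is $3$-valent, so under the hypotheses $|V(\Gamma_M)|\ge 6$. The base of the induction is $|V|=4$: a simple $3$-valent graph on $4$ vertices must be $K_{4}$, whose unique torus structure is that of $\C P^{3}$ by Lemma \ref{S^4-S^2(geom)}, a quasitoric manifold. For the inductive step the key dichotomy is Lemma \ref{quasi-toru}: either $\Gamma_M$ is $3$-connected, in which case $M$ is already a quasitoric manifold and the decomposition is trivial ($k=1,\ \ell''=0$), or $\Gamma_M$ fails to be $3$-connected and a nontrivial inverse connected sum must be produced.

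Assume the latter. By Lemma \ref{one-pt} no single vertex separates $\Gamma_M$, so there exist two vertices $\{p,q\}$ whose removal splits $\Gamma_M$ into nonempty components $A\sqcup B$. By Corollary \ref{key3} the pair $\{p,q\}$ lies on a common facet $F\in\mathcal{F}(Q)$. I next extract a $3$-edge cut $\{e_{1},e_{2},e_{3}\}\subset E(\Gamma_M)$. At each of $p,q$, record how the three incident edges distribute among $A$, $B$, and the possible edge $pq$; since removing either vertex alone preserves connectedness (Corollary \ref{key3}), each of $p,q$ must have at least one edge into each of $A,B$. A short enumeration over the few possible distributions shows that among the four natural partitions of $\Gamma_{M}$ that place $\{p,q\}$ on the same side or on opposite sides, at least one produces an edge cut of size exactly $3$; select such a cut.

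Having fixed $\{e_{1},e_{2},e_{3}\}$, form $\Gamma_{1}$ and $\Gamma_{2}$ by the inverse equivariant connected sum procedure of Section \ref{sect5}: in each of the two components, introduce a new vertex $v_{i}$ joined to the three dangling half-ends, with axial functions inherited from $e_{1},e_{2},e_{3}$ and orientations $\sigma(v_{1})\neq\sigma(v_{2})$, so that $(\Gamma_M,\mathcal{A}_M)=(\Gamma_{1},\mathcal{A}_{1})\#_{(v_{1},v_{2})}(\Gamma_{2},\mathcal{A}_{2})$ by construction. The decisive condition is then \eqref{smooth-condition2} at each new vertex $v_{i}$: the three inherited axial functions must form a $\Z$-basis of $(\algt^{3}_{\Z})^{*}$. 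This is where the containment $\{p,q\}\subset F$ plays an essential role: at each of $p,q$ the two on-$F$ incident edges have axial functions annihilating $\lambda_{\mathcal{O}}(F)$, while the single off-$F$ edge pairs with $\lambda_{\mathcal{O}}(F)$ as $\pm 1$. A case-by-case check on the selected cut shows that the three inherited axial functions always consist of two independent vectors annihilating $\lambda_{\mathcal{O}}(F)$ together with one vector pairing non-trivially with it, giving a $\Z$-basis. By Corollary \ref{key2}, each $(\Gamma_{i},\mathcal{A}_{i})$ is therefore the torus graph of some simply connected $6$-dimensional torus manifold $M_{i}$ with $H^{\mathrm{odd}}(M_{i})=0$.

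Since $|V(\Gamma_{i})|<|V(\Gamma_{M})|$, the induction applies to each $\Gamma_{i}$: if $\Gamma_{i}$ remains free of multiple edges, invoke Lemma \ref{2-claim} inductively; if the inverse connected sum has created a multiple edge in $\Gamma_{i}$, which happens precisely when two of the cut edges share an endpoint on the $\Gamma_{i}$-side, apply Lemma \ref{1-claim} instead. Concatenating the resulting decompositions of $\Gamma_{1}$ and $\Gamma_{2}$ expresses $(\Gamma_M,\mathcal{A}_M)$ as a connected sum of quasitoric torus graphs and $S^{4}$-bundle torus graphs, as required. The main technical obstacle I expect is the uniform verification of the $\Z$-basis condition at $v_{i}$ across the few distributional cases in the enumeration: the hypothesis $\{p,q\}\subset F$ supplies the geometric intuition that the chosen $3$-cut realizes an embedded separating $2$-disk in $Q\cong D^{3}$, but translating this into a clean algebraic verification at every configuration requires care.
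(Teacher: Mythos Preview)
Your overall plan—induction via an inverse connected sum along a $3$-edge cut determined by a separating pair $\{p,q\}$—is the right one, and is close in spirit to the paper's argument. However, the step you yourself flag as the main obstacle really is a gap as written. The implication ``two independent vectors annihilating $\lambda_{\mathcal{O}}(F)$ together with one vector pairing non-trivially with it, giving a $\Z$-basis'' is false in general: two primitive vectors in the rank-$2$ sublattice $\ker\langle\,\cdot\,,\lambda_{\mathcal{O}}(F)\rangle$ can be $\Q$-independent without spanning it over $\Z$ (e.g.\ $(1,0)$ and $(1,2)$ in $\Z^{2}$), and ``pairing non-trivially'' is weaker than the required ``pairing $\pm 1$''. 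Your argument is fine when the two on-$F$ cut edges are incident to the \emph{same} vertex (they are then two of the three axial functions there and automatically span the kernel), but it breaks down in the case where the two arcs of $F\setminus\{p,q\}$ lie in different components, since then the two on-$F$ cut edges sit at $p$ and at $q$ separately and nothing you have said relates $\mathcal{A}(p\to a_{1})$ to $\mathcal{A}(q\to a_{3})$.

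What rescues this—and what the paper uses explicitly—is the \emph{second} common facet. In a $2$-connected plane graph a separating pair lies on two common faces $F,F'$; the paper sets up the configuration with $F$ and $F'$ sharing the edges $pr$ and $qs$ (Figure~\ref{6-vertices}), so that every cut edge is either on $F\cap F'$, on $F$ only, or on $F'$ only, and the corresponding axial function is then the dual basis vector to $\lambda_{\mathcal{O}}(H)$, $\lambda_{\mathcal{O}}(F')$, or $\lambda_{\mathcal{O}}(F)$ respectively at the relevant vertex. This forces the three inherited directions to form a $\Z$-basis. With that correction your two-piece decomposition works; the paper instead performs a three-piece split $(\widetilde{\Gamma}_{1})\#(\Gamma_{S})\#(\widetilde{\Gamma}_{2})$ and then argues separately that singular facets with $|V(F)|\le 5$ cannot persist. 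One further small point: Corollary~\ref{key2} is an equivalence-of-manifolds statement, not an existence statement, so to conclude that each $(\Gamma_{i},\mathcal{A}_{i})$ is realized you should argue at the level of the orbit space (the $3$-cut bounds a properly embedded disk in $Q\cong D^{3}$, and coning off each side yields a manifold with faces with a well-defined characteristic function precisely because of the $\Z$-basis condition).
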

\begin{proof}
If $\Gamma_{M}(=\Gamma)$ is $3$-connected, then it follows from Lemma \ref{quasi-toru} that the statement holds, i.e., $k=1,\ell''=0$.
Therefore, we may assume $\Gamma$ is not $3$-connected.
In this case,   
by Corollary \ref{key3}, there is a $2$-valent torus subgraph $F\subset \Gamma$ such that 
some $p,q\in V(F)$ satisfy that $\Gamma\setminus \{p,q\}$ is not connected.

If $F$ is a triangle (i.e., $|V(F)|= 3$), 
with the method similar to that demonstrated in the proof of Lemma \ref{one-pt}, 
we have that there is a face in $Q$ which has the self-intersection edge.
This gives a contradiction to that $Q$ is a manifold with faces.
Therefore, we may assume $|V(F)|\ge 4$.
We first assume that $pq$ is an edge of $F$.
Then, there are two graphs $\Gamma_{1}$ and $\Gamma_{2}$ which are the connected components of $\Gamma\setminus \{p,q\}$ expressed  
in Figure \ref{edge-con}.
\begin{figure}[h]
\begin{center}
\includegraphics[width=70pt,clip]{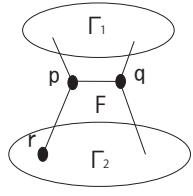}
\caption{If we remove $r$ and $q$ from $\Gamma$ instead of $p$ and $q$, the graph is also disconnected.}
\label{edge-con}
\end{center}
\end{figure}
If we remove the two vertices $r$ and $q$ from $\Gamma$ instead of $p$, $q$, where $r\in V(\Gamma_{2})$ such that $pr$ is an edge, 
then $\Gamma\setminus \{r,q\}$ is also not connected (see Figure \ref{edge-con}). 
Therefore, from now, we may assume the followings:
\begin{enumerate}
\item $p,q\in V(\Gamma)$ satisfy $\Gamma\setminus \{p,q\}$ is not connected;
\item $pq\not\in E(\Gamma)$;
\item there is a $2$-valent torus subgraph (facet) $F$ with $|V(F)|\ge 4$ in $\Gamma$ such that $p,q\in V(F)$.
\end{enumerate}
We call such facet $F$ a {\it singular facet}. 

Let $F$ be a singular facet.
Assume $|V(F)|\ge 6$.
\begin{figure}[h]
\begin{center}
\includegraphics[width=120pt,clip]{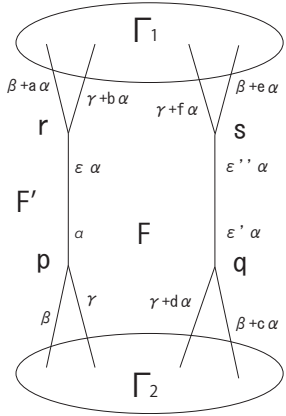}
\caption{The axial functions around $F$ when $|V(F)|\ge 6$, where $\epsilon,\ \epsilon',\ \epsilon''=\pm 1$ and $a,b,c,d,e,f\in \Z$.
Here, $F'$ is a facet which intersects with $F$ on $pr$ and $qs$.}
\label{6-vertices}
\end{center}
\end{figure}
In this case, by the similar argument just before, we may take $p,\ q$ as in the position of Figure \ref{6-vertices}, i.e.,
$p$ and $q$ are on two separated edges $rp$ and $sq$ which are common edges of two facets $F$ and $F'$ in Figure \ref{6-vertices}
(Note that $r$ and $s$ might be connected by an edge).
Moreover, by considering the omnioriented characteristic functions of the facets $F$ and $F'$, 
we may take the axial functions around the facet $F$ as in Figure \ref{6-vertices}.

By taking an appropriate orientation, we can do the connected sum as in Figure \ref{6-vertices2}; here
we denote the (oriented) torus graph containing $\Gamma_{1}$ as $(\widetilde{\Gamma}_{1},\widetilde{\mathcal{A}}_{1})$ and 
that containing $\Gamma_{2}$ as $(\widetilde{\Gamma}'_{2},\widetilde{\mathcal{A}}'_{2})$.
\begin{figure}[h]
\begin{center}
\includegraphics[width=120pt,clip]{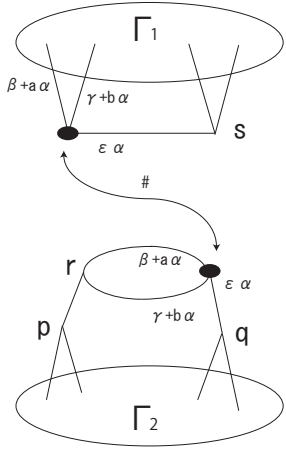}
\caption{The torus graph $(\Gamma,\mathcal{A})$ in Figure \ref{6-vertices} splits into two torus graphs $(\widetilde{\Gamma}_{1},\widetilde{\mathcal{A}}_{1})$ (upper) and $(\widetilde{\Gamma}'_{2},\widetilde{\mathcal{A}}'_{2})$ (lower).
Here, we omit the axial functions around the vertices $p,q,r,s$ because they are exactly same with that of Figure \ref{6-vertices}.}
\label{6-vertices2}
\end{center}
\end{figure}

The torus graph obtained by this connected sum is nothing but the torus graph $(\Gamma,\mathcal{A})$ in Figure \ref{6-vertices}.
Note that $\widetilde{\Gamma}_{1}$ is simple and planner, and $\widetilde{\Gamma}'_{2}$ is just planner.
With the method similar to that demonstrated in Figure \ref{essential discussion to prove this theorem}, 
$(\widetilde{\Gamma}'_{2},\widetilde{\mathcal{A}}'_{2})$ can be obtained from the connected sum of $(\Gamma_{S},\mathcal{A}_{S})$
and the simple, planner graph $(\widetilde{\Gamma}_{2},\widetilde{\mathcal{A}}_{2})$ (containing $\Gamma_{2}$), where 
$(\Gamma_{S},\mathcal{A}_{S})$ is one of the torus graphs (by taking the appropriate axial functions) in Figure \ref{tg-bundles}.
Namely, the torus graph in Figure \ref{6-vertices} can be obtained from the following connected sum:
\[
(\Gamma,\mathcal{A})=(\widetilde{\Gamma}_{1},\widetilde{\mathcal{A}}_{1})\# (\Gamma_{S},\mathcal{A}_{S})\# (\widetilde{\Gamma}_{2},\widetilde{\mathcal{A}}_{2}).
\]
Here, it is easy to check that 
$\widetilde{\Gamma}_{i}$ consists of $\Gamma_{i}$ and the other two facets, say $\widetilde{F}(i)$ and $\widetilde{F}'(i)$ (induced from $F$ and $F'$ in $\Gamma$).
Because of Figure \ref{6-vertices2}, the number of vertices of other two facets $\widetilde{F}(i)$ and $\widetilde{F}'(i)$ are reduced; in particular, 
the number of vertices of the facet $\widetilde{F}(i)$ induced from the singular facet $F$ is strictly less than $6$.
If both of $(\widetilde{\Gamma}_{1},\widetilde{\mathcal{A}}_{1})$ and $(\widetilde{\Gamma}_{2},\widetilde{\mathcal{A}}_{2})$ are $3$-connected, then
these torus graphs are induced from quasitoric manifolds, i.e, 
the statements of Lemma \ref{2-claim} hold.
Assume that $(\widetilde{\Gamma}_{1},\widetilde{\mathcal{A}}_{1})$ is not $3$-connected.
Then, by the arguments before, there is a singular facet $F$ in $(\widetilde{\Gamma}_{1},\widetilde{\mathcal{A}}_{1})$.
If $|V(F)|\ge 6$, then $(\widetilde{\Gamma}_{1},\widetilde{\mathcal{A}}_{1})$ also decomposes into 
\[
(\widetilde{\Gamma}_{1},\widetilde{\mathcal{A}}_{1})=(\widetilde{\Gamma}_{3},\widetilde{\mathcal{A}}_{3})\# (\Gamma_{S'},\mathcal{A}_{S'})\# (\widetilde{\Gamma}_{4},\widetilde{\mathcal{A}}_{4}),
\]
by using the similar arguments explained in Figure \ref{6-vertices2}. 
Iterating this arguments, we may reduce all singular facets with $|V(F)|\ge 6$.
More precisely, 
we may decompose $(\Gamma,\mathcal{A})$ in Figure \ref{6-vertices} into
\[
(\Gamma,\mathcal{A})=
\#_{i=1}^{\ell}\left\{
(\Gamma_{i},\mathcal{A}_{i})\# (\Gamma_{S_{i}},\mathcal{A}_{S_{i}})\# (\Gamma_{i+\ell},\mathcal{A}_{i+\ell})
\right\},
\]
where $(\Gamma_{S_{i}},\mathcal{A}_{S_{i}})$, for $i=1,\ldots,\ell$, is a torus graph in Figure \ref{tg-bundles} and 
$(\Gamma_{h},\mathcal{A}_{h})$, for $h=1,\ldots, 2\ell$, is a $3$-valent simple and planner torus graph which satisfies one of the followings:
\begin{itemize}
\item $3$-connected (in this case, $(\Gamma_{h},\mathcal{A}_{h})$ is induced from a quasitoric manifold);
\item otherwise, all singular facets $F$ satisfy $|V(F)|=4$ or $5$.
\end{itemize}

Assume that the number of vertices in every singular facet of the torus graph $(\Gamma,\mathcal{A})$ is less than or equal to $5$.
Then, such torus graph is one of torus graphs expressed in Figure \ref{45-vertices}.
\begin{figure}[h]
\begin{center}
\includegraphics[width=200pt,clip]{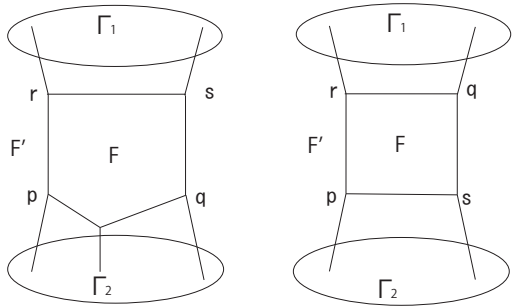}
\caption{The singular facets $F$ with $|V(F)|=5$ or $4$.
Here, $F'$ is a facet which intersects with $F$ on $pr$ and $qs$.}
\label{45-vertices}
\end{center}
\end{figure}
However, because $\Gamma$ is the one-skeleton of a manifold with faces and not $3$-connected, it is easy to check that there exists the singular facet $F'$ 
such that $F'\cap F=\{pr,qs\}$ and $|V(F')|\ge 6$.
This gives a contradiction.
Hence, this case does not occur.
This establishes Lemma \ref{2-claim}.
\end{proof}

Consequently, 
by Lemma \ref{con-sum canonical model}, \ref{1-claim} and \ref{2-claim}, 
we have the statement of Theorem \ref{main-1}

Finally, 
by Theorem \ref{main-1} and the Mayer-Vietoris exact sequence, 
we also have the following well-known result.
%%%%%%%%%%%%%%%%%%%%%%%%%%%%%%%%%%%%%%%%%%%%%%%%%%%%%%%%%%%%%%%%%%%%%%%%%%%
%Corollary
%%%%%%%%%%%%%%%%%%%%%%%%%%%%%%%%%%%%%%%%%%%%%%%%%%%%%%%%%%%%%%%%%%%%%%%%%%%
\begin{corollary}
\label{2nd deg}
Let $M$ be a simply connected $6$-dimensional torus manifold whose cohomology ring is generated by the $2$nd degree cohomology.
Then, $M$ is a quasitoric manifold.
\end{corollary}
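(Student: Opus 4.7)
The plan is to combine Theorem \ref{main-1} with the Mayer-Vietoris sequence to eliminate all summands other than quasitoric manifolds. First, since $H^{*}(M;\Z)$ is generated as a ring by $H^{2}(M;\Z)$, every odd-degree class is a polynomial in degree-$2$ classes and hence vanishes; thus $H^{\mathrm{odd}}(M)=0$, and Theorem \ref{main-1} applies. The task reduces to ruling out $M\cong S^{6}$, $M\cong M(\epsilon,a,b)$, and any nontrivial connected sum containing a bundle factor $S_{i}=M(\epsilon_{i},a_{i},b_{i})$, which leaves case (2) of the theorem.

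For the pure cases, direct cohomology computations suffice. $S^{6}$ fails immediately because $H^{2}(S^{6})=0$ while $H^{6}(S^{6})\cong\Z$. For $N=M(\epsilon,a,b)$, the Serre spectral sequence of the $S^{4}$-bundle over $S^{2}$ collapses at $E_{2}$ (the only candidate differential $d_{5}:H^{4}(S^{4})\to H^{5}(S^{2})$ lands in zero), so $H^{*}(N)\cong H^{*}(S^{2})\otimes H^{*}(S^{4})$ additively. The pulled-back base class $\tilde u\in H^{2}(N)$ satisfies $\tilde u^{2}=0$, being the pullback of $u^{2}\in H^{4}(S^{2})=0$; therefore the image of the cup-square $H^{2}(N)\otimes H^{2}(N)\to H^{4}(N)$ vanishes, while $H^{4}(N)\cong\Z$ is generated by the fiber class.

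Next I would handle the general connected sum multiplicatively via Mayer-Vietoris applied to $M_{1}\# M_{2}=(M_{1}\setminus D^{6})\cup(M_{2}\setminus D^{6})$ along the neck $S^{5}\times I$. This yields
\[
H^{i}(M_{1}\# M_{2})\cong H^{i}(M_{1})\oplus H^{i}(M_{2})\quad\text{for } 1\le i\le 5,
\]
and, by naturality of the cup product, a product $a_{1}\cdot a_{2}$ with $a_{j}\in H^{2}(M_{j})$ restricts to the neck, where $H^{4}(S^{5}\times I)=0$; hence such cross-products vanish. Induction on the number of summands then shows that for $M=(\#_{h=1}^{k}X_{h})\#(\#_{i=1}^{\ell}S_{i})$ the image of $H^{2}(M)^{\otimes 2}\to H^{4}(M)$ decomposes as $\bigoplus_{h}(H^{2}(X_{h}))^{2}\oplus\bigoplus_{i}(H^{2}(S_{i}))^{2}$. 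Since $(H^{2}(S_{i}))^{2}=0$ while $H^{4}(S_{i})\cong\Z$, any single bundle summand obstructs $H^{4}(M)$ from being generated by $H^{2}(M)$, forcing $\ell=0$.

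Consequently $M$ is an equivariant connected sum of $6$-dimensional quasitoric manifolds at fixed points, which is again quasitoric: on orbit spaces the operation is the vertex-connected sum of two simple $3$-polytopes, which remains a simple $3$-polytope, so the conclusion follows from Lemma \ref{quasi-toru}. The main obstacle I anticipate is keeping the multiplicative bookkeeping clean across a many-factor connected sum; I would handle it by peeling off one building block at a time in the induction, so that each step reduces to the two-factor Mayer-Vietoris computation above.
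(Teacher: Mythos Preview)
Your argument is correct and follows precisely the route the paper indicates (the paper's own proof is the single sentence ``by Theorem~\ref{main-1} and the Mayer--Vietoris exact sequence''), so you have supplied the details the paper omits. One small redundancy: in Theorem~\ref{main-1} the connected-sum alternative already requires $\ell\ge 1$, so once your Mayer--Vietoris computation forces $\ell=0$ you land directly in case~(2) and need not argue separately that a connected sum of quasitoric manifolds is quasitoric; that final paragraph is harmless but unnecessary.
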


\section*{Acknowledgments}
The author would like to thank 
Professors Yael Karshon and Takashi Tsuboi for providing him excellent circumstances to do research.

% BibTeX users please use one of
%\bibliographystyle{spbasic}      % basic style, author-year citations
%\bibliographystyle{spmpsci}      % mathematics and physical sciences
%\bibliographystyle{spphys}       % APS-like style for physics
%\bibliography{}   % name your BibTeX data base

% Non-BibTeX users please use

\end{document}